\documentclass[dvipsnames,letterpaper,11pt]{article}
\usepackage[margin=1in]{geometry}
\usepackage[utf8]{inputenc}
\usepackage{graphicx,fullpage,paralist}
\usepackage{amsmath,amssymb,amsthm}
\usepackage{dsfont}
\usepackage{comment,hyperref}
\usepackage{booktabs}
\usepackage{subcaption}
\usepackage[font={footnotesize}]{caption}
\usepackage{cleveref}
\usepackage{mathtools}
\usepackage{thmtools}
\usepackage{xstring}
\usepackage{enumitem}
\setlist[itemize]{nosep}
\setlist[enumerate]{nosep}
\usepackage{blkarray}
\usepackage[normalem]{ulem}
\usepackage{csquotes}
\usepackage{stmaryrd}
\usepackage[longnamesfirst]{natbib}
\usepackage{thm-restate}
\usepackage{tikz}
\usetikzlibrary{positioning,quotes}
\usepackage{xspace}
\usepackage{doi}

\newcommand{\PP}{\mathbb{P}}
\newcommand{\EE}{\mathbb{E}}
\newcommand{\RR}{\mathbb{R}}
\newcommand{\NN}{\mathbb{N}}

\newcommand{\calP}{\mathcal{P}}

\newcommand{\stay}{\mathrm{S}}
\newcommand{\move}{\mathrm{M}}
\newcommand{\meet}{\mathrm{R}}
\newcommand{\first}{\varphi}

\newcommand{\AW}{\mathrm{AW}}
\newcommand{\SAW}{\mathrm{SAW}}
\newcommand{\CFKt}{First-Same-or-Different\xspace}
\newcommand{\CFK}{\mathrm{FSD}}
\newcommand{\SCFK}{\mathrm{SFSD}}
\newcommand{\Uni}{\textsc{Uni}}
\newcommand{\Vis}{\textsc{Cli}}

\newtheorem{lemma}{Lemma}
\newtheorem{corollary}{Corollary}

\usepackage[textsize=tiny,textwidth=2cm]{todonotes}

\crefformat{equation}{(#2#1#3)}

\newcommand{\ie}{i.e.,\ }

\usepackage{multirow}
\usepackage[ruled]{algorithm2e}
\usepackage{xcolor}

\newlength{\algofontsize}
\hypersetup{
	colorlinks,
	linkcolor={red!50!black},
	citecolor={blue!50!black},
    urlcolor={black}
}

\def\equationautorefname~#1\null{(#1)\null}

\usepackage{etoolbox}
\makeatletter
\patchcmd{\hyper@makecurrent}{%
    \ifx\Hy@param\Hy@chapterstring
        \let\Hy@param\Hy@chapapp
    \fi
}{%
    \iftoggle{inappendix}{%
        \@checkappendixparam{chapter}%
        \@checkappendixparam{section}%
        \@checkappendixparam{subsection}%
        \@checkappendixparam{subsubsection}%
        \@checkappendixparam{paragraph}%
        \@checkappendixparam{subparagraph}%
    }{}%
}{}{\errmessage{failed to patch}}

\newcommand*{\@checkappendixparam}[1]{%
    \def\@checkappendixparamtmp{#1}%
    \ifx\Hy@param\@checkappendixparamtmp
        \let\Hy@param\Hy@appendixstring
    \fi
}
\makeatletter

\newtoggle{inappendix}
\togglefalse{inappendix}

\apptocmd{\appendix}{\toggletrue{inappendix}}{}{\errmessage{failed to patch}}

\begin{document}
	
\title{Faster Symmetric Rendezvous on Four or More Locations}

\author{Javier Cembrano\thanks{Department of Industrial Engineering, Universidad de Chile.}
\and Felix Fischer\thanks{School of Mathematical Sciences, Queen Mary University of London.}		
\and Max Klimm\thanks{Institute for Mathematics, Technische Universität Berlin.}
}

\date{}

\maketitle

\begin{abstract}
In the symmetric rendezvous problem, two players follow the same (randomized) strategy to visit one of~$n$ locations in each time step~$t=0,1,2,\dots$. Their goal is to minimize the expected time until they visit the same location and thus meet. 
A canonical strategy due to Anderson and Weber is known to be optimal for $n=2$ and $n=3$, but whether it remains optimal for larger values of $n$ has been an open question since 1990. We show that it does not remain optimal: for any finite $n\geq 4$, we construct an explicit symmetric strategy that achieves a strictly smaller expected meeting time than the Anderson--Weber strategy.

In the Anderson--Weber strategy players stay at a dedicated home location for $n-1$ steps with a certain probability $\theta$ and with the remaining probability tour all non-home locations in a random order. Our improving strategy introduces carefully chosen correlations between consecutive tours of the non-home locations. The construction is uniform in $n$ and is guided by a graph-theoretic view in which tours correspond to permutations and meetings to edges in the complement of the derangement graph. By exploiting the clique structure of this graph we obtain a correlated strategy that improves on the Anderson--Weber strategy. For $n=4$, we give an exact expression for the improvement; for any $n\geq 5$, we obtain a lower bound on the expected improvement of 
$\frac{483(1-\theta)^6}{(n-1)^8}$, where $\theta \in [0,1)$ is the probability of staying at the home location. The graph-theoretic framework we introduce may be useful more widely in the design and analysis of correlated strategies for rendezvous.
\end{abstract}

\newpage

\setcounter{page}{1}

\section{Introduction}

In 1976, Steve Alpern gave a talk at the Vienna Institute for Advanced Studies where he proposed the following innocent-looking problem~\citep[cf.][]{Alpern02}:
\begin{quote}
In each of two rooms, there are~$n$ telephones randomly strewn about. They are connected in a pairwise fashion by~$n$ wires. At discrete times~$t=0,1,2,\dots$ players in each room pick up a phone and say ‘hello.’ They wish to minimize the time~$t$ when they first pick up paired phones and can communicate.
\end{quote}

This problem, which can alternatively be thought of in terms of two individuals trying to meet in one of~$n$ physical locations such as rooms in a house or shops in a mall, has become known as the rendezvous problem on discrete locations. 
Since there is nothing in the description of the problem that distinguishes the telephones or locations, it is common to assume that they are given a labeling for each player that is chosen uniformly at random and independently from the labeling used by the other player. An optimal strategy then is one that minimizes the expected meeting time, where the expectation is taken over the random labelings and any randomness in players' strategies. 
Whether a distinction can be made between the players leads to two natural versions of the problem. The asymmetric version, where players are allowed to use distinct strategies, is straightforward. In the optimal pair of strategies, called wait-for-mommy, one player stays in the same location throughout while the other tours all locations in random order. This leads to an expected meeting time of~$(n-1)/2$. 
In the symmetric version, both players must use the same strategy. A strategy that like wait-for-mommy assigns distinct roles to the players hence cannot be used, and any deterministic strategy will with significant probability chase itself forever and therefore has an infinite expected meeting time. 

It is natural to exploit what we know about the asymmetric problem, while using randomness to break symmetry. 
As our objective will be to prove upper bounds on the optimal meeting time, we may assume that locations at time~$0$ are chosen uniformly at random; we will call the location a player visits at time~$0$ her \emph{home location}. We can then focus on strategies that start at time~$1$ and have access to a home location guaranteed to be distinct from the home location of the other player.
\citet{AW90} proposed a strategy, now called the Anderson--Weber strategy or $\AW$ for short, in which players repeatedly choose between waiting and touring: with some probability~$\theta$ a player stays in her home location for~$n-1$ steps, while with probability~$(1-\theta)$ she tours the remaining~$n-1$ locations in random order. For~$n=2$ and~$\theta=1/2$, this strategy just chooses a random location in each step, and it is not difficult to show that this is optimal~\citep{AW90}. \citeauthor{AW90} also claimed that their strategy is optimal for~$n=3$, but their proof turned out to be flawed. Optimality for~$n=3$ was finally shown by \citet{Weber12}. The difficult proof proceeds by showing that the matrix of meeting times for an easier problem, truncated after~$k$ steps for any~$k$, can be rounded down to a matrix which is positive semidefinite and for which $\AW$ is optimal.

The simple description of the symmetric rendezvous problem belies significant difficulty, which not only concerns the search for good strategies but also some very basic properties.
\citeauthor{AW90} conjectured for example that the optimal meeting time must increase with the number of locations, as it does in the asymmetric version, but this conjecture is still open.
It also seems clear that the symmetric version is more difficult than the asymmetric one, but this was only shown very recently and only for a difference in meeting times of~$2^{-36}$~\citep{BPSW23a}.
Better lower bounds on the meeting time only exist for~$n\leq 5$, where computational techniques can be used~\citep{Fan09}, and for~$n\to\infty$~\citep{DHMR16}.
Optimal strategies for $n\geq 4$, and the conjecture of \citeauthor{AW90} that $\AW$ is optimal for~$n\to\infty$, currently seem out of reach.

\subsection{Our Contribution}

We give a strategy, which we call \CFKt and abbreviate~$\CFK$, that has a smaller expected meeting time than $\AW$ for any~$n\geq 4$. 
\citet{AW90} had conjectured, albeit without justification, that an improvement would be possible for~$n=4$. \citet{Fan09}, on the other hand, conjectured optimality of $\AW$ for all~$n$ and based this on computational evidence. An unpublished manuscript of \citet{Weber09} finally gave concrete evidence that an improvement is possible for $n=4$.  \citeauthor{Weber09} modifies $\AW$ over blocks of~$12$ consecutive steps, corresponding to four blocks of~$n-1=3$ steps each in which a player tours; in the first two blocks locations are visited in random order as in $\AW$, but orders in the latter two blocks are correlated. The particular strategy was derived from a negative eigenvalue of a matrix of meeting times, using intuition gained for~$n=3$ on the relationship between positive semidefiniteness and optimality of $\AW$.

\citeauthor{Weber09} suggests that the improvement over $\AW$ can be seen most easily by calculating the meeting time for~$1585^2$ pairs of sequences of locations, and multiplying it with the probability that a particular pair occurs. These calculations are not made explicit but are feasible with the help of a computer. When it comes to the suboptimality of $\AW$ for~$n=4$ this argument is convincing enough, but it does not appear to generalize to larger values of~$n$. Computing the meeting time explicitly for an equally complicated strategy when~$n>4$ is out of the question. More importantly, the computations underlying \citeauthor{Weber09}'s choice of improving strategy also become infeasible for~$n>4$, leaving us without a candidate strategy.

In $\AW$, and $\AW$-like strategies like that of \citeauthor{Weber09}, players choose a permutation in each round and meet if the two permutations have at least one fixed point, \ie if they are \emph{not} derangements.\footnote{Here and in the following we call two permutations derangements if they do not share a position with the same entry. A single permutation is called a derangement if it is a derangement of the identity.} It is therefore a bit surprising that research on the rendezvous problem has not used what is known about derangements beyond the classic estimate that a random permutation is a derangement with probability approaching~$1/e$ as~$n\to\infty$. We will use intuition, and somewhat deeper results, on the combinatorics of derangements to construct and analyze a strategy that improves over $\AW$. 
Like \citeauthor{Weber09}, we take $\AW$ as a starting point, operate in rounds of length~$n-1$ 
equal either to the player's home location or to a permutation of all locations except the home location, and introduce correlation among consecutive permutations. Specifically, we consider permutations in groups of three and require that their respective first elements are either all the same or all different; subject to this constraint, permutations are chosen uniformly at random. This is well-defined for all $n\geq 4$, and we will see that it leads to an improvement over $\AW$ for all of these values. 
Denoting Anderson--Weber's and our strategy for a specific value of the parameter $\theta$ by $\AW(\theta)$ and $\CFK(\theta)$, respectively, our main result is the following.
\newtheorem*{mresult}{\Cref{thm:meeting-time}~(informal)}
\begin{mresult}
    Let~$n\in \NN$ and~$\theta\in [0,1)$. The expected meeting time of~$\CFK(\theta)$ is smaller than the expected meeting time of~$\AW(\theta)$ by~$\frac{8(1-\theta)^6}{81(8-(3\theta^2-2\theta+1)^3)}$ if~$n=4$, and by at least~$\frac{483(1-\theta)^6}{(n-1)^8}$ if~$n\geq 5$.
\end{mresult}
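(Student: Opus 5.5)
Both strategies share the round structure of $\AW$: after time~$0$, time is split into rounds of $n-1$ steps, and in each round a player independently stays home (probability~$\theta$) or tours the $n-1$ non-home locations (probability $1-\theta$). The two strategies differ only in how tours are ordered, and only jointly across triples of touring rounds. So I would set up a renewal argument over blocks of three touring rounds of a fixed player.

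First I would make the state precise: each player's $k$-th tour is labelled by its index modulo~$3$. In $\CFK$, tours $3j+1,3j+2,3j+3$ of a player form a triple whose first elements are all equal or all distinct, uniform otherwise; triples are independent. The two players' tour counters are not synchronised, because the numbers of stay-rounds are independent. Hence the expected meeting time is
\begin{equation*}
\EE[T]=\sum_{r\ge0}\PP[\text{no meeting in rounds }1,\dots,r]\cdot\EE[\text{steps used in round } r+1\mid\cdot\,]\text{-type terms}.
\end{equation*}
I would write it instead as $\sum_t \PP[T>t]$ and compare with $\AW$ round by round. Within one round, the meeting pattern is as follows.
\begin{itemize}
\item If both players stay, they never meet, since their homes differ.
\item If one stays and one tours, they meet at the tourer's visit to the stayer's home. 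Its position within the tour is uniform in $\CFK$ as well, by the marginal symmetry of each tour.
\item If both tour, they meet exactly when the two permutations (over a common relabelling) are not derangements of each other.
\end{itemize}
The key marginal fact is that each single tour in $\CFK$ is uniformly distributed. So the only difference from $\AW$ is correlation: conditioning on no meeting in earlier both-tour rounds changes the conditional law of later tours.

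Second, I would isolate this correlation through the derangement graph. I would represent a round by a pair of permutations and view "no meeting" as an edge of the derangement graph. The fixed-first-element classes are cliques of the complement graph, since permutations sharing a first element always meet. I would then compute, for two rounds in which both players tour, how the correlation shifts probability mass. Given that the players did not meet in one round, their next tours have first elements more likely to be equal, because a player's tours in a triple share first elements, or are all distinct. This raises the probability of an early meeting at position~$1$ and makes the tail probability $\PP[T>t]$ smaller.

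Concretely, I would condition on the sequence of stay/tour choices of both players, which is identical in law under both strategies. Within a maximal stretch of common touring rounds, I would compute the non-meeting probabilities under $\CFK$ as products of transfer matrices indexed by the triple offsets $(a,b)\in\{0,1,2\}^2$. The $\AW$ value is the special case $(D_n/n!)$-type constants with uniform independence, using counts of derangements with prescribed first entries. For $n=4$ I would evaluate these sums exactly. The geometric-series denominator $8-(3\theta^2-2\theta+1)^3$ matches summing over cycles of three rounds: $3\theta^2-2\theta+1=\theta^2+(1-\theta)^2+\dots$ is the per-round "no meet" factor, and a triple closes the cycle. Matching the closed form is a verification step.

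For $n\ge5$ I would not compute exactly. I would lower-bound the gain by keeping only the leading configuration: both players tour in three consecutive rounds aligned to a triple, contributing $(1-\theta)^6$. I would show that all other configurations give nonnegative improvement, or bound them away using positivity of the transfer-matrix differences. I would then estimate the derangement counts with fixed first entries using $D_m=m!/e+O(1)$-style inequalities, which yields the polynomial factor $(n-1)^{-8}$ with constant~$483$. The main obstacle is this sign control: proving that correlation never hurts in misaligned configurations, where the two players' triples are offset. There I would first try to show that each offset contributes a quantity expressible as a variance of first-element indicators, and is therefore nonnegative.
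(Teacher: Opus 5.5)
\textbf{The proposal analyzes a different strategy from the one in the statement.} In the paper, $\CFK(\theta)$ is defined by \emph{absolute} round index. Only rounds $r$ with $r/3\in\NN$ are modified, and only if the player toured in rounds $r-2$, $r-1$ and $r$; every other tour is uniform and independent. So both players' blocks $\{3j+1,3j+2,3j+3\}$ are aligned in time, and the process restarts every $3(n-1)$ steps. This gives the exact renewal identity $\EE[t_\sigma]=\EE[t_\sigma\mid\meet^{\le3}_\sigma]+3(n-1)\bigl(1/\PP[\meet^{\le3}_\sigma]-1\bigr)$, with no offsets and no transfer matrices.

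What you describe, with triples formed by a player's $(3j+1)$st to $(3j+3)$rd tours, is the ``every third moving round'' refinement. The paper mentions it in its conclusion but deliberately does not analyze it, and the stated constants do not hold for it. For $n=4$ one can check that your variant gains $\frac{8}{81}\cdot\frac{a^3}{1-a^3}$ with $a=\frac{1-\theta}{2(1+\theta)}$. This is strictly larger than $\frac{8(1-\theta)^6}{81(8-(3\theta^2-2\theta+1)^3)}$ for every $\theta\in(0,1)$, so your ``verification step'' would fail.

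The misalignment you single out as the main obstacle does not arise even under your reading. A round in which exactly one player tours always ends in a meeting. So, conditional on not having met, both players have toured in exactly the same rounds and their counters are synchronized.

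\textbf{Even for the correct strategy, the quantitative core is missing.} The paper needs three ingredients:
\begin{enumerate}
\item An exact expression for $\PP[\bar{\meet}^{\le3}_{\CFK(\theta)}\mid\chi^1=\chi^2=\move^3]$. It is obtained by conditioning on the first-location patterns of the three tours, with everything written in terms of shifted-derangement fractions $\hat d^0_{n-2},\hat d^1_{n-2},\hat d^2_{n-2}$. Compared against $(\hat d^1_{n-1})^3$ using sharp estimates on $d^k_n$, this gives a gain of exactly $0$ for $n=4$ and at least $967/(n-1)^9$ for $n\ge5$.
\item A strictly smaller conditional expected meeting time under $\CFK$, given that both tour in all three rounds and first meet in round three (by exactly $8/81$ when $n=4$). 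This comes from a conditional bias of $\CFK$ when round three starts: weight moves from ``exactly one player starts at the other's home'' toward ``both start at each other's homes'' and toward a common first location. It is computed via the expected index of the first fixed point between shifted permutations.
\item A combination step showing that $\EE[t\mid\meet^3]$ does not increase. It uses that $\CFK$ shifts conditional weight toward the all-touring configuration, whose within-round expected time $\first_1(n-1)$ is below the $n/2$ of a one-tourer round. Higher meeting probability alone does not imply a smaller expected time.
\end{enumerate}
For $n=4$ there is no gain in meeting probability at all, so the entire improvement is this within-round timing effect, and you would have to quantify it. The constant $483$ is $\lfloor 967/2\rfloor$, coming from $\frac{n-1}{2}\bigl(\frac{1}{\PP[\meet^{\le3}_{\AW(\theta)}]}-\frac{1}{\PP[\meet^{\le3}_{\CFK(\theta)}]}\bigr)$, not directly from derangement estimates. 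Your plan asserts that the other configurations contribute nonnegatively and that these constants will come out, but gives no argument for either.
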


For the values of~$\theta$ that are optimal for~$\AW(\theta)$, we obtain an improvement of~$0.00125$ for~$n=4$ and of~$0.00086$ for~$n=5$.
This yields respective improvements of~$0.000938$ and~$0.00069$ compared to the expected meeting times of $2.5685$ and $3.3793$ of $\AW$ for the original problem where players meet at time $0$ with probability $1/n$.\footnote{These values differ by~$1$ from those given by \citet{AW90}, who count time steps starting from~$1$.}

To see why the type of correlation we use might be helpful, we can view round-based strategies like $\AW$ as being played on a graph, where vertices represent permutations and two vertices are connected by an edge if they are \emph{not} derangements.
This is the complement of the so-called \emph{derangement graph} studied in combinatorics \citep[e.g.][]{MeagherRS21,Renteln07,Rasmussen94}.
When a player tours the other locations, $\AW$ ignores the structure of the graph and repeatedly chooses a vertex uniformly at random. Our strategy chooses three vertices that either belong to a large clique or to three distinct large cliques; the choice between these two options is made randomly with a probability that depends only on~$n$. This can be seen as replicating on cliques of permutations what $\AW$ does on locations, and one may hope that it improves over $\AW$ in the same way in which $\AW$ improves over a strategy that in each step chooses a random location. Whether it actually provides an improvement depends on the clique structure, and showing that it does so for our particular choice of strategy requires a fairly intricate analysis. The analysis is complicated further by the fact that the permutations used by the two players are not of the same set of locations but of sets that differ by one element, owing to the fact that players do not visit their home location when moving. Moreover, for the expected meeting time, it not only matters whether two permutations have a fixed point but also where in the permutations the first fixed point occurs.

\citet{Fan09} and \citet{Alpern13} contemplate that a good symmetric strategy for rendezvous could be published in a survival guide that one could consult if one needed to meet in an unknown environment. While its analysis is fairly involved, our strategy is simple enough to be published in such a guide. The only change compared to~$\AW$ affects rounds divisible by three, when we have moved for the previous two rounds and decide to move again; if in the previous two rounds we started in the same location, we again start in that location; if we started in distinct locations, we start in a location that is distinct from both; if the tour we have sampled for the current round does not satisfy this criterion, we sample it again until it does.

\subsection{Structure of the Paper}

Our analysis of~$\CFK$ relies heavily on the following intermediate result, stated as \Cref{thm:meeting-prob} in \Cref{subsec:meeting-prob}: for~$n\geq 5$,~$\CFK$ has a higher probability of meeting than~$\AW$ after~$r$ rounds for any~$r\geq 3$; for~$n=4$ the probabilities are equal.
To gain intuition and illustrate the key ideas of the proof, we prove a simplified version of this result in \Cref{sec:warm-up}, concerning variants of~$\AW$ and~$\CFK$ that visit all~$n$ locations in each round.
We then prove \Cref{thm:meeting-prob} by expressing the probability of meeting within the first three rounds, under $\CFK$ and under the condition that both players move in all of these rounds, in terms of the proportion of \textit{shifted derangements}. Shifted derangements generalize the notion of derangements to pairs of permutations of distinct sets; a formal definition, and a bound on their proportion among the set of all permutations that is used in the proof of the theorem, are given in \Cref{subsec:derangements}.

Our main result, concerning the expected meeting time under $\CFK$, is shown in \Cref{subsec:meeting-time}.
The key ingredient of the proof, along with \Cref{thm:meeting-prob}, is \Cref{lem:diff-third-round}, which gives an improvement in expected meeting time under the condition that the players move in the first three rounds and meet in the third round.
To prove the lemma we further condition on the respective first locations the players visit in the third round and compute, for both $\AW$ and $\CFK$, the conditional expectations and associated conditional probabilities.
The conditional expectations are the same for both strategies and are given in \Cref{lem:cond-expectations}; the key ingredient for their computation is an explicit expression for the expected index of the first fixed point between a permutation taken uniformly at random from~$\calP(\{1,\ldots,n\})$ and a permutation taken uniformly at random from~$\calP(\{1+b,\ldots,n+b\})$, conditional on them having at least one fixed point (\Cref{lem:exp-first-fixed-point}). 
Regarding the conditional probabilities we show that, under the condition that players move for all of the first three rounds and meet for the first time in the third round, $\CFK$ has a larger bias towards the following: (i) the players meeting in the first step of the third round, which leads to the best possible meeting time under the given conditions; and (ii) both players visiting the home location of the other player in the first step of the third round, which leads to an earlier meeting time later in the round compared to the case where players visit distinct locations in the first step that are not home locations.
Computing the conditional probabilities for~$\CFK$ is nontrivial due to the correlations between different rounds; we first compute the joint probabilities with which the players fail to meet in the first two rounds and visit a certain pair of locations in the first step of the third round (\Cref{lem:cond-probabilities}) and then apply Bayes' rule.

The fact that \Cref{thm:meeting-time} follows from \Cref{thm:meeting-prob} and \Cref{lem:diff-third-round} is intuitive but nontrivial.
To prove it, we use that both $\AW$ and $\CFK$ restart every~$3(n-1)$ steps to write the unconditional expectations in terms of expectations conditioned on either (i)~meeting within the first two rounds or (ii)~meeting in the third round. In the first case conditional expectations are the same under both strategies, and we bound their difference in the second case. For $n=4$ we exploit the fact that the meeting probabilities within each round are the same under both strategies. For~$n\geq 5$ we show that, conditioned on meeting in the third round, (i)~the expected meeting time is lower when both players move for all three rounds than in the other cases and (ii) this event has a higher probability under~$\CFK$ than under~$\AW$ by \Cref{thm:meeting-prob}.

\subsection{Further Related Work}

Rendezvous problems have been discussed informally many times, for example by \citet{Schelling60} for two parachutists who have landed in a field and by \citet{Mosteller65} for two strangers wanting to meet in New York City. The particular problem we study here was first stated formally by Alpern in 1976~\citep[cf.][]{Alpern02}, as a more tractable discrete version of the astronaut problem, where players try to meet on the surface of a sphere. No significant progress seems to have been made on the astronaut problem, and the intuition about the relative difficulty of these two problems was most certainly correct. But as we have already discussed, discrete rendezvous problems are anything but tractable. Another notorious example places the two players on the real line, at distance two and without a common orientation~\citep{Alpern95}. While this problem was formulated as a continuous one, it turns out to be discrete since strategies where players move at maximum speed and change direction only at unit time steps dominate all other strategies~\citep{HDVZ08}. The asymmetric version is again not very difficult, and has an optimal meeting time of~$3.25$~\citep{AlGa95}. It is worth noting that in the optimal pair of strategies one player remains more stationary than the other, but not completely stationary. The optimal meeting time for the symmetric version lies in the interval~$(4.1520,4.2574)$; both the upper and the lower bound were obtained using semidefinite programming. 
The problem has also been studied for the case where the initial distance is unknown and the goal is to optimize the competitive ratio between the meeting time and half the initial distance~\citep{Baston98}. The competitive ratio is at most~$11.028$ for asymmetric strategies~\citep{AlBe00}, which has been conjectured to be optimal, and at most~$13.926$ for symmetric strategies~\citep{KlimmSST22}. The problem where only one player moves, known as the cow path problem, is somewhat more tractable. Here optimal deterministic and randomized strategies are known, with competitive ratios~$9$~\citep{BCGR93} and~$4.591$~\citep{KRT96a}, respectively. 

The standard objective in rendezvous search is minimizing the expected meeting time, but \citet{DHMR16} have studied our problem with the goal of maximizing the probability of meeting after at most~$k$ steps for some~$k$, in the limit as~$n\to\infty$. It turns out that $\AW$ is optimal in this respect when~$k\leq n$ but far from optimal when~$k\geq 4n$; indeed, while $\AW$ fails to meet with constant probability after any fixed number of steps, there exists a strategy that meets almost surely after~$4n$ steps. The analysis of meeting probabilities has also led to a lower bound on the expected meeting time of~$0.6389n$, again as~$n$ grows; the best known strategy for this case is $\AW$ with a meeting time of around~$0.8289n$.

\cite{ABE99} have considered a generalization where the game is played on a graph and players can only move to adjacent vertices.\footnote{Note that this is a different graph from the one we have considered earlier, where players meet when they visit adjacent vertices.} The absence of some edges makes the problem harder by restricting travel but may provide opportunities for coordination. It clearly offers an advantage if the graph is \emph{not} vertex-transitive, by allowing players to restrict attention to a subset of the vertices, but even for vertex-transitive graphs it can make the problem significantly easier at least with respect to meeting probabilities~\citep{DHMR16}.

A sizeable literature exists on continuous problems, and problems on the line in particular. \citet{Alpern11} provides a detailed overview of the area with many open questions. 

\section{Preliminaries}\label{sec:prelims}

Let~$\NN$ denote the positive integers and~$\NN_0\coloneqq\NN\cup\{0\}$. For~$n\in\NN$, let~$[n]\coloneqq\{1,2,\dots,n\}$ be the set of integers from~$1$ to~$n$. For a set~$S$, let~$\calP(S)$ be the set of permutations of~$S$, understood as bijections~$\pi\colon[|S|]\to S$ mapping indices in~$\{1,\ldots,|S|\}$ to distinct elements of~$S$.

A \textit{strategy} for the symmetric rendezvous problem on~$n$ locations is a distribution over infinite sequences of elements of~$[n]$. We will often also use the term strategy to refer to a family of such distributions, one for each value of~$n$, and may omit the dependence on~$n$ when its value is clear from context.
We will denote sequences by functions~$s\colon\NN_0\to[n]$ and strategies by distributions~$\sigma$ over such functions. 
We will assume for simplicity that both players apply the same strategy according to their own labeling of the~$n$ locations, and that the bijection between their two labelings is drawn uniformly at random.
The \textit{meeting time} of a strategy~$\sigma$ is then
\[
    T_\sigma\coloneqq \min\{t\in \NN_0\colon \pi(s^1(t)) = s^2(t)\},
\]
where~$s^1$ and~$s^2$ are sequences drawn independently from~$\sigma$ and~$\pi$ is a permutation drawn uniformly at random from~$\calP([n])$.

\subsection{Home Location and Round-based Strategies}\label{subsec:round-based}

We only consider strategies that choose the first location uniformly at random, independently of what follows. Let~$\sigma$ be such a strategy, and let~$t_\sigma$ denote a random variable distributed as \(T_\sigma\) conditional on \(T_\sigma>0\).
Then
\[ \PP[T_\sigma\leq t^*] = \frac{1}{n} + \frac{n-1}{n}\PP[t_\sigma\leq t^*] \quad\text{for all~$t^*\in \NN$, and} \qquad \EE[T_\sigma] = \frac{n-1}{n}\EE[t_\sigma]. \]
 From now on, we will analyze the expected meeting times of strategies in terms of~$t_{\sigma}$. For simplicity, we will further assume a \textit{universal labeling} of the locations under which player~$i\in\{1,2\}$ starts at location~$i$ at time~$0$ and then visits a (random) sequence~$s^i\colon\NN\to[n]$ of locations. We will refer to the location~$i$ at which player~$i$ starts as the player's \emph{home location}. That the strategies we use are symmetric will be readily appreciated from their definition.

We will focus on \textit{round-based strategies} with rounds of length~$\ell$ for some~$\ell\in\NN$, or~\emph{$\ell$-strategies} for short.  The time steps between~$(r-1)\ell+1$ and~$r\ell$ for~$r\in\NN$ will be called the \emph{$r$th round}. Players decide at the beginning of a round whether they stay at their home location for the entire round or move, possibly in a random way, across locations; this process is repeated until rendezvous. More formally, player~$i\in\{1,2\}$ chooses a sequence~$\chi^i_1,\chi^i_2,\ldots$, where~$\chi^i_r\in\{\stay,\move\}$ for each~$r\in\NN$, and a sequence~$\pi^i_1,\pi^i_2,\ldots$, where~$\pi^i_r\colon [\ell]\to [n]$ is a sequence of length~$\ell$ for each~$r\in\NN$.
If~$\chi^i_r=\stay$, then~$s^i((r-1)\ell+j)=i$ for all~$j\in [\ell]$, \ie player~$i$ \emph{stays} at her home location during the~$r$th round. 
If~$\chi^i_r=\move$, then~$s^i((r-1)\ell+j)=\pi^i_{r}(j)$ for every~$j\in [\ell]$, \ie player~$i$ \textit{moves} across locations in an order given by~$\pi^i_{r}$.
A particular round-based strategy is completely defined by the round length~$\ell$ and the distributions from which the sequences~$\chi^i_1,\chi^i_2,\ldots$ and~$\pi^i_1,\pi^i_2,\ldots$ are drawn.

\citet{AW90} proposed an~$(n-1)$-strategy with parameter~$\theta\in [0,1]$, which we will denote~$\AW(\theta)$, where 
\begin{enumerate}[label=(\roman*)]
    \item $\chi^i_r=\stay$ with probability~$\theta$ and~$\chi^i_r=\move$ with probability~$1-\theta$ for each~$i\in \{1,2\}$ and~$r\in\NN$, independently of all other rounds; and
    \item $\pi^i_r$ is a permutation taken uniformly at random from~$\calP([n]\setminus \{i\})$ for each~$i\in\{1,2\}$ and~$r\in\NN$, independently of all other rounds.
\end{enumerate}
With the goal of reducing the expected meeting time we define a different~$(n-1)$-strategy, which we will call \emph{\CFKt} with parameter~$\theta\in [0,1]$ and denote~$\CFK(\theta)$, where 
\begin{enumerate}[label=(\roman*)]
    \item $\chi^i_r=\stay$ with probability~$\theta$ and~$\chi^i_r=\move$ with probability~$1-\theta$ for each player~$i\in \{1,2\}$ and round~$r\in \NN$, independently of all other rounds; and
    \item for each~$i\in \{1,2\}$,~$\pi^i_r$ is a permutation taken uniformly at random from~$\calP([n]\setminus \{i\})$ for each round~$r\in \NN$ with~$r/3\notin \NN$, independently of all other rounds, and~$\pi^i_{r}$ is a permutation taken uniformly at random from the set \vspace{-1ex}
    \[
        \begin{array}{@{}ll}
            \{\pi\in \calP([n]\setminus \{i\}): \pi(1)\!=\!\pi^i_{r-2}(1)\} & \text{if } \chi^i_{r-2}\!=\!\chi^i_{r-1}\!=\!\chi^i_r\!=\!\move ,\ \pi^i_{r-2}(1)=\pi^i_{r-1}(1),\\
            \{\pi\in \calP([n]\setminus \{i\}): \pi(1)\!\notin\! \{\pi^i_{r-2}(1),\pi^i_{r-1}(1)\}\} & \text{if } \chi^i_{r-2}\!=\!\chi^i_{r-1}\!=\!\chi^i_r\!=\!\move ,\ \pi^i_{r-2}(1)\neq \pi^i_{r-1}(1),\\
            \calP([n]\setminus \{i\}) & \text{otherwise,}
        \end{array}
    \] \par\vspace*{-1ex}
    for each~$r\in \NN$ with~$r/3\in \NN$.
\end{enumerate}
For both strategies, in each round, the players stay at their home location with probability~$\theta$ and tour all non-home locations with the remaining probability~$1-\theta$.
In~$\AW(\theta)$, tours are taken independently and uniformly at random for each round where a player moves. 
$\CFK(\theta)$ differs from $\AW(\theta)$ only in rounds that are multiples of three, and only if a player moves in this and the previous two rounds.
In this case, if the previous two permutations start with the same location,~$\CFK(\theta)$ chooses the third permutation uniformly at random from those also starting with this common location; if the previous two permutations start with distinct locations, $\CFK(\theta)$ chooses the third permutation uniformly at random from those starting with a location that differs from both. 
We note that this is well defined when the number of locations is at least four.

\subsection{Shifted Derangements}\label{subsec:derangements}

For a permutation~$\pi$, call~$i\in [n]$ such that~$\pi(i)=i$ a \emph{fixed point} of~$\pi$, and call~$\pi$ a \emph{derangement} if it does not have a fixed point.
For two permutations~$\pi^1$ and~$\pi^2$, call~$i\in [n]$ such that~$\pi^1(i)=\pi^2(i)$ a \emph{fixed point} between~$\pi^1$ and~$\pi^2$, and call~$\pi^1$ and~$\pi^2$ \emph{derangements} if there is no fixed point between them.

In each round of $\AW$, each player~$i$ plays a random permutation~$\pi^i$ of the locations $[n] \setminus \{i\}$, i.e., the first player plays a random permutation of the locations $\{2,\dots,n\}$, the second player plays a random permutation of the locations $\{1\} \cup \{3,\dots,n\}$, and we will be interested in the probability that they share a location, i.e., that $\pi^1(j) = \pi^2(j)$ for some $j \in [n-1]$. As no meeting in location $1$ can appear anyway, this is the same as the probability that two random permutations of $\{2,\dots,n\}$ and $\{3,\dots,n+1\}$ share a location which is, in turn, equal to the probability that two random permutations of $\{1,\dots,n-1\}$ and $\{2,\dots,n\}$ share a location. This probability is independent of the first permutation, so we may assume that it is the identity on the first $n-1$ elements.
So the analysis of the $\AW$ strategy naturally involves the probability that a permutation of the elements $\{1+k,\dots,n+k\}$ is a derangement for $k=1$. 
As we will see, the analysis of $\CFK$ also involves that probability for larger values of $k > 1$ because fixing the first position may remove further possibilities of meeting from the remaining round.

Thus, explicit and approximate values for this probability are needed.
To obtain these, we first use a recursion dating back to \citet{Euler1753} who defined a difference table with entries~$(d^k_{n})$ for~$n \in \NN_0$ and~$k \in \{0,\dots,n\}$ given by the following recurrence relation:
\begin{align*}
d_n^n &\coloneqq n! &&\text{ for all } n \in \NN_0, \\
d_n^k &\coloneqq d_n^{k+1} - d_{n-1}^k &&\text{ for all } n \in \NN, k \in \{1,\dots,n-1\}.
\end{align*}
Some initial rows and columns of this table are shown in \Cref{tab:euler}. 
\citeauthor{Euler1753}'s motivation was to calculate the number of derangements, and he proved that this number is equal to~$d_n^0$. The following \Cref{lem:shifted-derangements} is a slight generalization of this result, which is already implicit in \citeauthor{Euler1753}'s work.

\begin{restatable}{lemma}{lemShiftedDerangements}\label{lem:shifted-derangements}
For all~$n \in \NN_0$ and~$k \in \{0,\dots,n\}$, the entry~$d_n^k$ is equal to the number of permutations~$\pi \in \mathcal{P}(\{1+k,\dots,n+k\})$ with~$\pi(i) \neq i$ for all $i\in [n]$.
\end{restatable}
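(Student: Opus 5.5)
Let $e_n^k$ denote the number of permutations $\pi\in\calP(\{1+k,\dots,n+k\})$ with $\pi(i)\neq i$ for all $i\in[n]$. The plan is to show that $e_n^k$ satisfies the same boundary condition and the same recurrence as $d_n^k$. Since the recurrence determines the table uniquely, the claim then follows by induction on $n-k$. Note that the recurrence $d_n^k=d_n^{k+1}-d_{n-1}^k$ is equivalent to $d_n^{k+1}=d_n^k+d_{n-1}^k$. Starting from $d_n^n=n!$, it therefore determines every entry with $k<n$ by downward induction on $k$ for fixed $n$, using row $n-1$, which is already known by induction on $n$. Although the stated range of the recurrence is $k\ge 1$, I will establish it for $k\in\{0,\dots,n-1\}$, which also covers Euler's column $k=0$.

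\textbf{Boundary.} For $k=n$ the set $\{1+n,\dots,2n\}$ is disjoint from $[n]$. Hence no index $i\in[n]$ can satisfy $\pi(i)=i$, every permutation qualifies, and $e_n^n=n!$.

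\textbf{Recurrence.} Fix $0\le k\le n-1$ and compare the target sets $S_k=\{1+k,\dots,n+k\}$ and $S_{k+1}=\{2+k,\dots,n+k+1\}$. They differ only in that the element $k+1$ is replaced by $n+k+1$. The element $k+1$ lies in $[n]$, so it is a potential fixed value at index $k+1$. The element $n+k+1$ exceeds $n$, so it never creates a fixed point. I will therefore split the permutations counted by $e_n^{k+1}$ according to the index $j$ with $\pi(j)=n+k+1$, and map each one to a permutation of $S_k$ by replacing the value $n+k+1$ with $k+1$. This map is a bijection from $\calP(S_{k+1})$ to $\calP(S_k)$. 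It preserves the fixed-point-free property except possibly at index $k+1$: the image has a fixed point there exactly when $j=k+1$.

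It follows that
\[
e_n^{k+1}=e_n^k+\#\{\pi\in\calP(S_{k+1}) \text{ fixed-point-free on }[n]\text{ with } \pi(k+1)=n+k+1\}.
\]
For the last count, delete index $k+1$ and the value $n+k+1$, and relabel indices above $k+1$ and values in the range $k+2,\dots,n+k$ by subtracting $1$. This relabeling preserves the condition $\pi(i)\ne i$ and identifies the remaining permutations with those of $\{1+k,\dots,(n-1)+k\}$ that are fixed-point-free on $[n-1]$. That count is $e_{n-1}^k$. Hence $e_n^{k+1}=e_n^k+e_{n-1}^k$, which is the claimed recurrence.

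\textbf{Main obstacle.} The step needing care is this final relabeling. One has to check that after removing index $k+1$, the coincidences $\pi(i)=i$ for $i<k+1$ and for $i>k+1$ correspond exactly to fixed points of the shifted-down permutation. Indices $i\le k$ keep their index, but their values all exceed $k+1$ after removal. So before the shift they have no fixed points in the relevant range, and I will verify that after subtracting $1$ from the values these indices still receive no spurious fixed points. Concretely, the new value $v-1$ must differ from $i$ whenever $v\ne i$, given $i\le k<v-1$ or the indices are jointly shifted. I will do this by a direct case check on $i<k+1$ versus $i>k+1$.
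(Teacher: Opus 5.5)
Your proposal is correct and matches the paper's proof. Both argue by induction on $n-k$ and split the permutations of $\{k+2,\dots,n+k+1\}$ according to whether $\pi(k+1)=n+k+1$. If not, replacing the value $n+k+1$ by $k+1$ gives a bijection onto the fixed-point-free permutations of $\{k+1,\dots,n+k\}$; if so, deleting index $k+1$ and the value $n+k+1$ and shifting down gives the $e^k_{n-1}$ term. The case check you flag goes through: for $i\le k$ the remaining values are at least $k+2$, so after subtracting $1$ they are still at least $k+1>i$. You are also right that the recurrence is needed for $k=0$, which the paper's induction uses implicitly when $n=\ell+1$.
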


\Cref{lem:shifted-derangements} was shown in a slightly different context by \citet{Feinsilver2011}.
For the sake of completeness, we include a proof in our context and notation in~\Cref{app:lem:shifted-derangements}.

\citeauthor{Feinsilver2011} also showed that
\begin{align*}
d_n^k = \sum_{j=k}^n (-1)^{n-j} \binom{n-k}{n-j} j! = \sum_{j = 0}^{n-k} (-1)^j \binom{n-k}{j} (n-j)! ,
\end{align*}
which yields in particular $\smash{d_n^0 = n! \sum_{j=0}^{n} \frac{(-1)^j}{j!}}$. Noting that $\smash{\sum_{j=0}^\infty \frac{(-1)^j}{j!}} = 1/e$, this shows that the number of derangements can be approximated by $n!/e$. Making the errors in this approximation explicit and using the recursive nature of the values $d_n^k$, we obtain the following bounds.

\begin{restatable}{lemma}{lemBounds}\label{lem:bounds}
The following equalities hold:
\begin{enumerate}
\item $d_n^0 = \lfloor \frac{n!}{e} + \epsilon\rfloor$ for any~$n\in \NN$ and~$\epsilon \in \bigl[\frac{1}{3},\frac{1}{2}\bigr]$;
\item $d_n^1 = \lfloor \frac{n! + (n-1)!}{e} + \epsilon\rfloor$ for any~$n\in \NN$ and~$\epsilon \in \bigl[\frac{1}{3},\frac{7}{8}\bigr]$;
\item $d_n^2 = \lfloor \frac{n! + 2(n-1)! + (n-2)!}{e} + \epsilon\rfloor$ for any~$n\in \NN$ with~$n\geq 2$ and~$\epsilon \in \bigl[\frac{1}{3},\frac{7}{8}\bigr]$.
\end{enumerate}
\end{restatable}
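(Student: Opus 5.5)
The plan is to use the explicit alternating-sum formula of \citeauthor{Feinsilver2011},
\[
d_n^k=\sum_{j=0}^{n-k}(-1)^j\binom{n-k}{j}(n-j)!,
\]
and compare each of the three cases with the corresponding truncation of a series for $1/e$. For $k=0$ this is the classical argument. Writing $d_n^0 = n!\sum_{j=0}^n (-1)^j/j!$, we get
\[
\frac{n!}{e} - d_n^0 = n!\sum_{j>n}\frac{(-1)^j}{j!}.
\]
The tail is alternating with decreasing terms, so its absolute value is less than $1/(n+1)$, and it has the sign of $(-1)^{n+1}$. Since $d_n^0$ is an integer, it suffices to show that $n!/e + \epsilon - d_n^0 \in [0,1)$ for all $\epsilon\in[\frac13,\frac12]$. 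For $n\ge 2$ the error is at most $1/3$ in absolute value, and this gives the claim. For $n=1$ we check directly that $1/e+\epsilon\in[0.70,0.87)$, so the floor is $0=d_1^0$.

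For $k=1$ and $k=2$ I would avoid new series manipulations and use the recursion $d_n^k=d_n^{k+1}-d_{n-1}^k$ read in the other direction, namely $d_n^{k+1}=d_n^k+d_{n-1}^k$. This gives
\[
d_n^1=d_n^0+d_{n-1}^0,\qquad d_n^2=d_n^1+d_{n-1}^1=d_n^0+2d_{n-1}^0+d_{n-2}^0,
\]
which also explains the numerators $n!+(n-1)!$ and $n!+2(n-1)!+(n-2)!$. Write $d_m^0 = m!/e + \delta_m$ with $\delta_m = -m!\sum_{j>m}(-1)^j/j!$. A cleaner closed form is available here: from the recurrence $d_m^0 = m d_{m-1}^0 + (-1)^m$ one gets that $\delta_m$ has sign $(-1)^m$ and $|\delta_m|\in\bigl(\tfrac{1}{m+2},\tfrac{1}{m+1}\bigr)$.

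With this in hand, the error for $d_n^1$ is $\delta_n+\delta_{n-1}$. The two terms have opposite signs, so the sum is small: it lies between $-\frac{1}{n}+\frac{1}{n+2}$ and $\frac{1}{n+1}-\frac{1}{n+1}$, roughly. Concretely, I would bound $|\delta_n+\delta_{n-1}|$ by $|\delta_{n-1}|-|\delta_n| \le \frac1n-\frac1{n+2}=\frac{2}{n(n+2)}$, which is at most $2/3$ for $n=1$ and is much smaller afterwards. I would then verify that the required window $d_n^1 - \frac{n!+(n-1)!}{e} \in (\epsilon-1,\epsilon]$ holds for every $\epsilon\in[\frac13,\frac78]$. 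This amounts to showing that the error lies in $(-\frac18,\frac13]$, treating the sign by parity of $n$ and handling small $n$ (say $n\le 3$) by direct computation from \Cref{tab:euler}.

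The case $d_n^2$ is the same argument with error $\delta_n+2\delta_{n-1}+\delta_{n-2}$. Since $\delta_n,\delta_{n-2}$ share one sign and $\delta_{n-1}$ has the other, I would group the error as $(\delta_n+\delta_{n-1})+(\delta_{n-1}+\delta_{n-2})$, so that each bracket is bounded as in the $k=1$ case. The main obstacle is making these bounds tight enough to land inside the asymmetric window $(-\frac18,\frac13]$ for all $n\ge 2$. The crude bounds $|\delta_m|<\frac1{m+1}$ are too weak for small $n$, so I would use the sharper two-sided estimate on $|\delta_m|$ from the alternating tail, together with the fixed sign pattern, and dispatch $n\le 4$ by explicit values ($d_2^2=2$, $d_3^2=4$, $d_4^2=14$ and so on).
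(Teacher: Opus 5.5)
Your proposal is correct. Its skeleton is the paper's: reduce $d_n^1$ and $d_n^2$ to $d_n^0+d_{n-1}^0$ and $d_n^0+2d_{n-1}^0+d_{n-2}^0$, write each $d_m^0$ as $m!/e$ plus an alternating tail, and place the error in $(-\frac18,\frac13]$ by parity.

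Where you differ is in how the error is bounded. The paper merges the tails exactly. Using $\frac{n!+(n-1)!}{(n+1)!}=\frac1n$, the error for $d_n^1$ becomes a single alternating tail starting at $j=n+2$. It therefore has sign $(-1)^{n+1}$ and modulus below $\frac{1}{n(n+2)}$. For $d_n^2$, an analogous identity leaves a leading term $\frac{(-1)^n}{(n+1)(n-1)}$ that dominates the remaining tail. These bounds hold for every $n$ with no separate cases, and $\frac13$ and $\frac78=1-\frac18$ are exactly their values at the smallest $n$.

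You instead bound each $\delta_m$ on its own by $\frac{1}{m+2}<|\delta_m|<\frac{1}{m+1}$ and use cancellation between neighbours of opposite sign. This is more modular, but it loses roughly a factor of $2$. So the explicit small-case checks are genuinely needed: $n\in\{1,2\}$ for $d_n^1$ and $n\in\{2,3\}$ for $d_n^2$.

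Two details to get right when you write it up.
\begin{itemize}
\item In the $d_n^2$ case the two brackets have opposite signs, $(-1)^{n-1}$ and $(-1)^n$. The error is $(-1)^n(A-B)$ with $0<A<\frac{2}{(n-1)(n+1)}$ and $0<B<\frac{2}{n(n+2)}$, which lies in $(-\frac18,\frac13]$ for all $n\ge 4$. If you merely add the two bracket bounds, the argument only works for $n\ge 6$, and you would also have to check $n\in\{4,5\}$ by hand.
\item Derive the two-sided estimate on $|\delta_m|$ from the alternating tail $\frac{1}{m+1}-\frac{1}{(m+1)(m+2)}+\cdots$, as in your first case. The recurrence $\delta_m=m\delta_{m-1}+(-1)^m$ does not propagate the lower bound $\frac{1}{m+2}$ by induction: from $|\delta_{m-1}|<\frac1m$ it only gives $|\delta_m|>0$.
\end{itemize}
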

We defer the proof to \Cref{app:lem:bounds}. The calculations for the approximation of~$d_n^0$ were already given by \citet{Hassani2004cycles}.

\begin{table}[tb]
\centering
\begin{tabular}{rrrrrrrr}
\toprule
&& \multicolumn{6}{c}{$k$} \\
&& 0 & 1 & 2 & 3 & 4 & 5 \\
\midrule
\multirow{6}{*}{$n$} & \multicolumn{1}{r|}{0} & 1\\
& \multicolumn{1}{r|}{1} & 0 & 1\\
& \multicolumn{1}{r|}{2} & 1 & 1 & 2 \\
& \multicolumn{1}{r|}{3} & 2 & 3 & 4 & 6 \\
& \multicolumn{1}{r|}{4} & 9 & 11 & 14 & 18 & 24 \\
& \multicolumn{1}{r|}{5} & 44 & 53 & 64 & 78 & 96 & \!\!120\\

\bottomrule
\end{tabular}
\caption{Euler's difference table \label{tab:euler}
}
\end{table}

For~$n\in \NN_0$ and~$k\in \{0,\ldots,n\}$, we denote by  $\hat{d}^k_n\coloneqq d^k_n/n!$ the \emph{fraction}~of permutations in~$\calP(\{1+k,\ldots,n+k\})$ that do not have a fixed point.

\section{Warm-Up: Correlating Consecutive Permutations}\label{sec:warm-up}

To gain intuition, and illustrate the key ideas behind our improvement over~$\AW$, let us first consider simplified strategies that treat the home location like any other location when moving. We will focus on meeting probabilities for this section.

Let the Simplified Anderson--Weber strategy with parameter~$\theta$, or~$\SAW(\theta)$ for short, 
be the~$n$-strategy where 
\begin{enumerate}[label=(\roman*)]
    \item $\chi^i_r=\stay$ with probability~$\theta$ and~$\chi^i_r=\move$ with probability~$1-\theta$ for each player~$i\in \{1,2\}$ and round~$r\in \NN$, independently of all other rounds; and
    \item $\pi^i_r$ is a permutation taken uniformly at random from~$\calP([n])$ for each player~$i\in \{1,2\}$ and round~$r\in \NN$, independently of all other rounds.
\end{enumerate}
Let the Simplified \CFKt strategy with parameter~$\theta$, or~$\SCFK(\theta)$, be the~$n$-strategy where 
\begin{enumerate}[label=(\roman*)]
    \item $\chi^i_r=\stay$ with probability~$\theta$ and~$\chi^i_r=\move$ with probability~$1-\theta$ for each player~$i\in \{1,2\}$ and round~$r\in \NN$, independently of all other rounds; and
    \item for each~$i\in \{1,2\}$,~$\pi^i_r$ is a permutation taken uniformly at random from~$\calP([n])$ for each round~$r\in \NN$ with~$r/3\notin \NN$, independently of all other rounds, and~$\pi^i_r$ is a permutation taken uniformly at random from the set \vspace*{-1ex}
    \[
        \begin{array}{@{}ll}
            \{\pi\in \calP([n]): \pi(1)=\pi^i_{r-2}(1)\} & \text{if } \chi^i_{r-2}=\chi^i_{r-1}=\chi^i_r=\move \text{ and } \pi^i_{r-2}(1)=\pi^i_{r-1}(1),\\
            \{\pi\in \calP([n]): \pi(1)\notin \{\pi^i_{r-2}(1),\pi^i_{r-1}(1)\}\} & \text{if } \chi^i_{r-2}=\chi^i_{r-1}=\chi^i_r=\move \text{ and } \pi^i_{r-2}(1)\neq \pi^i_{r-1}(1),\\
            \calP([n]) & \text{otherwise,}
        \end{array}
    \]\par\vspace*{-1ex}
    for all~$r\in \NN$ with~$r/3\in \NN$.
    \end{enumerate}

Focusing on rounds in which both players move,\footnote{These are, in a sense, the \textit{interesting} rounds when we are looking for an improvement: players do not necessarily meet but may be able to learn in order to better coordinate in subsequent rounds.} players fail to meet if and only if the permutations they choose are derangements. Thus, under~$\SAW$, the probability that the players have not met within~$r\in\NN$ moving rounds is~$(\hat{d}^0_{n})^r$. We will see that the probability is smaller under~$\SCFK$ when~$r\geq 3$.

\subsection{Rendezvous with Visibility}

To understand this improvement, it is helpful to view~$\SAW$ and~$\SCFK$ as operating on the complement of the derangement graph on~$\calP([n])$. Vertices in this graph are permutations in~$\calP([n])$, and two different vertices are connected by an edge if they are \emph{not} derangements. 
Rendezvous in a particular round occurs if the players choose the same vertex or two neighboring vertices.

This motivates a generalization of the rendezvous problem on a vertex-transitive \textit{visibility graph}~$G=(V,E)$, where players choose a vertex in each time step and meet if they have chosen vertices~$u$ and~$v$ such that~$u=v$ or~$\{u,v\}\in E$. We obtain the original rendezvous problem by setting~$E=\emptyset$. 
Recall that a graph $G = (V,E)$ is \emph{vertex-transitive} if for all $u,v \in V$ there is a permutation $f\colon V \to V$ with $f(u) = v$ and $(w,x) \in E$ if and only if $(f(w),f(x)) \in E$. 
Vertex-transitivity is a useful property for visibility graphs, since in a vertex-transitive graph all vertices look the same and players therefore cannot coordinate a priori on a subset of vertices. 
For a given visibility graph, a rendezvous strategy~$\sigma$, as well as the random variables~$T_\sigma$ and~$t_\sigma$, can be defined as before.\footnote{We again focus on~$t_\sigma$ and assume that players choose different locations and do not meet at time~$0$.} 
We further define~$\delta\coloneqq\frac{1}{|V|^2}|\{u,v\in V: u\neq v,\ \{u,v\}\notin E\}|$, and note that~$\delta$ is the probability that the players fail to meet in a single round when choosing vertices uniformly at random.

The round-based strategies we have introduced for the original rendezvous problem can be understood as strategies for rendezvous with visibility on the complement of the derangement graph, which is clearly vertex-transitive.
$\SAW$ chooses a permutation uniformly at random in each moving round and thus corresponds to the \textit{uniform strategy}~$\Uni$ for rendezvous with visibility, which in every step visits a vertex chosen uniformly at random. 
$\SCFK$, on the other hand, belongs to a family of strategies we will call \textit{$\ell$-clique strategies}. 
Call a partition~$V_1,\ldots, V_m$ of the set~$V$ a \textit{clique partitioning} if~$V_i$ is a clique with~$|V_i|=q$ for all~$i\in [m]$.
Graphs that admit such a partitioning are also called (weakly) $(m,q)$-clique-partitioned \citep{ErskineGS22}; for further properties of vertex-transitive clique-partitioned graphs, see, e.g., \citet{DobsonHMV15}.
In the complement of the derangement graph, each vertex has the same number of edges to vertices in other cliques, i.e., there is~$\delta_{\mathrm{out}}$ such that, for all distinct~$i,j\in [m]$ and~$u\in V_i$,~$|\{v \in V_j: \{u,v\}\in E\}|=\delta_{\mathrm{out}}$.\footnote{To see this, note that each automorphism can be decomposed into a permutation of the cliques and $m$ permutations of the vertices within the cliques.}
Given a vertex-transitive $(m,q)$-clique-partitioned graph, and~$\ell\in \NN$ with~$\ell\leq \min\{m-1,q-1\}$, the~$\ell$-clique strategy with parameter~$\mu\in [0,1]$, denoted by~$\Vis_{\ell}(\mu)$, operates in rounds of length~$\ell$. 
For each round~$r\in\NN$, and independently of all other rounds, a player stays with probability~$\mu$ and moves with probability~$1-\mu$. The player then chooses for each step between~$(r-1)\ell+1$ and~$r\ell$ a vertex uniformly at random from a restricted subset of vertices; when staying she chooses, independently from the other steps, vertices from her \textit{home clique}, i.e., the clique containing the location she visited at step~$0$; when moving she chooses from cliques not yet visited in this round.

\subsection{Beating the Uniform Strategy}

On an intuitive level,~$\ell$-clique strategies mirror~$\ell$-strategies like~$\AW$ in a way that exploits the structure of the visibility graph:
Players in staying rounds now stay at their home clique but move randomly within it; players in moving rounds pick both an unvisited clique and a location within it uniformly at random.
The random choice within a clique is made for simplicity, to keep the meeting probability constant for all steps of a round under the condition of visiting distinct cliques.
What enables this kind of strategy to achieve an improvement over the uniform strategy is that it shifts probability mass towards the event that both players visit the same clique when at least one of them moves, an event that guarantees rendezvous.

It is again helpful to look at the complement of the visibility graph.
The uniform strategy has a non-meeting probability, after~$\ell$ steps, equal to~$\delta^\ell$. 
On the other hand,~$\ell$-clique strategies reduce the problem to a contracted graph where the~$m$ independent sets become vertices and players choose either the same vertex or~$\ell$ distinct vertices for the~$\ell$ steps of a round.
When visiting two distinct vertices, the non-meeting probability is~$\frac{m}{m-1}\delta$, because contracted vertices had no edges between them and contraction has thus not changed the average degree.
\Cref{fig:derangement-graphs-a} shows the original and the contracted derangement graph for permutations of length~$3$.
The trade-off is now clear: compared to the uniform strategy,~$\ell$-clique strategies are played on a smaller graph---where visiting the same vertex is easier---but with lower visibility---making rendezvous harder when visiting distinct vertices. 
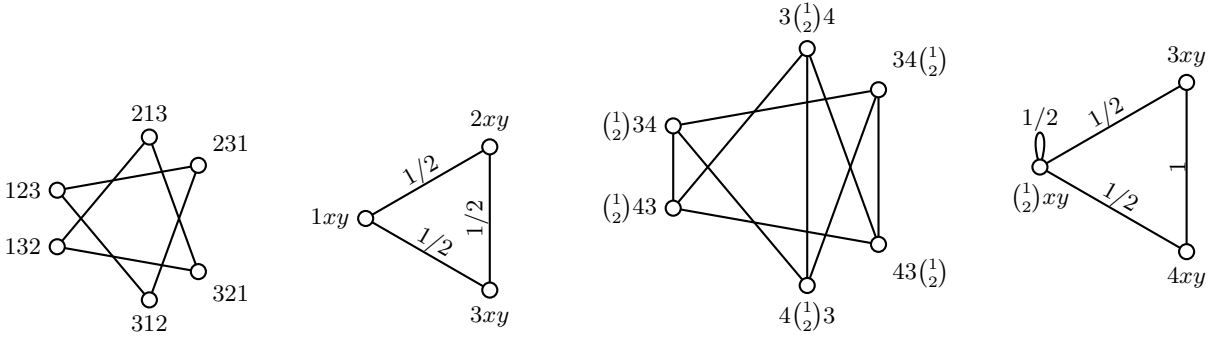
\begin{figure}[tb]
\centering
\begin{subfigure}[b]{0.44\textwidth}
\begin{tikzpicture}[every edge quotes/.style = {above=-3pt, font=\footnotesize, sloped}, every loop/.style={}]
\footnotesize
  \tikzset{v/.style={circle,draw,thick,inner sep=1pt,outer sep=0pt, minimum size=.2cm}}
  \matrix[column sep=.3cm,row sep=.7cm]{
    {\draw + (40:1.1) node[label=above right:231](1)[v]{} + (80:1.1) node[label=above:213](2)[v]{} + (160:1.1) node[label=left:123](3)[v]{} + (200:1.1) node[label=left:132](4)[v]{} + (280:1.1) node[label=below:312](5)[v]{} + (320:1.1) node[label=below right:321](6)[v]{};
	\foreach \x/\y in {1/3,2/4,3/5,4/6,5/1,6/2} {\draw[-,thick] (\x) to (\y);}} && 
    {\draw + (60:1.1) node[label=above:$2xy$](1)[v]{} + (180:1.1) node[label=left:$1xy$](2)[v]{} + (300:1.1) node[label=below:$3xy$](3)[v]{};
	\foreach \x/\y in {1/2,3/2,3/1} {\draw[-,thick] (\x) edge["$1/2$"] (\y);}}\\};
\end{tikzpicture}
\caption{Graphs representing the permutations that our simplified strategies from \Cref{sec:warm-up} play for~$n=3$ locations. The non-meeting probability when visiting two vertices taken uniformly at random in the original graph is $\delta=1/3$, and when visiting two different vertices in the new graph is $\frac{n}{n-1}\delta = \frac{1}{2}$.}
\label{fig:derangement-graphs-a}
\end{subfigure}
\hfill
\begin{subfigure}[b]{0.52\textwidth}
\begin{tikzpicture}[every edge quotes/.style = {above=-3pt, font=\footnotesize, sloped}]
\footnotesize
    \tikzset{v/.style={circle,draw,thick,inner sep=0pt,outer sep=0pt, minimum size=0.2cm},every loop/.style={}}
  \matrix[column sep=.3cm,row sep=.7cm]{
    {\draw + (40:1.6) node[label=above right:$34\binom{1}{2}$](1)[v]{} + (80:1.6) node[label=above:$3\binom{1}{2}4$](2)[v]{} + (160:1.6) node[label=left:$\binom{1}{2}34$](3)[v]{} + (200:1.6) node[label=left:$\binom{1}{2}43$](4)[v]{} + (280:1.6) node[label=below:$4\binom{1}{2}3$](5)[v]{} + (320:1.6) node[label=below right:$43\binom{1}{2}$](6)[v]{};
	\foreach \x/\y in {1/3,2/4,3/5,4/6,5/1,6/2,1/6,2/5,3/4} {\draw[-,thick] (\x) to (\y);}} && 
    {\draw + (60:1.3) node[label=above:$3xy$](1)[v]{} + (180:1.3) node[label=below:$\binom{1}{2}xy$](2)[v]{} + (300:1.3) node[label=below:$4xy$](3)[v]{};
	\foreach \x/\y in {1/2,3/2} {\draw[-,thick] (\x) edge["$1/2$"] (\y);}
    \draw[-,thick] (3) edge["$1$"] (1);
    \draw[-,thick] (2) edge[loop above,"$1/2$"] (2);
    }\\};
\end{tikzpicture}
\caption{Graphs representing the permutations that our strategies from \Cref{sec:beating} play for~$n=4$ locations. We write~$\binom{1}{2}$ for the home location of the other player; two permutations meet only if they coincide at another location ($3$ or~$4$). In particular, a self-loop appears because players are not guaranteed to meet when they both visit permutations that start with~$\binom{1}{2}$.
}
\label{fig:derangement-graphs-b}
\end{subfigure}
\caption{Examples of original and contracted derangement graphs. The original graphs contain edges between pairs of permutations that do not meet; the contracted graphs have edges between vertices that represent a set of permutations, and the label corresponds to the non-meeting probability when choosing one permutation from each endpoint uniformly at random.}
\label{fig:derangement-graphs}
\end{figure}
It turns out that lower visibility dominates when~$\ell=1$, but that the contraction pays off already when~$\ell=2$.
\begin{restatable}{proposition}{propSimpleAW}\label{prop:simple-aw}
    Let~$G=(V,E)$ be a vertex-transitive $(m,q)$-clique-partitioned graph, where~$m\geq 3$ and~$q\geq 3$. 
    Then,~$\PP[t_{\Vis_1(\mu)}\leq 1] \leq \PP[t_{\Uni}\leq 1]$ for every~$\mu \in [0,1]$, and~$\PP[t_{\Vis_2(1/m)}\leq r] > \PP[t_{\Uni}\leq r]$ for every~$r\in\NN$ with~$r\geq 2$.
\end{restatable}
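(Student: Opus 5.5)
The plan is to work on the contracted graph, whose vertices are the $m$ cliques, and to keep separate the randomness of clique choices and that of the vertex choices inside cliques. Let $\beta$ be the probability that two players at uniformly random vertices of two \emph{distinct} cliques fail to meet. Each vertex has exactly $\delta_{\mathrm{out}}$ neighbours in every other clique, so $\beta=1-\delta_{\mathrm{out}}/q$. Two uniformly random vertices of $V$ always meet when they lie in the same clique, and they lie in distinct cliques with probability $(m-1)/m$. Hence $\delta=\frac{m-1}{m}\beta$, which is the identity $\beta=\frac{m}{m-1}\delta$ noted above.

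Under $\Uni$ the steps are independent, so $\PP[t_{\Uni}>r]=\delta^r$. Under a clique strategy the within-clique choices are independent across steps, so the probability of not meeting in a round is
\[
\sum_{k}\PP[\text{the players are in distinct cliques in exactly } k \text{ steps and in the same clique otherwise}]\cdot 0+\PP[\text{distinct cliques in all steps}]\cdot\beta^{\ell}.
\]
Only the all-distinct term survives, so it equals $\PP[\text{distinct cliques in all steps}]\,\beta^{\ell}$. The two home cliques are distinct, because players in the same clique at time~$0$ would already have met. Rounds are i.i.d., and after a failed round the situation is the same as at the start, so non-meeting probabilities multiply across rounds.

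For $\ell=1$, write $x=1-\mu$ and let player~$i$ move to a uniformly random non-home clique. The players are in the same clique with probability
\[
f(x)=\frac{2x(1-x)}{m-1}+\frac{x^2(m-2)}{(m-1)^2}.
\]
This is concave in $x$, and its derivative vanishes at $x=(m-1)/m$, where $f=1/m$. So the same-clique probability is at most $1/m$, the non-meeting probability is at least $\frac{m-1}{m}\beta=\delta$, and $\PP[t_{\Vis_1(\mu)}\leq 1]\leq\PP[t_{\Uni}\leq 1]$. As a by-product, with $\mu=1/m$ the first step of a round has non-meeting probability exactly $\delta$.

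For $\ell=2$ and $\mu=1/m$, I will compute $P_2:=\PP[\text{distinct cliques in both steps}]$ by cases on staying and moving.
\begin{enumerate}
\item Both players stay, with probability $1/m^2$: the cliques are always distinct.
\item Exactly one player moves, with probability $2(m-1)/m^2$: the mover visits two distinct non-home cliques and avoids the other player's home with probability $\frac{m-3}{m-1}$.
\item Both players move, with probability $(m-1)^2/m^2$: player~$1$ (home $A$) picks an ordered pair $(x_1,x_2)$ of distinct cliques other than $A$, and player~$2$ (home $B$) picks $(y_1,y_2)$ avoiding $B$. I will count the pairs with $x_1\neq y_1$ and $x_2\neq y_2$ by inclusion--exclusion, splitting on whether the shared clique is $A$, $B$, or a third clique. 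This is a small shifted-derangement count in the spirit of \Cref{lem:shifted-derangements}.
\end{enumerate}

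The key inequality is $P_2<\bigl(\frac{m-1}{m}\bigr)^2$, equivalently $P_2\beta^2<\delta^2$. Granting it, $r=2k$ steps give non-meeting probability $(P_2\beta^2)^k<\delta^{2k}$. For $r=2k+1$ the extra step is the first step of a new round, contributing exactly $\delta$, so the non-meeting probability is $(P_2\beta^2)^k\delta<\delta^{2k+1}$ for $k\geq1$. This gives $\PP[t_{\Vis_2(1/m)}\leq r]>\PP[t_{\Uni}\leq r]$ for all $r\geq2$.

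I expect the main obstacle to be the exact count in the both-move case and then verifying the polynomial inequality $m^2P_2<(m-1)^2$ for all $m\geq3$. The assumption $q\geq3$ matters only because $\ell\leq q-1$ is needed for the strategy to be defined. I would first clear denominators, reduce the difference to a polynomial in $m$, and check its sign directly. If that is messy, I would instead compare with the one-step computation: conditioning on distinct cliques at step~$1$ should strictly raise the same-clique probability at step~$2$, because each player's step-$2$ clique avoids her own step-$1$ clique.
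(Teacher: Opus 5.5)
Your proposal is correct and follows the paper's proof essentially step for step: the same reduction to the contracted graph with $\beta=\frac{m}{m-1}\delta$, the same case split on the number of moving players (your concavity argument is the paper's $(m\mu-1)^2\le 0$ written differently), and the same restart argument using that the first step of a round under $\Vis_2(1/m)$ fails with probability exactly $\delta$. The both-move count you leave open comes out to $\frac{m^3-6m^2+14m-13}{(m-1)^2(m-2)}$, which gives $m^2P_2=(m-1)^2-\frac{1}{m-2}$ and hence the strict inequality for all $m\ge 3$; the paper obtains this count by conditioning on who visits whose home clique in step~1 rather than by inclusion--exclusion (in which the shared clique can only be a third clique, never $A$ or $B$).
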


We prove this proposition in \Cref{app:prop:simple-aw} by showing that~$\PP[t_{\Vis_1(\mu)}\leq 1] \leq \delta$ for every~$\mu \in [0,1]$ and that~$\PP[t_{\Vis_2(1/m)}\leq 2] > \delta^2$.
We determine the values of these expressions as a function of the parameter~$\mu$ by calculating the probabilities that the players do not visit the same clique in the first or first two steps, conditioning on the number of moving players, and applying the law of total probability.

\Cref{prop:simple-aw} implies immediately that~$\SCFK(\theta)$ improves over~$\SAW(\theta)$ in terms of meeting probabilities after three rounds.
Indeed, when moving for three consecutive rounds,~$\SAW(\theta)$ takes all permutations uniformly at random, while~$\SCFK(\theta)$ takes the second and third permutations according to~$\Vis_2(1/m)$, with~$m=n$ cliques of size~$q=(n-1)!$ defined by fixing the first element of each permutation. 
Since these cliques have only size~$2$ when the number of locations is~$n=3$, and it is not possible to partition the associated visibility graph into~$m\geq 3$ cliques of size~$q\geq 3$, the improvement applies for~$n\geq 4$. 
\begin{corollary}
    For every~$n\in \NN$ with~$n\geq 4$, every~$\theta\in [0,1)$, and every~$r\in \NN$ with~$r\geq 3$,
    \[
        \PP[t_{\SCFK(\theta)}\leq rn] > \PP[t_{\SAW(\theta)}\leq rn].
    \]
\end{corollary}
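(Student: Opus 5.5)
The plan is to compare the two strategies block by block, using that both $\SAW(\theta)$ and $\SCFK(\theta)$ restart every three rounds. Within a block, the only place they differ is when both players move in all three rounds, and that case is exactly the comparison in \Cref{prop:simple-aw}.

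First I would simplify the event of not meeting. Under both strategies the stay/move indicators $\chi^i_r$ have the same independent distribution. A moving player visits every location of $[n]$ during a round, so a round in which exactly one player moves guarantees rendezvous. A round in which both stay never produces a meeting. A round in which both move fails to produce a meeting iff the two permutations are derangements. Hence, for every fixed pattern of stay/move choices, the conditional non-meeting probability up to round $r$ is either $0$ (some round has exactly one mover) or the probability that all jointly-moving rounds produce pairs of derangements. This joint-moving part factorises over blocks $\{3k-2,3k-1,3k\}$, because permutations in different blocks are independent under both strategies.

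Next I would check that the two strategies agree on every block in which not both players move in all three rounds. In such a block, some player does not move in all three rounds, so the third permutation of that player is uniform by definition. It remains to compare the two strategies when one player moves in all three rounds and the other does not. The jointly-moving rounds then form a proper subset of the three, involving at most two permutations of the all-moving player. The marginal distribution of any one or two of that player's permutations in the block is uniform and independent. For a pair including the third permutation, this holds because conditioning on the first elements of rounds one and two and averaging yields a uniform first element for round three, by symmetry of the rule, with ``same'' having probability $1/n$. So all non-meeting probabilities coincide with those under $\SAW$, and the same is true of partial blocks beyond the last full one.

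The key step is the block where both players move in all three rounds. Here I identify the block with $\Vis_2(1/m)$ on the complement of the derangement graph, with $m=n$ cliques of size $q=(n-1)!$ given by fixing the first entry. Round one plays the role of step $0$: independent uniform vertices, the player's home clique being the clique of her first permutation, and conditioning on not meeting in round one is exactly the conditioning defining $t$. Rounds two and three match $\Vis_2(1/n)$. With probability $1/n$ the second permutation lies in the home clique, and the third is then uniform in that same clique, which is ``staying''. Otherwise the second lies in a uniformly random non-home clique and the third in a third distinct clique, which is ``moving''. Within each clique the permutation is uniform, as required. Thus the block's non-meeting probability is $\delta\cdot\PP[t_{\Vis_2(1/n)}>2]<\delta\cdot\delta^2=\delta^3$, the latter being the $\SAW$ value. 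The strict inequality follows from \Cref{prop:simple-aw}, whose hypotheses $m,q\ge 3$ hold for $n\ge4$.

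Finally, I would sum over stay/move patterns. For $r\ge 3$ the first block exists, the all-move pattern for both players on it has probability $(1-\theta)^6>0$ when $\theta<1$, and every other factor is equal or weakly smaller. This gives strict inequality for non-meeting up to round $r$, i.e.\ up to step $rn$. The main obstacle I expect is justifying rigorously the identification with $\Vis_2(1/n)$ (matching home clique and step-$0$ conditioning) and the uniform-marginal claim in the mixed block.
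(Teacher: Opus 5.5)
Your proposal is correct and follows the paper's route. On a block where both players move in all three rounds, $\SCFK$ plays the second and third permutations according to $\Vis_2(1/n)$, with cliques given by the first entry and the round-one permutation acting as step~$0$; \Cref{prop:simple-aw} then gives the strict block inequality, and the decomposition over stay/move patterns and independent three-round blocks extends it to all $r\geq 3$.

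One simplification: in the mixed block, where one player moves in all three rounds and the other does not, some round has exactly one mover. Rendezvous is therefore certain under both strategies, so your uniform-marginal argument (which you flagged as an obstacle) is not needed.
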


$\SCFK$ does not improve over~$\AW$, even in terms of meeting probabilities. This is due to the fact that~$\SCFK$ visits the home location even in moving rounds and thus uses rounds of length~$n$ compared to length~$n-1$ for~$\AW$. We will resolve this issue by returning to~$\CFK$, and will also obtain an improvement of the meeting time. The analysis will become much more involved.

\section{Beating the Anderson--Weber Strategy}\label{sec:beating}

We will now show our main result, that~$\CFK$ has a strictly smaller expected meeting time than~$\AW$ for any~$n\geq 4$.
The analysis relies heavily on an intermediate result that we have made plausible in \Cref{sec:warm-up}: for~$n\geq 5$,~$\CFK$ has a higher probability of meeting than~$\AW$ after~$r$ rounds for any~$r\geq 3$. 
To see why this is useful and sketch the general proof strategy, we observe that for a strategy $\sigma\in \{\AW(\theta),\CFK(\theta)\}$, the expected meeting time can be written as
\begin{align*}
    \EE[t_{\sigma}] & = \EE\big[t_{\sigma}\;\big\vert\; \meet^{\leq 3}_\sigma\big] \PP\big[\meet^{\leq 3}_\sigma\big] + (3(n-1)+\EE[t_\sigma])\big(1-\PP\big[\meet^{\leq 3}_\sigma\big]\big),
\intertext{because both strategies restart after $3(n-1)$ steps. Therefore,} 
    \EE[t_{\sigma}] & = \EE\big[t_{\sigma}\;\big\vert\; \meet^{\leq 3}_\sigma\big] + 3(n-1)\bigg(\frac{1}{\PP\big[\meet^{\leq 3}_\sigma\big]}-1\bigg),
\end{align*}
so that $\EE[t_{\sigma}]$ is decreasing in $\PP\big[\meet^{\leq 3}_\sigma\big]$ and increasing in $\EE\big[t_{\sigma}\;\big\vert\; \meet^{\leq 3}_\sigma\big]$.
We show in \Cref{subsec:meeting-prob} that~$\PP\big[\meet^{\leq 3}_\sigma\big]$ is greater for $\CFK$ than for $\AW$, and in \Cref{subsec:meeting-time} that $\EE\big[t_{\sigma}\;\big\vert\; \meet^{\leq 3}_\sigma\big]$ is smaller for $\CFK$ than for $\AW$.

Before we start the technical analysis, let us briefly explain the intuition behind our improvement over~$\AW$ in light of the analysis in \Cref{sec:warm-up}.
Like its simplified version from \Cref{sec:warm-up},~$\CFK$ correlates consecutive permutations in a way that emulates an Anderson--Weber-like strategy on the complement of the derangement graph.
Specifically, for three consecutive permutations, it chooses the first two uniformly at random, and the third uniformly at random subject to the constraint that the three permutations must start with the same location or three distinct locations.
However, to actually improve over~$\AW$, it will be necessary as in~$\AW$ to use a home location in which a player stays in staying rounds and which can safely be excluded from moving rounds.
The existence of the home location considerably complicates the analysis, since now the permutations players choose have length~$n-1$ and differ in one element. As a consequence the derangement graph is no longer symmetric; it is now a graph with the same set of vertices as the derangement graph on~$n-1$ locations, but with additional edges corresponding to players not meeting when visiting each other's home location.
This is illustrated in \Cref{fig:derangement-graphs-b}.

In the following sections, we focus our analysis on the first three rounds and exploit the fact that both of the strategies we analyze restart every~$3(n-1)$ steps.
We need some additional notation.
Let~$\sigma\in\{\AW(\theta),\CFK(\theta)\}$, and let~$\chi^i_{\sigma,r}\in \{\stay,\move\}$ denote the (random) decision of player~$i\in\{1,2\}$ in strategy~$\sigma$ to stay or move in the~$r$th round.
In a slight abuse of notation, we will write~$\chi^i_\sigma\in \{\stay,\move\}^2$ or~$\chi^i_\sigma\in \{\stay,\move\}^3$ for the decisions of player~$i\in \{1,2\}$ in the first two or first three rounds, and compress such~$2$- or~$3$-dimensional vectors with a repeated component to a single letter with the length as superscript; for example, we will use~$\chi^i_\sigma=\move^2$ to denote that player~$i$ moves in the first two rounds.
For~$i\in \{1,2\}$ and~$r\in [3]$, we let~$\pi^i_{\sigma,r}$ denote the (random) permutation in~$\calP([n]\setminus \{i\})$ that player~$i$ takes in round~$r$ under strategy~$\sigma$ if~$\chi^i_{\sigma,r}=\move$. 
We will write~$\meet^r_{\sigma} \coloneqq [(r-1)(n-1) < t_\sigma \leq r(n-1)]$ and~$\meet^{\leq r}_{\sigma} \coloneqq [t_\sigma \leq r(n-1)]$ for the events that under strategy~$\sigma$ the players meet for the first time \emph{during round~$r$} and \emph{up to round~$r$}, respectively.
A bar above~$\meet$ will be used to denote negation, so that in particular~$\bar{\meet}^{\leq r}_{\sigma}$ means that the players have \emph{not} met up to round~$r$.
Finally, the following events will be useful for the analysis of~$\CFK(\theta)$:
\begin{align*}
    A^i_{1} & \coloneqq \big[\pi^i_{\CFK(\theta),1}(1)=\pi^i_{\CFK(\theta),2}(1)=3-i\big],\\
    A^i_{2} & \coloneqq \big[\pi^i_{\CFK(\theta),1}(1)=\pi^i_{\CFK(\theta),2}(1)\neq 3-i\big],\\
    A^i_{3} & \coloneqq \big[\pi^i_{\CFK(\theta),1}(1)\neq \pi^i_{\CFK(\theta),2}(1),\ 3-i \in \{\pi^i_{\CFK(\theta),1}(1),\pi^i_{\CFK(\theta),2}(1),\pi^i_{\CFK(\theta),3}(1)\}\big],\\
    A^i_{4} & \coloneqq \big[\pi^i_{\CFK(\theta),1}(1)\neq \pi^i_{\CFK(\theta),2}(1),\ 3-i \notin \{\pi^i_{\CFK(\theta),1}(1),\pi^i_{\CFK(\theta),2}(1),\pi^i_{\CFK(\theta),3}(1)\}\big].
\end{align*}
Events~$A^i_{1}$ and~$A^i_{2}$ occur if player~$i$ starts the three tours at the same location, and if this location is or is not the home location of the other player, respectively.
Similarly,~$A^i_{3}$ and~$A^i_{4}$ occur if player~$i$ starts the three tours at different locations, and one of them or none of them is the home location of the other player, respectively.\footnote{We have defined the events using only~$\smash{\pi^i_{\CFK(\theta),1}(1)}$ and~$\smash{\pi^i_{\CFK(\theta),2}(1)}$ for brevity, but by definition of~$\CFK(\theta)$, $\smash{\pi^i_{\CFK(\theta),3}(1)}$ will be the same as these or different from both of these depending on whether they are the same or differ.} 
We finally define~$w^i_{k} \coloneqq \PP[A^i_{k}  \mid \chi^1_{\CFK(\theta)}\!=\!\chi^2_{\CFK(\theta)}\!=\!\move^3]$, for~$i\in \{1,2\}$ and~$k\in [4]$, to be the probability of~$A^i_{k}$ under the condition that both players move in all of the first three rounds. These probabilities are independent of~$\theta$ and are given by the following lemma. The simple proof is deferred to \Cref{app:lem:probs-first-locs}.
\begin{restatable}{lemma}{lemProbsFirstLocs}\label{lem:probs-first-locs}
    For~$n\in \NN$ with~$n\geq 4$, and~$i\in \{1,2\}$,
    \[
        w^i_{1} = \frac{1}{(n-1)^2},\quad
        w^i_{2} = \frac{n-2}{(n-1)^2},\quad
        w^i_{3} = \frac{3(n-2)}{(n-1)^2},\quad
        w^i_{4} = \frac{(n-2)(n-4)}{(n-1)^2}.
    \]
\end{restatable}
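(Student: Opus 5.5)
Conditional on both players moving in the first three rounds, the three tours of player~$i$ are governed only by the rule of $\CFK(\theta)$. The first locations $\pi^i_{\CFK(\theta),1}(1)$ and $\pi^i_{\CFK(\theta),2}(1)$ are independent and uniform on $[n]\setminus\{i\}$, a set of size $n-1$. The third first location is then forced to equal the common value or to be distinct from both. The plan is therefore to compute each $w^i_k$ directly from the laws of the first two first locations, plus the conditional law of the third.

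For $A^i_1$, both first locations must equal $3-i$, which lies in $[n]\setminus\{i\}$. This has probability $\frac{1}{(n-1)^2}$. For $A^i_2$, both first locations must coincide at one of the remaining $n-2$ locations, which gives $\frac{n-2}{(n-1)^2}$. In both cases the third tour starts at the same location by definition, so it adds no further condition.

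For the distinct case, first note that the two first locations differ with probability $\frac{(n-1)(n-2)}{(n-1)^2}=\frac{n-2}{n-1}$. Given that they differ, I would split $A^i_3$ into two disjoint parts, according to whether $3-i$ is among the first two starts or is the third start.

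\textbf{First part: $3-i$ is among the first two starts.} There are $2(n-2)$ ordered pairs of distinct elements that contain $3-i$, out of $(n-1)^2$ ordered pairs in total. This contributes $\frac{2(n-2)}{(n-1)^2}$.

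\textbf{Second part: $3-i$ is the third start.} The first two starts must avoid $3-i$, which gives $(n-2)(n-3)$ ordered pairs. The third start is then uniform on the $n-3$ remaining locations, since the constrained set of permutations is equally split by first element. So it equals $3-i$ with probability $\frac{1}{n-3}$, contributing $\frac{(n-2)(n-3)}{(n-1)^2}\cdot\frac{1}{n-3}=\frac{n-2}{(n-1)^2}$. Here $n\ge4$ guarantees $n-3\ge1$, so the third start is well defined.

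Adding the two parts gives $w^i_3=\frac{3(n-2)}{(n-1)^2}$. Finally, $w^i_4$ is the remaining mass: $\frac{(n-2)(n-3)}{(n-1)^2}\cdot\frac{n-4}{n-3}=\frac{(n-2)(n-4)}{(n-1)^2}$. As a check, the four values sum to $\frac{1+(n-2)(1+3+n-4)}{(n-1)^2}=1$.

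The only delicate point is justifying that the third first location is uniform over its allowed set. This holds because $\CFK$ picks a uniform permutation from a set defined only by a constraint on $\pi(1)$, and each allowed first element extends to exactly $(n-2)!$ permutations. Independence of $\theta$ is immediate because everything is conditioned on $\chi=\move^3$.
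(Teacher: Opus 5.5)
Your proposal is correct and takes essentially the same route as the paper: you compute each $w^i_k$ directly from the independent uniform first locations of the first two tours and the uniform conditional law of the third. The only cosmetic difference is that you obtain the conditional factor $\frac{3}{n-1}$ in $w^i_3$ by splitting on whether $3-i$ is among the first two starts or is the third. This makes explicit a symmetry step that the paper simply asserts.
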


\subsection{Meeting Probabilities}\label{subsec:meeting-prob}

We begin by looking at meeting probabilities, and we will see that~$\CFK$ provides an improvement over~$\AW$ for~$n\geq 5$ but not for~$n=4$.
The following is our result. 
\begin{restatable}{theorem}{thmMeetingProb}\label{thm:meeting-prob}
    For every~$n\in \NN$ with~$n\geq 5$ and every~$\theta\in [0,1)$, it holds 
    \[
        \PP[t_{\CFK(\theta)}\leq 3(n-1)] \geq \PP[t_{\AW(\theta)}\leq3(n-1)] + \frac{967(1-\theta)^6}{(n-1)^9}.
    \]
    For~$n=4$ and every~$\theta\in [0,1)$,~$\PP[t_{\CFK(\theta)}\leq 3(n-1)] = \PP[t_{\AW(\theta)}\leq 3(n-1)]$.
\end{restatable}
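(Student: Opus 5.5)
The two strategies agree on the stay/move decisions, which are independent across players and rounds. They also agree on the permutations in the first two rounds. They differ only in the third-round permutation, and only for a player who moves in all three rounds. I will therefore condition on $(\chi^1,\chi^2)\in\{\stay,\move\}^3\times\{\stay,\move\}^3$ and argue case by case.

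If some player stays in one of the first three rounds, the two strategies coincide in law, except possibly through the third permutation of the other player. Suppose player~$2$ does not move in all three rounds while player~$1$ does. Then player~$1$'s third permutation under $\CFK$ is uniform conditional on the first two, but player~$2$'s trajectory is independent of player~$1$'s. The only relevant question is whether player~$1$'s third tour is uniform marginally given her first two tours fail to meet player~$2$'s. Failing to meet a staying player means her home location is avoided in the tour, which is impossible in a moving round. Failing to meet a moving player is a condition on the whole permutations. So I expect some care is needed here. The cleanest route is to note that when player~$2$ stays in round~$3$, meeting in round~$3$ happens with probability~$1$ regardless. When player~$2$ moves in round~$3$ but stayed earlier, her third permutation is uniform and independent of everything, so by symmetry of the uniform distribution the meeting probability is $1-\hat d^1_{n-1}$ conditional on anything about player~$1$. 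Hence all non-$\move^3\times\move^3$ cases give equal probabilities, and the difference is $(1-\theta)^6$ times the difference conditional on both players moving in all three rounds.

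In the main case both players move in all three rounds. The non-meeting probability under $\AW$ is $(\hat d^1_{n-1})^3$, since one component of the shift is the other player's home location, as discussed in \Cref{subsec:derangements}. For $\CFK$ I will condition on the events $A^1_k\cap A^2_{k'}$, using the weights $w^i_k$ of \Cref{lem:probs-first-locs} and independence between the players. Given these events, the first entries of the three pairs of tours are determined up to symmetric randomness, and the tails are uniform permutations of the remaining locations. If both players start all three tours at the same non-home location, they meet immediately. If one player repeatedly starts at the other's home location, the first position cannot produce a meeting, and the remaining $n-2$ positions form shifted derangement problems with shift $0$, $1$ or $2$. This gives non-meeting probabilities that are products over the three rounds of terms such as $\hat d^0_{n-2},\hat d^1_{n-2},\hat d^2_{n-2}$, weighted by the probabilities that the first entries differ or coincide. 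Doing the same for $\AW$, by conditioning on first entries with the product distribution, I will write the difference as an explicit rational combination of $\hat d^k_{n-2}$ for $k\in\{0,1,2\}$. Euler's recursion $d^k_n=d^{k+1}_n-d^k_{n-1}$ lets me rewrite everything in terms of a few quantities.

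For $n=4$ I will plug in the exact table values ($d^k_2$ and $d^k_3$) and expect exact cancellation. For $n\ge5$ I will use \Cref{lem:bounds} to replace each $d^k_m$ by $(\text{polynomial in factorials})/e$ plus an error in $[\epsilon-1,\epsilon]$. The leading $1/e$ terms should then give a positive polynomial expression of order $(n-1)^{-9}$ after normalizing, with the error terms controlled by $1/(n-2)!$ factors. The main obstacle is this last step. Enumerating the sixteen combinations $(k,k')$ correctly, including the overlap structure of first entries when the two players' initial locations coincide or hit the other's home, is error-prone. The lower bound $967/(n-1)^9$ needs a uniform estimate: an explicit computation for small $n$ (say $n\le 8$) using exact $d^k_m$, plus asymptotic bounds via \Cref{lem:bounds} for larger $n$.
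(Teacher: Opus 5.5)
Your plan is correct and is essentially the paper's proof. It reduces to the case $\chi^1=\chi^2=\move^3$: a mover always meets a stayer, and otherwise the two strategies coincide in law. It then conditions on $A^1_k\cap A^2_\ell$ with the weights of \Cref{lem:probs-first-locs} to get an explicit cubic in $\hat d^0_{n-2},\hat d^1_{n-2},\hat d^2_{n-2}$ (\Cref{lem:cond-prob-formula}). Finally it compares this with $(\hat d^1_{n-1})^3$, by exact evaluation for $n\le 7$ and by \Cref{lem:bounds} plus a monotonicity check of the error terms for $n\ge 8$ (\Cref{lem:improvement-prob}).

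There are two harmless slips. Under $\CFK$ the third tour is uniform only marginally, not conditionally on the first two; your final argument does not use this. Also, the leading $e^{-3}$ part of the difference is actually of order $n^{-3}$, so the bound $967/(n-1)^9$ is a deliberately crude weakening rather than the natural order.
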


$\CFK(\theta)$ repeats every~$3$ rounds and coincides with~$\AW(\theta)$ in all rounds that are not multiples of~$3$. In addition, the strategy restarts independently after each block of three rounds, conditional on not having met. Thus, we have for every~$n\in \NN$ with~$n\geq 5$,~$\theta\in [0,1)$,~$k\in \NN$, and~$r\in \{0,1,2\}$ that
\[
    \PP\big[\bar{\meet}^{\leq 3k+r}_{\CFK(\theta)}\big] = \PP\big[\bar{\meet}^{\leq 3}_{\CFK(\theta)}\big]^k \PP\big[\bar{\meet}^{1}_{\AW(\theta)}\big]^r < \PP\big[\bar{\meet}^{\leq 3}_{\AW(\theta)}\big]^k \PP\big[\bar{\meet}^{1}_{\AW(\theta)}\big]^r = \PP\big[\bar{\meet}^{\leq 3k+r}_{\AW(\theta)}\big],
\]
where the inequality holds by \Cref{thm:meeting-prob}. 
We thus obtain the following corollary.
\begin{corollary}
    For every~$n,r\in \NN$ with~$n\geq 5$ and~$r\geq 3$, and~$\theta\in [0,1)$,
    \[
        \PP[t_{\CFK(\theta)}\leq r(n-1)] > \PP[t_{\AW(\theta)}\leq r(n-1)].
    \]
\end{corollary}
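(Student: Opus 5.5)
The plan is to reduce the statement to \Cref{thm:meeting-prob} through the block structure of both strategies, working with non-meeting probabilities. Write $r=3k+s$ with $k\in\NN$, $k\geq 1$ (possible since $r\geq 3$) and $s\in\{0,1,2\}$. Each strategy $\sigma\in\{\AW(\theta),\CFK(\theta)\}$ decomposes into consecutive blocks of three rounds. Within each block:
\begin{itemize}
\item the stay/move decisions $\chi^i_{\sigma,\cdot}$ are i.i.d.\ across rounds;
\item the permutations are drawn according to the same rule as in rounds $1,2,3$;
\item the rule uses no information from earlier blocks.
\end{itemize}
For $\CFK(\theta)$ this holds because the conditioning in round $3j$ refers only to $\chi^i_{3j-2},\chi^i_{3j-1},\pi^i_{3j-2},\pi^i_{3j-1}$, which lie in the same block. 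Moreover, whether the players meet during a round depends only on that round's decisions and permutations.

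First, I will establish that the event of not meeting in a block is independent of what happens in other blocks, and has the same probability for every block. Together with the independence of the players and of the relabeling, this gives
\[
\PP\big[\bar{\meet}^{\leq 3k+s}_{\sigma}\big]=\PP\big[\bar{\meet}^{\leq 3}_{\sigma}\big]^k\,\PP\big[\bar{\meet}^{\leq s}_{\sigma}\big].
\]
In any partial block with $s\leq 2$ rounds, $\CFK(\theta)$ draws its permutations independently and uniformly, exactly as $\AW(\theta)$ does. Hence $\PP[\bar{\meet}^{\leq s}_{\CFK(\theta)}]=\PP[\bar{\meet}^{\leq s}_{\AW(\theta)}]=\PP[\bar{\meet}^{1}_{\AW(\theta)}]^s$.

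Second, I apply \Cref{thm:meeting-prob}. Since $\theta<1$, the bonus term $967(1-\theta)^6/(n-1)^9$ is strictly positive, so
\[
0\leq a\coloneqq \PP\big[\bar{\meet}^{\leq 3}_{\CFK(\theta)}\big] < b\coloneqq \PP\big[\bar{\meet}^{\leq 3}_{\AW(\theta)}\big].
\]
For $k\geq 1$ this gives $a^k<b^k$.

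Third, to keep the strict inequality after multiplying by $c\coloneqq \PP[\bar{\meet}^{1}_{\AW(\theta)}]^s$, I need $c>0$. It suffices that a single round has positive non-meeting probability. One way is that both players stay, which has probability $\theta^2$. Alternatively, both move with permutations that are shifted derangements, and by \Cref{lem:shifted-derangements} this has positive probability since $d^1_{n-1}>0$ for $n\geq 5$ (cf.\ \Cref{lem:bounds}). Hence $a^kc<b^kc$, and taking complements gives the claim.

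The only step requiring care is the factorization across blocks, specifically verifying that the correlation in $\CFK(\theta)$ never crosses block boundaries and that conditioning on non-meeting does not bias the next block. The second point follows because blocks are mutually independent and the non-meeting events are measurable with respect to their own blocks.
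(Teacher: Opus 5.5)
Your proposal is correct and follows essentially the same route as the paper. The paper also factorizes the non-meeting probability over independent three-round blocks, uses that the leftover rounds coincide with $\AW(\theta)$, and applies \Cref{thm:meeting-prob} blockwise. Your explicit check that the leftover factor $\PP[\bar{\meet}^{1}_{\AW(\theta)}]^s$ is strictly positive supplies a detail the paper leaves implicit when it keeps the inequality strict.
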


One of the key ingredients of the proof of \Cref{thm:meeting-prob} is the following lemma, which gives an explicit expression for the non-meeting probability under our strategy up to time~$3(n-1)$, under the condition that both players move for the first three rounds. We also give the exact value for~$n\in \{4,5,6,7\}$. 
\begin{restatable}{lemma}{lemCondProb}\label{lem:cond-prob-formula}
    For every~$n\in \NN$ with~$n\geq 4$, and~$\theta\in [0,1)$,
    \begin{align*}
    \PP\big[\bar{\meet}^{\leq 3}_{\CFK(\theta)} \;\big\vert\; \chi^1_{\CFK(\theta)}\!=\!\chi^2_{\CFK(\theta)}\!=\!\move^3 \big]& = \\
    \frac{1}{(n-1)^4(n-3)} \Big(&(n-3)(\hat{d}^0_{n-2})^3 + 6(n-2)(n-3)\hat{d}^0_{n-2} (\hat{d}^1_{n-2})^2 \\[-5pt]
    & + 3(n-2)(n^2-7n+13) \hat{d}^0_{n-2} (\hat{d}^2_{n-2})^2 \\
    & + 2(n-2)(n-3)^2(\hat{d}^1_{n-2})^3 + 6(n-2)(n-3)^2(\hat{d}^1_{n-2})^2 \hat{d}^2_{n-2} \\
    & + 6(n-2)(n-4)(n^2-6n+10)\hat{d}^1_{n-2} (\hat{d}^2_{n-2})^2 \\
    & + (n^5-16n^4+103n^3-335n^2+551n-362) (\hat{d}^2_{n-2})^3 \Big).
    \end{align*}
    In particular, this value equals~$\frac{1}{8}$ for~$n=4$,~$\frac{5}{54}$ for~$n=5$,~$\frac{439471}{5184000}$ for~$n=6$, and~$\frac{406417}{5184000}$ for~$n=7$.
\end{restatable}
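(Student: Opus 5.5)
Conditioned on both players moving in all three rounds, the three rounds are conditionally independent given the first locations $a_r=\pi^1_r(1)$ and $b_r=\pi^2_r(1)$. The plan is to condition on these first entries, $(a_1,a_2,a_3)$ for player~1 and $(b_1,b_2,b_3)$ for player~2. Given them, each remaining tour $\pi^i_r$ restricted to positions $2,\dots,n-1$ is a uniformly random permutation of $[n]\setminus\{i,a_r\}$ (resp.\ $\{i,b_r\}$), independently across rounds and players. This holds because $\CFK$ only constrains first entries.

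In round $r$ the players fail to meet iff $a_r\neq b_r$ and the two tails are derangements of each other. Both tails are permutations of $(n-2)$-element sets. Call an element \emph{useless} if it lies in exactly one of the two sets, since the players can never meet there. The number $k$ of useless elements on each side determines the conditional non-meeting probability as $\hat d^k_{n-2}$ by \Cref{lem:shifted-derangements}: fix one tail as the identity and relabel so that the other set is a $k$-shift. Concretely, $\{2\}\cup\{a_r\}$ minus $\{1\}\cup\{b_r\}$ is the set of player~1's useless elements, with $a_r\ne b_r$ and $a_r,b_r$ never equal to the player's own home. The case analysis is:
\begin{itemize}
\item $k=0$ iff $a_r=1$ and $b_r=2$ (each visits the other's home first);
\item $k=1$ iff exactly one of $a_r=1$, $b_r=2$ holds;
\item $k=2$ otherwise, when $a_r,b_r\in\{3,\dots,n\}$ are distinct.
\end{itemize}
Hence the conditional non-meeting probability is $\prod_{r=1}^3 \hat d^{k_r}_{n-2}\,\mathbf 1[a_r\neq b_r]$.

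It then remains to sum over the joint distribution of first locations. For each player, the triple is all-equal with probability $1/(n-1)^2$ per common value, or all-distinct, uniform over ordered triples. This matches \Cref{lem:probs-first-locs}: all-distinct ordered triples each have probability $\frac{n-2}{(n-1)^2}\cdot\frac{1}{(n-2)(n-3)}=\frac{1}{(n-1)^2(n-3)}$, which explains the factor $\frac{1}{(n-1)^4(n-3)}$. The enumeration splits by the types of the two players ($A^i_1,\dots,A^i_4$ for each). Within each pair of types, we count the pairs of triples with $a_r\neq b_r$ for all $r$, grouped by the multiset $\{k_1,k_2,k_3\}$. For example:
\begin{itemize}
\item If both are in $A_1$ (constant at each other's home), we get $(\hat d^0)^3$ with weight $1\cdot1$.
\item If player~1 is in $A_1$ and player~2 is in $A_2$ (constant $c\ge3$, $c\ne$ player~1's value, which is automatic), we get $(\hat d^1)^3$.
\item If player~1 is in $A_3$ and player~2 is in $A_4$, we must count triples $(b_r)$ of distinct elements of $\{1,3,\dots,n\}\setminus\{\}$ avoiding $a_r$ coordinatewise, recording where $a_r=2$ gives a $k=1$ versus $k=2$ factor.
\end{itemize}
These counts are inclusion–exclusion counts of ordered triples avoiding a forbidden value coordinatewise, i.e.\ small rook-type counts. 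They yield the polynomial coefficients such as $3(n-2)(n^2-7n+13)$ and $6(n-2)(n-4)(n^2-6n+10)$. Collecting the monomials $\hat d^{0}(\hat d^1)^2$, $\hat d^0(\hat d^2)^2$, and so on gives the formula.

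Note that monomials like $(\hat d^0)^2\cdot$ or $\hat d^0\hat d^1\hat d^2$ must vanish. For instance, $k=0$ twice requires $a_r=1,b_r=2$ in two rounds, which forces both players to be constant. Checking this gives a consistency test.

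The main obstacle is this bookkeeping in the all-distinct versus all-distinct case: counting pairs of injective triples over overlapping ground sets with the constraint $a_r\neq b_r$ while tracking the positions of $1$ and $2$. I would handle it by first conditioning on whether and where $2$ occurs in $(a_r)$ and $1$ occurs in $(b_r)$ (at most once each, in positions $p,q$), and then counting the derangement-type completions of the remaining coordinates on $\{3,\dots,n\}$ by inclusion–exclusion. As a sanity check, summing all weights with every $\hat d$ replaced by $1$ must give the probability $\PP[a_r\neq b_r\ \forall r]$. Finally, the numerical values for $n=4,\dots,7$ follow by substituting $\hat d^k_{n-2}$ from Table~\ref{tab:euler} (e.g.\ for $n=4$: $\hat d^0_2=\tfrac12,\hat d^1_2=\tfrac12,\hat d^2_2=1$) and simplifying.
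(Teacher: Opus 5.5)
Your proposal is correct and follows the paper's proof essentially step for step. The paper also conditions on the type events $A^1_k,A^2_\ell$ with weights from \Cref{lem:probs-first-locs}. It assigns each round without a first-step coincidence the factor $\hat d^{k_r}_{n-2}$ via \Cref{lem:shifted-derangements}, according to how many players start at the other's home. It then carries out exactly the coincidence-avoidance counts you defer, as its entries $Z_{k\ell}$, with the $A_3$--$A_3$ case split on whether the two home-visits fall in the same round.

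The only correction is notational: your home labels are swapped in the case list and in the useless-element set. The case $k=0$ means $a_r=2,\ b_r=1$, and player~1's useless elements are $\{2,b_r\}\setminus\{1,a_r\}$.
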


Since $\hat{d}_n^k$ converges to $1/e$ as $n\to\infty$ for any fixed $k$, this non-meeting probability approaches $1/e^3$ as $n\to\infty$. This is the same as the corresponding non-meeting probability for $\AW$, so the improvement of $\CFK$ over $\AW$ vanishes as $n\to\infty$. 

We prove \Cref{lem:cond-prob-formula} in \Cref{app:lem:cond-prob-formula}. In the proof we further condition on the events~$A^1_{k}$ and~$A^2_{\ell}$ for~$k,\ell\in [4]$, whose probabilities are given in \Cref{lem:probs-first-locs}.
For each combination of these events, we then compute the non-meeting probabilities in the first three rounds by carefully analyzing the probabilities that (i) the respective first locations of the players coincide in some round, and that (ii) one of the subsequent permutations (of length~$n-2$) has a fixed point under the condition that the first locations do not coincide. 
\Cref{lem:shifted-derangements} provides the key ingredient for the latter.

We are now ready to prove \Cref{thm:meeting-prob}.
\begin{proof}[Proof of \Cref{thm:meeting-prob}]
    Fix~$n\in \NN$ with~$n\geq 4$ and~$\theta\in [0,1)$.
    For~$\sigma\in \{\AW(\theta),\CFK(\theta)\}$,
    \[
        \PP\big[\meet^{\leq 3}_{\sigma}\big] =  \sum_{\tilde{\chi}^1\in \{\stay,\move\}^3}\sum_{\tilde{\chi}^2\in \{\stay,\move\}^3} \PP\big[\meet^{\leq 3}_{\sigma} \;\big\vert\; \chi^1_\sigma\!=\!\tilde{\chi}^1,\ \chi^2_\sigma\!=\!\tilde{\chi}^2 \big] \PP\big[\chi^1_\sigma\!=\!\tilde{\chi}^1,\ \chi^2_\sigma\!=\!\tilde{\chi}^2\big].
    \]
    The terms of the sum on the right-hand side are equal for~$\sigma=\AW(\theta)$ and for~$\sigma=\CFK(\theta)$, with the exception of the one corresponding to~$\tilde{\chi}^1\!=\!\tilde{\chi}^2\!=\!\move^3$.
    Since~$\PP\big[\chi^1_{\sigma}\!=\!\chi^2_{\sigma}\!=\!\move^3\big]=(1-\theta)^6>0$ for~$\sigma\in \{\AW(\theta),\CFK(\theta)\}$, and since~$\PP\big[\meet^{\leq 3}_{\AW(\theta)} \;\big\vert\; \chi^1_{\AW(\theta)}\!=\!\chi^2_{\AW(\theta)}\!=\!\move^3\big] = 1-(\hat{d}^1_{n-1})^3$ by \Cref{lem:shifted-derangements}, we have that
    \begin{align*}
        \PP\big[\meet^{\leq 3}_{\CFK(\theta)}\big] - \PP\big[\meet^{\leq 3}_{\AW(\theta)}\big]
        & = (1-\theta)^6\big( \PP\big[\meet^{\leq 3}_{\CFK(\theta)} \;\big\vert\; \chi^1_{\CFK(\theta)}\!=\!\chi^2_{\CFK(\theta)}\!=\!\move^3 \big] - (1-(\hat{d}^1_{n-1})^3) \big)\\
        & =  (1-\theta)^6\big((\hat{d}^1_{n-1})^3 - \PP\big[\bar{\meet}^{\leq 3}_{\CFK(\theta)} \;\big\vert\; \chi^1_{\CFK(\theta)}\!=\!\chi^2_{\CFK(\theta)}\!=\!\move^3 \big]\big).
    \end{align*}
    The proof is then completed with the following lemma.
    \begin{restatable}{lemma}{lemImprovementProb}\label{lem:improvement-prob}
        For~$n=4$,~$\PP\big[\bar{\meet}^{\leq 3}_{\CFK(\theta)} \;\big\vert\; \chi^1_{\CFK(\theta)}\!=\!\chi^2_{\CFK(\theta)}\!=\!\move^3 \big] = (\hat{d}^1_{n-1})^3$.
        For~$n\in \NN$ with~$n\geq 5$, 
        \[
            (\hat{d}^1_{n-1})^3 - \PP\big[\bar{\meet}^{\leq 3}_{\CFK(\theta)} \;\big\vert\; \chi^1_{\CFK(\theta)}\!=\!\chi^2_{\CFK(\theta)}\!=\!\move^3 \big] \geq \frac{967}{(n-1)^9}.
        \]
    \end{restatable}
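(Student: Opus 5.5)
We take the explicit formula of \Cref{lem:cond-prob-formula} as the starting point and compare it with $(\hat{d}^1_{n-1})^3$. The first step is to express everything in terms of the single sequence $\hat d^0_{n-2}$ and elementary rational functions of $n$.

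\textbf{Reduction to one sequence.} Write $m\coloneqq n-2$. Euler's recurrence gives $d_m^1 = d_m^0 + d_{m-1}^0$ and $d_m^2 = d_m^1 + d_{m-1}^1$. Moreover, by \Cref{lem:shifted-derangements}, $d^1_{m+1}$ decomposes according to the first entry of the permutation:
\[
d^1_{m+1}=d^0_m + m\, d^1_m .
\]
This identity follows from Euler's recurrence together with the classical relation $d^0_{m+1}=m(d^0_m+d^0_{m-1})$. It yields
\[
\hat d^1_{n-1}=\frac{\hat d^0_{n-2}+(n-2)\hat d^1_{n-2}}{n-1},
\]
so the left-hand side $(\hat d^1_{n-1})^3$ becomes a cubic form in $(\hat d^0_{n-2},\hat d^1_{n-2})$ with rational coefficients.

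Combining this with the formula of \Cref{lem:cond-prob-formula}, the difference
\[
\Delta_n\coloneqq(\hat d^1_{n-1})^3-\PP\big[\bar{\meet}^{\leq 3}_{\CFK(\theta)}\mid\cdots\big]
\]
is a homogeneous cubic polynomial $F_n(x,y,z)$ in $x=\hat d^0_{n-2}$, $y=\hat d^1_{n-2}$, $z=\hat d^2_{n-2}$, with coefficients rational in $n$. For $n=4$ the claimed identity is a direct check: the lemma gives $1/8$, and $\hat d^1_3=3/6=1/2$, so $(\hat d^1_3)^3=1/8$.

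\textbf{Exploiting the linear relations.} For $n\ge5$, we also use the relations $y-x=\hat d^0_{m-1}/m$ and $z-y=\hat d^1_{m-1}/m$. Both differences are of order $1/(em)$ and positive, which suggests parametrising
\[
y=x+\alpha,\qquad z=x+\alpha+\beta,\qquad \alpha,\beta\approx\frac{1}{em}.
\]
We then expand $F_n$ in powers of $\alpha,\beta$ around $x$. Because both strategies satisfy $\PP\to e^{-3}$, the constant term $F_n(x,x,x)$ is expected to be small. The low-order terms should carry a leading positive contribution of order $n^{-9}$ after the cancellations enforced by the combinatorics; in the warm-up, the gain came from the $w_1,w_3$-type events, whose weights are $O(n^{-2})$ each, and these multiply small biases.

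\textbf{Main obstacle and fallback.} The hard step is certifying positivity with the explicit constant $967/(n-1)^9$. Heavy cancellation occurs among terms of size $O(1)$, so crude bounds on $\hat d$ will not suffice. The plan is to use \Cref{lem:bounds}, which gives each $d^k_m$ exactly as a floor of $(\text{explicit})/e+\epsilon$. This lets us write $\hat d^k_m=c_k(m)/e+\eta_k$ with $|\eta_k|\le 1/m!$, where $c_k(m)$ is the rational function from \Cref{lem:bounds}. Substituting, $F_n$ becomes
\[
P_3(n)/e^3+P_2(n)/e^2+P_1(n)/e+P_0(n)
\]
plus error terms bounded by $O(1/(n-3)!)$.

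I expect the pure $e^{-3}$ and $e^{-2}$ coefficients to vanish or be nonnegative identically, by polynomial-identity checks, leaving a main term $\ge c/(n-1)^9$. The factorial error terms are then absorbed for $n$ beyond some threshold $n_0$ (say $n_0\approx 12$), since $1/(n-3)!\ll (n-1)^{-9}$ there. The remaining cases $5\le n<n_0$ are verified directly using the exact rational values of $d^k_m$, as the lemma does for $n=5,6,7$. The constant $967$ is presumably the minimum over these finite cases and the asymptotic bound, likely attained at $n=5$: there $(3/6\cdot\ldots)$ computations give $(\hat d^1_4)^3=(11/24)^3$ versus $5/54$, and $(n-1)^9$ times the gap should come out around $967$.
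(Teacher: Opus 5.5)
Your treatment of $n=4$ is correct. The identities $d^1_{m+1}=d^0_m+m\,d^1_m$, $y-x=\hat d^0_{m-1}/m$ and $z-y=\hat d^1_{m-1}/m$ are also correct, and your guess about the constant is right: at $n=5$ the gap is $17/4608$, and $4^9\cdot 17/4608\approx 967.1$. The skeleton (exact values for small $n$, the approximations of \Cref{lem:bounds} for large $n$) is the paper's too.

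The gap is that the large-$n$ step, which is the real content of the lemma, is only an expectation, and that expectation is wrong. After substituting $\hat d^k_m=c_k(m)/e+\eta_k$ into the homogeneous cubic $F_n$, the only terms free of the $\eta_k$ are the $e^{-3}$ terms, namely $F_n(c_0,c_1,c_2)/e^3$. Everything else carries a factor $O(1/(n-2)!)$. So the $e^{-3}$ coefficient \emph{is} the main term and must not vanish. The $e^{-2}$ terms are linear in the $\eta_k$, have no definite sign, and can only be bounded in absolute value, not shown nonnegative by an identity. What you need, and never establish, is that $F_n(c_0,c_1,c_2)>0$ with an explicit lower bound.

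The paper supplies exactly this. It uses one-sided bounds from \Cref{lem:bounds}: a lower bound on $\hat d^1_{n-1}$ and upper bounds on $\hat d^k_{n-2}$, which is valid because all coefficients in \Cref{lem:cond-prob-formula} are nonnegative for $n\ge 5$. This gives a lower bound $\frac{r_1-r_2-r_3-r_4}{8(n-1)^4(n-2)(n-3)^4}$ on the difference. Here $\frac{r_1}{8(n-1)^4(n-2)(n-3)^4}$ is precisely your $F_n(c_0,c_1,c_2)/e^3$, of order $1/(e^3n^3)$ rather than $n^{-9}$, and $r_2,r_3,r_4$ carry the factorial errors. \Cref{claim:monotone-terms} then shows the bracket exceeds $8831$ at $n=8$ and increases from there, and $n=5,6,7$ are computed exactly.

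Your heuristics would lead you astray on the way there. First, $F_n(x,x,x)=x^3\cdot\PP[\text{first locations coincide in some round}]$ is of order $1/n$, not small. At $n=5$ it equals $\tfrac{243}{512}x^3$, almost five times the true gap. An expansion in $\alpha,\beta$ therefore requires cancellations at orders $1/n$ and $1/n^2$ that your plan does not control. Second, absorbing $O(1/(n-3)!)$ errors against a target of $(n-1)^{-9}$ fails near $n_0\approx 12$: for example, $1/9!\approx 2.8\cdot 10^{-6}$ versus $11^{-9}\approx 4.2\cdot 10^{-10}$. The absorption works only because the genuine main term is $\Theta(n^{-3})$, which your plan never identifies.
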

    We prove this lemma in \Cref{app:lem:improvement-prob}. We compute the difference exactly for~$n\in \{4,5,6,7\}$ using the expressions from \Cref{lem:cond-prob-formula}, while for~$n\geq 8$ we bound it using \Cref{lem:bounds}.
\end{proof}

\subsection{Expected Meeting Time}\label{subsec:meeting-time}

We will now show that~$\CFK$ improves on~$\AW$ not just in terms of meeting probability but also in terms of expected meeting time. The following is our main result.
\begin{restatable}{theorem}{thMeetingTime}\label{thm:meeting-time}
For every \(n\geq 4\) and every \(\theta\in[0,1)\), the strategy
\(\CFK(\theta)\) improves on \(\AW(\theta)\). More precisely, for \(n=4\), 
\begin{align*}
    \EE[t_{\AW(\theta)}]-\EE[t_{\CFK(\theta)}]
    &=
    \frac{8(1-\theta)^6}{81(8-(3\theta^2-2\theta+1)^3)}.
\intertext{For every $n\geq 5$,}
    \EE[t_{\AW(\theta)}]-\EE[t_{\CFK(\theta)}]
    &\geq
    \frac{483(1-\theta)^6}{(n-1)^8}.
\end{align*}
\end{restatable}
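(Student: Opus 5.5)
The plan starts from the renewal identity already noted at the beginning of this section. Write $p_\sigma\coloneqq\PP[\meet^{\leq 3}_\sigma]$ and $E_\sigma\coloneqq\EE[t_\sigma\mid \meet^{\leq 3}_\sigma]$, so that $\EE[t_\sigma]=E_\sigma+3(n-1)(1/p_\sigma-1)$. Condition further on the round in which the first meeting happens:
\[
E_\sigma p_\sigma=\sum_{r=1}^{3}\EE\big[t_\sigma\;\big\vert\;\meet^r_\sigma\big]\PP\big[\meet^r_\sigma\big].
\]
Rounds~$1$ and~$2$ of $\CFK$ coincide in distribution with those of $\AW$. Hence the terms for $r\in\{1,2\}$ are identical for both strategies, and the whole difference is carried by the third round. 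Within round~$3$ I will further split according to the stay/move profile $(\chi^1,\chi^2)\in\{\stay,\move\}^3\times\{\stay,\move\}^3$. Only the profile $\move^3,\move^3$ differs between the two strategies. That profile has weight $(1-\theta)^6$, which explains the factor $(1-\theta)^6$ in both bounds.

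For the profile $\move^3,\move^3$ I need \Cref{lem:diff-third-round}, announced in the introduction. It states that $\EE[t\mid \meet^3,\move^3\move^3]$ is smaller under $\CFK$ than under $\AW$, with an explicit gap. I would prove it as outlined there. Condition on the pair of first locations $(\pi^1_3(1),\pi^2_3(1))$, classified as equal, both at the other player's home, one at the other's home, or neither. The conditional expectations within round~$3$ are the same for both strategies (\Cref{lem:cond-expectations}, via \Cref{lem:exp-first-fixed-point}). The conditional probabilities of these classes given $\bar\meet^{\leq2}$ and $\meet^3$ come from \Cref{lem:cond-probabilities} and Bayes' rule, using the weights $w^i_k$ of \Cref{lem:probs-first-locs}. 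The gap then comes from $\CFK$ placing more mass on ``meet at step one'' and ``both at each other's home''.

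\textbf{Case $n=4$.} By \Cref{thm:meeting-prob}, $p_\CFK=p_\AW$, and even $\PP[\meet^3\mid\move^3\move^3]$ agrees. So the $1/p$ term cancels, and
\[
\EE[t_\AW]-\EE[t_\CFK]=\frac{(1-\theta)^6\,\PP[\meet^3\mid\move^3\move^3]\cdot\Delta}{p},
\]
where $\Delta$ is the exact gap in conditional expectation from \Cref{lem:diff-third-round}. For $n=4$ everything is explicit. In particular $\hat d^1_3=1/2$, so the per-round non-meeting probability is $q(\theta)=\theta^2+2\theta(1-\theta)\cdot\frac23+(1-\theta)^2\frac12$, or whatever the exact $n=4$ value is. The claimed denominator $8-(3\theta^2-2\theta+1)^3$ strongly suggests $q=(3\theta^2-2\theta+1)/2$, and then $1-p=q^3$. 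I would simplify $p=1-q^3$ with a computer-algebra check to reach the stated closed form.

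\textbf{Case $n\ge5$.} Now both effects are present. Bound the difference as
\[
3(n-1)\Big(\frac1{p_\AW}-\frac1{p_\CFK}\Big)+(E_\AW-E_\CFK).
\]
The first term is positive, with magnitude at least $3(n-1)(p_\CFK-p_\AW)$, because $p\le1$. By \Cref{thm:meeting-prob} this is at least $3\cdot967(1-\theta)^6/(n-1)^8$.

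The danger is the second term. $E_\CFK$ reweights toward round-$3$ meetings, which occur late, so it may exceed $E_\AW$. I would handle it as the introduction suggests. Write $E_\sigma$ as a mixture of the conditional expectation given meeting in rounds~$1$–$2$ (identical for both strategies) and given meeting in round~$3$. Extra mass on round $3$ costs at most $3(n-1)$ per unit of probability, and this penalty is dominated by the gain in the $1/p$ term. More cleanly, combine both terms: $\EE[t_\sigma]=\big(\sum_r \EE[t\mid\meet^r]\PP[\meet^r]+3(n-1)(1-p)\big)/p$. Adding probability mass $\varepsilon$ to round-$3$ meetings changes the numerator by at most $\varepsilon\cdot(\text{at most }3(n-1)-3(n-1))\le0$ and increases $p$. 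Within round~$3$, show two things: the move-move-move profile has the smallest conditional expectation among all profiles that meet in round $3$, and \Cref{lem:diff-third-round} lowers that expectation further. Together these give a nonnegative contribution. The remaining gain $(1-\theta)^6\cdot967/(n-1)^9$, times a factor of order $(n-1)/2$ reflecting that a meeting is saved roughly $(n-1)/2$ steps before the restart, divided by $p\le1$, yields the constant $483\approx967/2$.

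The main obstacle is \Cref{lem:diff-third-round}, specifically computing the conditional probabilities under the correlated third permutation. A secondary obstacle is the monotonicity claim that $\move^3\move^3$ minimizes the conditional round-$3$ meeting time.
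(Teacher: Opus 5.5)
Your outline follows the paper's proof: the renewal identity, the reduction to the profile $\chi^1=\chi^2=\move^3$, \Cref{lem:diff-third-round} via the first locations of round~$3$, exact equality of meeting probabilities for $n=4$, and for $n\geq5$ the inequality $\EE[t_{\CFK(\theta)}\mid\meet^3_{\CFK(\theta)}]\leq\EE[t_{\AW(\theta)}\mid\meet^3_{\AW(\theta)}]$ from the two facts you list.

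The claim that only this profile differs needs an argument for one case: one player moves in all three rounds and the other stays in round~$3$. Given one player's tours, the other's uniform tours avoid them with probability $\hat{d}^1_{n-1}$ per round. So $\bar{\meet}^{\leq 2}$ is independent of the three-time mover's tours, and her constrained third tour is still uniform.

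Your $q(\theta)$ for $n=4$ is wrong. If exactly one player moves, she tours every non-home location, including the other's home, so they meet with certainty and the middle term is $0$. Hence $q=\theta^2+(1-\theta)^2\hat{d}^1_3=(3\theta^2-2\theta+1)/2$ follows directly, with no reverse-engineering needed. With $\PP[\meet^3\mid\move^3,\move^3]=(\hat{d}^1_3)^2(1-\hat{d}^1_3)=1/8$ and $\Delta=8/81$, your formula then gives the claimed value.

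The genuine gap is the constant for $n\geq5$. As written, your accounting proves only nonnegativity. A cost of ``at most $3(n-1)$ per unit'' against a gain of exactly $3(n-1)$ per unit nets zero. The factor $(n-1)/2$ is asserted, not derived, and the margin is tight since $967/2=483.5$.

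Try making your heuristic precise with the available bound $\EE[t_{\AW(\theta)}\mid\meet^3_{\AW(\theta)}]\leq 2(n-1)+n/2$, which is the value for profiles where exactly one player moves in round~$3$. It gives a saved time of only $(n-2)/2$, and $\frac{n-2}{2}\cdot\frac{967}{(n-1)^9}<\frac{483}{(n-1)^8}$ for all $n<968$.

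The paper combines both effects before using $p\leq 1$:
\[
\EE[t_{\AW(\theta)}]-\EE[t_{\CFK(\theta)}]\geq\Big(3(n-1)-\big(\EE[t_{\AW(\theta)}\mid\meet^3_{\AW(\theta)}]-\EE[t_{\AW(\theta)}\mid\meet^{\leq 2}_{\AW(\theta)}]\big)\,\PP[\meet^{\leq 2}_{\AW(\theta)}]\Big)\Big(\frac{1}{p_{\AW}}-\frac{1}{p_{\CFK}}\Big).
\]
It then uses two bounds:
\begin{itemize}
\item $\EE[t_{\AW(\theta)}\mid\meet^3_{\AW(\theta)}]\leq 2(n-1)+n/2$, which comes from $\first_1(n-1)<n/2$ via \Cref{lem:exp-first-fixed-point} and \Cref{lem:bounds}; this is exactly the comparison you flagged as a secondary obstacle;
\item $\EE[t\mid\meet^{\leq 2}]\geq1$.
\end{itemize}
Together these make the first factor at least $n/2$. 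Only then does the paper apply $1/p_{\AW}-1/p_{\CFK}\geq p_{\CFK}-p_{\AW}$.

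In your $N/p$ formulation, the missing unit is the term $N_{\CFK}\,(1/p_{\AW}-1/p_{\CFK})$, where $N_{\CFK}=\EE[\min(t_{\CFK(\theta)},3(n-1))]\geq1$. You discard this term if you only divide the numerator difference by $p\leq 1$.
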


By replacing~$\theta$ by the value that minimizes~$\EE[t_{\AW(\theta)}]$ we obtain explicit improvements over the previous best strategy; see also \citet[Table~1]{AW90}.
For~$n=4$ the optimum value of~$\theta$ is approximately~$0.322$, which yields $\EE[t_{\AW(\theta)}]-\EE[t_{\CFK(\theta)}] = \frac{8 \cdot 0.678^6}{81(8-(3\cdot 0.322^2-2\cdot 0.322+1)^3)}
\approx 0.00125$.
For~$n=5$ the optimum value of~$\theta$ is approximately~$0.3012$, which yields $\EE[t_{\AW(\theta)}]-\EE[t_{\CFK(\theta)}] \geq 483\cdot 0.6988^6/4^8 \approx 0.00086$.
    
For the proof of \Cref{thm:meeting-prob} we conditioned on the respective first locations visited by a player in three consecutive rounds, and distinguished whether these locations were all the same or all different and whether they included the home location of the other player. This was enough to allow us to compute the probability of meeting within the three rounds. 
Now we will not only be concerned with the event that players meet within the three rounds, but also with the particular round and step within that round in which they meet for the first time. 
We deal with this added difficulty by exploiting that $\CFK$ and $\AW$ behave in the same way in all rounds that are not multiples of three, which allows us to focus on the expected meeting time \emph{conditioned on both players moving in the first three rounds and meeting for the first time in the third round}.
Specifically, we will show the following lemma. 
\begin{lemma}\label{lem:diff-third-round}
Let $\theta\in [0,1)$. Then
\[
    \EE\big[t_{\CFK(\theta)} \;\big\vert\; \meet^{3}_{\CFK(\theta)},\ \chi^1_{\CFK(\theta)}\!=\!\chi^2_{\CFK(\theta)}\!=\!\move^3\big] =
    \EE\big[t_{\AW(\theta)} \;\big\vert\; \meet^3_{\AW(\theta)},\ \chi^1_{\AW(\theta)} \!=\!\chi^2_{\AW(\theta)}\!=\!\move^3\big] - \frac{8}{81}
\]
if $n=4$, and
\[
     \EE\big[t_{\CFK(\theta)} \;\big\vert\; \meet^{3}_{\CFK(\theta)},\ \chi^1_{\CFK(\theta)}\!=\!\chi^2_{\CFK(\theta)}\!=\!\move^3\big] < \EE\big[t_{\AW(\theta)} \;\big\vert\; \meet^3_{\AW(\theta)},\ \chi^1_{\AW(\theta)} \!=\!\chi^2_{\AW(\theta)}\!=\!\move^3\big]
\]
if $n\in \NN$ with~$n\geq 5$.
\end{lemma}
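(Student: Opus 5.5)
We condition on the pair of first locations $(\pi^1_{\sigma,3}(1),\pi^2_{\sigma,3}(1))$ that the players visit in the third round. Throughout, we work under the common conditioning event $\meet^3_\sigma$, $\chi^1_\sigma=\chi^2_\sigma=\move^3$. We classify these pairs into three types:
\begin{enumerate}[label=(\alph*)]
\item the players meet in the first step, $\pi^1_{\sigma,3}(1)=\pi^2_{\sigma,3}(1)\notin\{1,2\}$;
\item both players visit each other's home location, $\pi^1_{\sigma,3}(1)=2$ and $\pi^2_{\sigma,3}(1)=1$;
\item the remaining configurations: one player at the other's home, or two distinct non-home locations.
\end{enumerate}
Given the type, the rest of round three consists of uniform permutations of the remaining $n-2$ locations. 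These are independent of the earlier rounds once the first element is fixed, since $\CFK$ only constrains $\pi(1)$. The event of meeting in round three, as opposed to the event of not meeting in rounds one and two, depends only on the round-three permutations. Hence the conditional expectation of $t_\sigma$ given the type and $\meet^3_\sigma$ is identical for $\AW$ and $\CFK$, namely $2(n-1)$ plus the expected index of the first fixed point conditioned on one existing. This is the content of \Cref{lem:cond-expectations}. We compute it using \Cref{lem:exp-first-fixed-point} with shift $b$ equal to the number of locations excluded for one player but not the other: $b=0$ in type (b), $b=1$ or $b=2$ in type (c), plus one for the first step. Type (a) gives value $2(n-1)+1$, the minimum possible. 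Type (b) should beat type (c), because with $b=0$ the two remaining permutations range over the same set and meet earlier in expectation. This ordering is verified from the explicit formula.

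Next come the conditional type probabilities. For each $\sigma$ we compute the joint probability of failing to meet in rounds one and two and taking first locations of a given type in round three (\Cref{lem:cond-probabilities}). For $\AW$ this is immediate by independence: $(\hat d^1_{n-1})^2$ times $\frac{1}{(n-1)^2}$ per pair of first locations. For $\CFK$ we condition on the events $A^1_k,A^2_\ell$ with weights $w^i_k$ from \Cref{lem:probs-first-locs}. Given these events, the first locations in rounds one and two are either identical or distinct, and their relation to the third-round first locations is determined. The non-meeting probabilities of rounds one and two then factor into products of $\hat d^0_{n-2},\hat d^1_{n-2},\hat d^2_{n-2}$, exactly as in the proof of \Cref{lem:cond-prob-formula}. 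Multiplying by the probability of meeting in round three given the type, namely $1-\hat d^{b}_{n-2}$ with $b$ the relevant shift and probability $1$ for type (a), and applying Bayes' rule gives the conditional distribution of the type given $\meet^3_\sigma$.

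The claim then reduces to a dominance statement. Write $e_a<e_b<e_c$ for the conditional expectations, where type (c) itself splits into two values. The difference of the two expectations is a weighted sum of the form $\sum_{\text{types}}(p^{\CFK}-p^{\AW})e$. It is negative if $\CFK$ shifts mass from (c) to (b) and (a) in the stochastic-dominance sense: more mass on (a), and more mass on (a) together with (b). Intuitively, after two failures the correlation makes agreeing first locations, or the configuration $(2,1)$, relatively more likely. For $n=4$ all quantities are explicit: $\hat d^0_2=\tfrac12$, $\hat d^1_2=\tfrac12$, $\hat d^2_2=1$. I would plug these in directly and check that the difference equals $8/81$.

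For $n\ge5$ the main obstacle is proving the dominance inequalities uniformly in $n$. The probabilities are rational functions of $n$ and of the $\hat d^k_{n-2}$, and their differences are of order $1/n^{c}$. My first attempt would be to handle $n\in\{5,6,7\}$ exactly via Euler's table, as was done for \Cref{lem:improvement-prob}. For $n\ge8$ I would substitute the floor-bounds of \Cref{lem:bounds} to write $\hat d^k_{n-2}=\frac1e(1+k/(n-2)+O(1/n^2))$ with explicit errors. The sign of each relevant difference is then decided by its leading term, which should be positive and of order $1/n^{2}$ relative to the main term. Care is needed to keep explicit constants in the error terms so that the leading term provably dominates for all $n\ge8$.
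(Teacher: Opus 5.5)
Your decomposition by the round-three first locations, and your use of \Cref{lem:cond-expectations}, \Cref{lem:cond-probabilities} and Bayes' rule, agree with the paper. For $n=4$, direct substitution does work: the conditional expectations come out as $630/81$ for $\AW$ and $622/81$ for $\CFK$. The problems are in the case $n\ge5$.

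First, your reduction is too weak as stated. Type (c) is the union of $B_{\sigma,2}$ and $B_{\sigma,3}$, whose conditional expectations $2(n-1)+1+\first_1(n-2)$ and $2(n-1)+1+\first_2(n-2)$ differ. So the two conditions ``more mass on (a)'' and ``more mass on (a) and (b) together'' do not fix the sign of $\sum(p^{\CFK}-p^{\AW})e$: a transfer of conditional mass from $B_{\sigma,2}$ to $B_{\sigma,3}$ is invisible to them but raises the expectation. A sufficient set of conditions is:
\begin{itemize}
\item $\CFK$ puts at least as much conditional mass as $\AW$ on $B_{\sigma,4}$;
\item $\CFK$ puts at most as much conditional mass as $\AW$ on each of $B_{\sigma,2}$ and $B_{\sigma,3}$ separately;
\item $\first_0(n-2)<\min\{\first_1(n-2),\first_2(n-2)\}$.
\end{itemize}
You only assert this last ordering, and it is delicate for small $n$ ($\first_0(3)=7/4$ versus $\first_1(3)=16/9$). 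Second, and more seriously, the sign conditions on the conditional type probabilities are the entire content of the lemma, and you leave them as an unexecuted asymptotic plan.

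The paper avoids both difficulties by never normalizing by $\meet^3_\sigma$. Set $M_\sigma\coloneqq\PP[\meet^{\le3}_\sigma\mid\bar{\meet}^{\le2}_\sigma,\ \chi^1_\sigma=\chi^2_\sigma=\move^3]$ and $q_{\sigma,k}\coloneqq\PP[B_{\sigma,k}\mid\bar{\meet}^{\le2}_\sigma,\ \chi^1_\sigma=\chi^2_\sigma=\move^3]$. The paper multiplies the conditional expectation minus $2(n-1)+1$ by $M_\sigma$. Type (a) then drops out, leaving $\sum_{k\le3}c_kq_{\sigma,k}$ with $c_k=(1-\hat d^{k-1}_{n-2})\first_{k-1}(n-2)\ge0$, as in \eqref{eq:cond-et-moving-meeting2}.

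The key step is \Cref{claim:diff-probabilities}. It expresses $d^1_{n-1}$ and $d^k_{n-2}$ through $d^0_{n-3}$ and $d^0_{n-4}$ using $d^{k+1}_m=d^k_m+d^k_{m-1}$ and $d^0_m=(m-1)(d^0_{m-1}+d^0_{m-2})$. The numerators of $\Delta_k\coloneqq q_{\AW(\theta),k}-q_{\CFK(\theta),k}$ then become quadratic forms in $(d^0_{n-3},d^0_{n-4})$ with coefficients of fixed sign. This gives exactly, for every $n\ge5$, that $\Delta_2=-2\Delta_1>0$ and $\Delta_3>0$. Combined with $M_{\CFK(\theta)}>M_{\AW(\theta)}$ from \Cref{thm:meeting-prob}, the lemma reduces to the single explicit inequality $c_2>c_1/2$, and no ordering of the $\first_b$ is needed.

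The same two facts would complete your route. For $k\in\{2,3\}$, the conditional mass $(1-\hat d^{k-1}_{n-2})q_{\sigma,k}/M_\sigma$ drops from $\AW$ to $\CFK$, since the numerator goes down and the denominator goes up. The conditional mass outside $B_{\sigma,4}$ also drops, because its numerator changes by $(\hat d^1_{n-2}-\tfrac{1}{2}\hat d^0_{n-2}-\tfrac{1}{2})\Delta_2-(1-\hat d^2_{n-2})\Delta_3<0$. Without such exact identities, a brute-force use of \Cref{lem:bounds} is plausible, since the relevant differences are only polynomially small while the approximation errors are factorially small. But that computation is precisely what is missing from your proposal.
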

Combining this lemma with \Cref{thm:meeting-prob} makes it rather intuitive that $\CFK$ beats $\AW$ in terms of meeting times. 
Indeed,~$\CFK$ and~$\AW$ have the same meeting probabilities for the first two rounds and the same meeting times conditioned on meeting within these rounds; \Cref{thm:meeting-prob} means that~$\CFK$ has a larger meeting probability in the third round if both players move in the first three rounds, and \Cref{lem:diff-third-round} implies that~$\CFK$ has a smaller expected meeting time conditioned on this event. Proving the theorem rigorously still requires some work, and we will do so at the end of the section. Most of the section will be concerned with the proof of \Cref{lem:diff-third-round}.

In order to compute the expectations in \Cref{lem:diff-third-round} we will further condition on the respective first locations the players visit in the third round, which under $\CFK$ are \emph{not} independent of not having met in previous rounds. 
We will explicitly compute these conditional expectations and the associated conditional probabilities to bound the difference between the expected meeting times of $\CFK$ and $\AW$.

For~$\theta\in [0,1)$ and~$\sigma\in \{\AW(\theta),\CFK(\theta)\}$, we define the events
\begingroup
\allowdisplaybreaks
\begin{align*}
    B_{\sigma,1} & \coloneqq [|\{i\in \{1,2\}: \pi^i_{\sigma,3}(1)=3-i\}| = 2],\\
    B_{\sigma,2} & \coloneqq [|\{i\in \{1,2\}: \pi^i_{\sigma,3}(1)=3-i\}| = 1],\\
    B_{\sigma,3} & \coloneqq [|\{i\in \{1,2\}: \pi^i_{\sigma,3}(1)=3-i\}| = 0,\ \pi^1_{\sigma,3}(1)\neq \pi^2_{\sigma,3}(1)],\\
    B_{\sigma,4} & \coloneqq [\pi^1_{\sigma,3}(1)= \pi^2_{\sigma,3}(1)],
\end{align*}
\endgroup
where we recall that~$\pi^i_{\sigma,r}$ denotes the permutation in~$\calP([n]\setminus\{i\})$ used by player $i\in\{1,2\}$ in round~$r\in \NN$ under strategy~$\sigma$.
Event $B_{\sigma,1}$ thus means that in the third round both players start at the home location of the other player, $B_{\sigma,2}$ that exactly one player starts at the home location of the other player, $B_{\sigma,3}$ that none of the players start at the home location of the other player but they start at distinct locations, and~$B_{\sigma,4}$ that both players start at the same location, which is of course not the home location of either player.
We will condition on~$\chi^1_{\sigma}\!=\!\chi^2_{\sigma}\!=\!\move^3$ whenever we refer to these events, so that the third permutation of a player will indeed be the permutation according to which the player visits the non-home locations.

For~$n\in \NN$ with~$n\geq 2$ and~$b\in [n-1]\cup \{0\}$, let~$\first_b(n)$ denote the expected index of the first fixed point between a permutation taken uniformly at random from~$\calP(\{1+b,\ldots,n+b\})$ and a permutation taken uniformly at random from~$\calP([n])$, under the condition that there is at least one fixed point between them, \ie
\[
    \first_b(n) \coloneqq \EE[\min \{i\in [n]: \pi(i)=\rho(i)\} \mid \{i\in [n]: \pi(i)=\rho(i)\} \neq \emptyset],
\]
where the expectation is taken over the choice of~$\pi\in \calP(\{1+b,\ldots,n+b\})$ and~$\rho\in \calP([n])$.
The following lemma provides an explicit expression for this expectation, and we will use it in the proofs of both \Cref{lem:diff-third-round} and \Cref{thm:meeting-time}.
\begin{restatable}{lemma}{lemExpFirstFixedPoint}\label{lem:exp-first-fixed-point}
    For every~$n\in \NN$ with~$n\geq 2$ and~$b\in [n-1]\cup \{0\}$,
    \[
        \first_b(n) = \frac{1}{1-\hat{d}^b_n} \bigg( \frac{(n+1)^2}{n+1-b} ( 1 - \hat{d}^b_{n+1}) - (n+1) \hat{d}^b_n \bigg).
    \]
\end{restatable}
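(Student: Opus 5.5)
We prove the formula through the tail-sum representation of a conditional expectation, followed by inclusion--exclusion and a hockey-stick summation. Write $F\coloneqq \min\{i\in[n]:\pi(i)=\rho(i)\}$, with $F=\infty$ if there is no fixed point, and $N\coloneqq [\{i\in[n]:\pi(i)=\rho(i)\}=\emptyset]$. By \Cref{lem:shifted-derangements}, after relabeling so that $\rho$ is the identity, $\PP[N]=\hat d^b_n$. Since $F\le n$ on the complement of $N$,
\[
\first_b(n)=\frac{1}{1-\hat d^b_n}\sum_{j=1}^{n}\bigl(\PP[F\ge j]-\PP[N]\bigr)=\frac{1}{1-\hat d^b_n}\Bigl(\sum_{m=0}^{n-1}P_m-n\,\hat d^b_n\Bigr),
\]
where $P_m\coloneqq\PP[\pi(i)\neq\rho(i)\text{ for all } i\in[m]]$. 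The claim therefore reduces to the identity
\[
\sum_{m=0}^{n-1}P_m=\frac{(n+1)^2}{n+1-b}\bigl(1-\hat d^b_{n+1}\bigr)-\hat d^b_n .
\]

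To compute $P_m$ we use inclusion--exclusion over the set $K\subseteq[m]$ of positions forced to be fixed points. For $|K|=k$, the values $\pi(i)=\rho(i)$, $i\in K$, must be distinct elements of the $n-b$ common values $\{1+b,\dots,n\}$. The remaining positions of each permutation are unconstrained, so
\[
\PP[\pi(i)=\rho(i)\ \forall i\in K]=\frac{(n-b)_k\,((n-k)!)^2}{(n!)^2}=\frac{(n-b)_k}{((n)_k)^2},
\]
with $(x)_k$ the falling factorial. Hence $P_m=\sum_{k}(-1)^k\binom{m}{k}\frac{(n-b)_k}{((n)_k)^2}$. Summing over $m$ and applying the hockey-stick identity $\sum_{m=0}^{n-1}\binom{m}{k}=\binom{n}{k+1}$ gives
\[
\sum_{m=0}^{n-1}P_m=\sum_{k=0}^{n-1}(-1)^k\binom{n}{k+1}\frac{(n-b)_k}{((n)_k)^2}.
\]
The same computation with $m=n$ also yields $\hat d^b_n=\sum_k(-1)^k\binom{n}{k}\frac{(n-b)_k}{((n)_k)^2}$, which after simplification is $\sum_k(-1)^k\binom{n-b}{k}\frac{(n-k)!}{n!}$, matching the formula of \citet{Feinsilver2011}. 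It gives the analogous expression for $\hat d^b_{n+1}$ as well.

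The main obstacle is the final algebraic matching. The $k$th summand above is
\[
(-1)^k\frac{n!}{(k+1)!\,(n-k-1)!}\cdot\frac{(n-b)!}{(n-b-k)!}\cdot\frac{((n-k)!)^2}{(n!)^2}.
\]
We would shift the index to $k'=k+1$ and rewrite the summand in the form
\[
(-1)^{k'-1}\binom{n+1-b}{k'}\frac{(n+1-k')!}{(n+1)!}\cdot\frac{(n+1)^2}{n+1-b}
\]
plus a correction. This is driven by two simplifications: $\frac{n-k}{(n-k-1)!}\cdot(n-k)!\cdot\frac{1}{(k+1)!}$ collapses, and $\frac{(n-b)!}{(n-b-k)!}=\frac{(n+1-b)!}{(n+1-b-k')!}\cdot\frac{1}{n+1-b}$. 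The $k'=0$ term of $\hat d^b_{n+1}$ then supplies the "$1-$", and we expect the leftover correction to produce $-\hat d^b_n$. If this termwise matching is messy, the fallback is to verify the identity by induction on $n$, using the recurrence $d^k_n=d^{k+1}_n-d^k_{n-1}$. As a sanity check, the final formula can be tested against \Cref{tab:euler} for small $n$ and $b$, for example $n=2$, $b=0$, where $\first_0(2)=1$.
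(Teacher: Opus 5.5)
Your argument is correct, and the step you flag as the main obstacle closes in one line, so the inductive fallback is not needed. Your $k$th summand equals $(-1)^k\frac{(n-b)_k\,(n-k)\,(n-k)!}{(k+1)!\,n!}$. After the shift $j=k+1$, the $k$th term of $\frac{(n+1)^2}{n+1-b}\bigl(1-\hat d^b_{n+1}\bigr)$ is $(-1)^k\frac{(n-b)_k\,(n+1)\,(n-k)!}{(k+1)!\,n!}$. Since $(n+1)-(n-k)=k+1$, the two differ by $(-1)^k\binom{n-b}{k}\frac{(n-k)!}{n!}$, and these differences sum to exactly $\hat d^b_n$. The index ranges also agree: $(n-b)_k=0$ for $k>n-b$, and for $b=0$ the extra term $k=n$ carries the factor $n-k=0$ on your side.

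Your route differs from the paper's. The paper conditions on the number $k$ of fixed points. It uses exchangeability of positions and a hockey-stick identity over $k$-subsets to show that the conditional expected first index is $\frac{n+1}{k+1}$, and it expresses $p_b(n,k)$ through $p_b(n-k,0)$. It then observes that $\frac{n+1}{k+1}p_b(n,k)=\frac{(n+1)^2}{n+1-b}\,p_b(n+1,k+1)$, so the normalization $\sum_k p_b(n+1,k)=1$ collapses the sum. In that collapse, $1-\hat d^b_{n+1}$ comes from the missing zero-fixed-point term, and $-(n+1)\hat d^b_n$ comes from the missing one-fixed-point term $p_b(n+1,1)$. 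That argument never needs an explicit formula for $\hat d^b_n$, and it explains structurally why quantities of size $n+1$ appear.

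Your tail-sum and inclusion--exclusion route applies the hockey stick over prefix lengths $m$ rather than over subsets, and replaces the probabilistic collapse with explicit alternating sums. It is more mechanical but fully self-contained, since you rederive the Feinsilver formula that the paper only cites. The cost is that the appearance of $\hat d^b_{n+1}$ is visible only through termwise comparison, not explained.
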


We prove the lemma in \Cref{app:lem:exp-first-fixed-point}. The proof uses a recursive formula for the probability that there are exactly~$k\in[n-b]$ fixed points between a permutation taken uniformly at random from~$\calP(\{1+b,\ldots,n+b\})$ and a permutation taken uniformly at random from~$\calP([n])$.
This generalizes a technique used by \citet{AW90} to prove the special case where $b=0$.

The proof of \Cref{lem:diff-third-round} uses two main ingredients, which we state as lemmas below. The first of these applies \Cref{lem:exp-first-fixed-point} to obtain explicit expressions for the expected meeting time under~$\AW(\theta)$ or~$\CFK(\theta)$, under the condition (i)~that both players move for the first three rounds, (ii)~that they meet for the first time in the third round, and (iii)~whether the respective first locations they visit in the third round are the same or different and whether they include the home location of the other player.
The proof of this lemma is deferred to \Cref{app:lem:cond-expectations}.
\begin{restatable}{lemma}{lemCondExpectations}\label{lem:cond-expectations}
    For every $n\in \NN$ with $n\geq 4$, $\theta\in [0,1)$, and $\sigma\in \{\AW(\theta),\CFK(\theta)\}$,
    \begingroup
    \allowdisplaybreaks
    \begin{align*}
    & \EE\big[t_{\sigma} \;\big\vert\; \meet^3_{\sigma},\ \chi^1_{\sigma}\!=\!\chi^2_{\sigma}\!=\!\move^3,\ B_{\sigma,1}\big] = 2(n-1)+1+\frac{1}{1-\hat{d}^0_{n-2}} (n-1) (1-\hat{d}^0_{n-1}-\hat{d}^0_{n-2}),\\
    & \EE\big[t_{\sigma} \;\big\vert\; \meet^3_{\sigma},\ \chi^1_{\sigma}\!=\!\chi^2_{\sigma}\!=\!\move^3,\ B_{\sigma,2}\big] = 2(n-1)+1+\frac{1}{1-\hat{d}^1_{n-2}} \bigg( \frac{(n-1)^2}{n-2} (1-\hat{d}^1_{n-1})-(n-1)\hat{d}^1_{n-2}\bigg),\\
    & \EE\big[t_{\sigma} \;\big\vert\; \meet^3_{\sigma},\ \chi^1_{\sigma}\!=\!\chi^2_{\sigma}\!=\!\move^3,\ B_{\sigma,4}\big] = 2(n-1)+1.
    \end{align*}
    \endgroup
    For every~$n\in \NN$ with $n\geq 5$, $\theta\in [0,1)$, and $\sigma\in \{\AW(\theta),\CFK(\theta)\}$,
    \[
    \EE\big[t_{\sigma} \;\big\vert\; \meet^3_{\sigma},\ \chi^1_{\sigma}\!=\!\chi^2_{\sigma}\!=\!\move^3,\ B_{\sigma,3}\big]= 2(n-1)+1+\frac{1}{1-\hat{d}^2_{n-2}}\bigg( \frac{(n-1)^2}{n-3} (1-\hat{d}^2_{n-1})-(n-1)\hat{d}^2_{n-2}\bigg).
    \]
\end{restatable}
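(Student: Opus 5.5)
The plan is to reduce each of the four conditional expectations to a single call of \Cref{lem:exp-first-fixed-point}. The reduction rests on one structural fact, which I take as the main step: under the conditioning, the parts of the third-round permutations after position~$1$ are uniformly distributed and independent of everything else.

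\emph{Reduction to the third round.} Fix $\sigma\in\{\AW(\theta),\CFK(\theta)\}$ and condition on $\chi^1_\sigma=\chi^2_\sigma=\move^3$.
\begin{itemize}
\item Under both strategies, write $\pi^i_{\sigma,3}=(\pi^i_{\sigma,3}(1),\tau^i)$, where $\tau^i$ lists the remaining $n-2$ locations of round~$3$.
\item By the definition of $\CFK(\theta)$, conditional on $\pi^i_{\sigma,3}(1)$ the tail $\tau^i$ is uniform over orderings of $[n]\setminus\{i,\pi^i_{\sigma,3}(1)\}$. It is independent of $\pi^i_{\sigma,1}$, $\pi^i_{\sigma,2}$ and of the other player. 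This is because the correlation in $\CFK$ only constrains the first entry.
\item The event $\bar{\meet}^{\leq 2}_\sigma$ is measurable with respect to rounds $1$ and $2$. Hence, conditioning additionally on $\bar{\meet}^{\leq 2}_\sigma$ and on the pair of first locations $(\pi^1_{\sigma,3}(1),\pi^2_{\sigma,3}(1))$, the tails $\tau^1,\tau^2$ remain independent and uniform.
\item The conditional expectation given a realization of the first locations depends only on which of $B_{\sigma,1},\dots,B_{\sigma,4}$ occurs, because relabeling locations is a symmetry. It is therefore the same for $\AW$ and $\CFK$, and equal to the corresponding expectation in which only round~$3$ is random.
\end{itemize}
This conditional-independence step is where I expect the main care to be needed. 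One must argue that conditioning on the (correlated) first entries and on non-meeting in the earlier rounds does not bias the tails. I would write out the joint law as a product of the law of $(\pi^i_{\sigma,1},\pi^i_{\sigma,2},\pi^i_{\sigma,3}(1))$ and a uniform factor for $\tau^i$.

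\emph{Case analysis.} With $t_\sigma=2(n-1)+j$ when the players first meet at step $j$ of round~$3$, I handle the four events as follows.
\begin{itemize}
\item Under $B_{\sigma,4}$ they meet at $j=1$, giving $2(n-1)+1$.
\item Under $B_{\sigma,1}$, player~$1$ starts at $2$ and player~$2$ at $1$, so they do not meet at step~$1$. The tails are uniform orderings of the same set $[n]\setminus\{1,2\}$ of size $n-2$. Conditional on meeting, $j=1+\first_0(n-2)$.
\item Under $B_{\sigma,2}$, say player~$1$ starts at $2$ and player~$2$ at $x\notin\{1,2\}$. The tail sets are $[n]\setminus\{1,2\}$ and $[n]\setminus\{2,x\}$. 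These have size $n-2$ and each contains exactly one element missing from the other. After a bijective relabeling this is the situation of $\calP(\{1,\dots,n-2\})$ versus $\calP(\{2,\dots,n-1\})$, so $j=1+\first_1(n-2)$.
\item Under $B_{\sigma,3}$, the players start at distinct $x,y\notin\{1,2\}$. The tail sets $[n]\setminus\{1,x\}$ and $[n]\setminus\{2,y\}$ differ in two elements each (namely $\{2,y\}$ versus $\{1,x\}$). This gives $j=1+\first_2(n-2)$. Applying \Cref{lem:exp-first-fixed-point} with $b=2$ requires $2\le n-3$, which explains the restriction $n\geq 5$ for this case.
\end{itemize}
In each case, the relabeling has to be chosen so that the common elements occupy matching positions in the shifted index sets. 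This is needed so that fixed points correspond exactly to meetings.

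\emph{Plugging in.} Substituting $n-2$ for $n$ and $b\in\{0,1,2\}$ into \Cref{lem:exp-first-fixed-point} gives
\[
\first_b(n-2)=\frac{1}{1-\hat d^b_{n-2}}\Bigl(\frac{(n-1)^2}{n-1-b}(1-\hat d^b_{n-1})-(n-1)\hat d^b_{n-2}\Bigr).
\]
For $b=0$ the prefactor $(n-1)^2/(n-1)$ simplifies to $n-1$, giving the stated form $(n-1)(1-\hat d^0_{n-1}-\hat d^0_{n-2})$. For $b=1,2$ the denominators are $n-2$ and $n-3$. Adding $2(n-1)+1$ yields all four displayed formulas.
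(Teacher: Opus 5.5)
Your proposal is correct and matches the paper's proof. In the case $B_{\sigma,4}$ the players meet at the first step of round~3, and in the cases $B_{\sigma,1},B_{\sigma,2},B_{\sigma,3}$ the tails of the third-round tours reduce, after relabeling, to $\first_0(n-2)$, $\first_1(n-2)$ and $\first_2(n-2)$, which \Cref{lem:exp-first-fixed-point} evaluates to the stated formulas. The paper takes for granted that the tails stay uniform and independent under the conditioning; your explicit justification (the $\CFK$ correlation constrains only first entries) is a useful addition rather than a different route.
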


We note that the conditioning event in the last expression has probability zero when $n=4$ since $\meet^3_\sigma$ and~$B_{\sigma,3}$ are disjoint events in this case. 

The second main ingredient, which will allow us to use the conditional expectations in \Cref{lem:cond-expectations}, are expressions for the probabilities of the events~$B_{\sigma, k}$, for~$k\in [4]$, under the condition that both players move for the first three rounds and meet for the first time in the third round. Computing these probabilities is straightforward for $\AW$, but fairly difficult for $\CFK(\theta)$ due to correlations among rounds.

The following lemma provides explicit expressions for the probability, under~$\CFK(\theta)$ and the condition that both players move in the first three rounds, that (i) the players do not meet in the first two rounds and (ii) event~$B_{\CFK(\theta), k}$ takes place for $k\in [3]$. 
The probability of the event~$B_{\CFK(\theta), k}$ under the same condition can then be computed via Bayes' rule. 
\begin{restatable}{lemma}{lemCondProbabilities}\label{lem:cond-probabilities}
    For every~$n\in \NN$ with~$n\geq 4$ and~$\theta\in [0,1)$,
    \begingroup
    \allowdisplaybreaks
    \begin{align*}
        \PP\big[B&_{\CFK(\theta),1},\ \bar{\meet}^{\leq 2}_{\CFK(\theta)} \;\big\vert\; \chi^1_{\CFK(\theta)}\!=\!\chi^2_{\CFK(\theta)}\!=\!\move^3\big]\\
        ={} & \frac{1}{(n-1)^4} \bigg( (\hat{d}^0_{n-2})^2 + 2(n-2)(\hat{d}^1_{n-2})^2 + \frac{(n-2)(n^2-7n+13)}{n-3} (\hat{d}^2_{n-2})^2 \bigg),\\[10pt]
        \PP\big[B&_{\CFK(\theta),2},\ \bar{\meet}^{\leq 2}_{\CFK(\theta)} \;\big\vert\; \chi^1_{\CFK(\theta)}\!=\!\chi^2_{\CFK(\theta)}\!=\!\move^3\big] \\
        ={} & \frac{2(n-2)}{(n-1)^4} \bigg( 2\hat{d}^0_{n-2}\hat{d}^1_{n-2} + (n-3)(\hat{d}^1_{n-2})^2 + 2(n-3) \hat{d}^1_{n-2} \hat{d}^2_{n-2} + \frac{(n-4)(n^2-6n+10)}{n-3}(\hat{d}^2_{n-2})^2 \bigg),\\[10pt]
        \PP\big[B&_{\CFK(\theta),3},\ \bar{\meet}^{\leq 2}_{\CFK(\theta)} \;\big\vert\; \chi^1_{\CFK(\theta)}\!=\!\chi^2_{\CFK(\theta)}\!=\!\move^3\big]\\
        ={} & \frac{(n-2)}{(n-1)^4} \bigg( \frac{2(n^2-7n+13)}{n-3}\hat{d}^0_{n-2}\hat{d}^2_{n-2} + 2(n-3)(\hat{d}^1_{n-2})^2 + \frac{4(n-4)(n^2-6n+10)}{n-3} \hat{d}^1_{n-2}\hat{d}^2_{n-2} \\
        & \phantom{\frac{(n-2)}{(n-1)^4} \bigg(} + \frac{n^4-14n^3+75n^2-185n+181}{n-3}(\hat{d}^2_{n-2})^2 \bigg).
    \end{align*}
    \endgroup
\end{restatable}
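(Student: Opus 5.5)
We condition on the first-location pattern of each player over the three rounds, that is, on the events $A^1_k$ and $A^2_\ell$ for $k,\ell\in[4]$. Their probabilities $w^1_k w^2_\ell$ are given by \Cref{lem:probs-first-locs}, and the two players are independent given that both move in all three rounds. For each pair $(k,\ell)$ we further condition on the concrete first locations $a^i_r\coloneqq\pi^i_{\CFK(\theta),r}(1)$ for $r\in[3]$. Given these, the remaining $n-2$ positions of each permutation are uniform over permutations of $[n]\setminus\{i,a^i_r\}$ and independent across rounds and players. We then evaluate the probability that rounds one and two produce no meeting and that round three starts with the configuration $B_{\CFK(\theta),j}$.

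The key observation is the following. Suppose the first locations in round $r$ differ. Then the tails in round $r$ are permutations of two $(n-2)$-sets, which share $n-2-b$ elements, where $b\in\{0,1,2\}$. Relabelling as in \Cref{subsec:derangements}, the tails fail to meet with probability $\hat d^{\,b}_{n-2}$ by \Cref{lem:shifted-derangements}. The shift $b$ is determined as follows:
\begin{itemize}
\item $b=0$ when both players start at the other's home location, since then both tail sets equal $[n]\setminus\{1,2\}$;
\item $b=1$ when exactly one player does;
\item $b=2$ when neither does and the first locations are distinct.
\end{itemize}
If the first locations coincide, the players meet immediately. Thus, for fixed first locations, the non-meeting probability in rounds one and two is a product of two factors $\hat d^{\,b_1}_{n-2}\hat d^{\,b_2}_{n-2}$. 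The target probability is therefore a sum over first-location configurations, weighted by their conditional probabilities, of such products. Round three is restricted to configurations in class $B_{\CFK(\theta),j}$ and contributes no derangement factor, which is why only squared terms appear.

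The enumeration proceeds as follows. Player $i$'s pattern $(a^i_1,a^i_2,a^i_3)$ is either constant $(a,a,a)$ or consists of three distinct entries. The constant pattern has probability $1/(n-1)^2$ for each $a\neq i$. A distinct pattern has probability $\frac{1}{(n-1)^2(n-3)}$ for each ordered triple of distinct non-home locations; this explains the $n-3$ denominators. For each $j\in[3]$, we fix the round-three first locations with the required home-location structure. We then count the pairs of round-one and round-two first-location pairs that contain no coincidence, classified by how many of the two are ``other's home'' locations, which gives the exponents of $\hat d^0,\hat d^1,\hat d^2$. Take $B_{\CFK(\theta),1}$ as an example. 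Both players have $a^i_3=3-i$. The case where both patterns are constant gives $(\hat d^0_{n-2})^2$. If exactly one pattern is constant, say player $1$'s at $2$, then player $2$ uses two distinct non-home locations different from $1$ in rounds one and two, so both rounds have shift $1$; this gives $2(n-2)(\hat d^1_{n-2})^2$ after normalization. If both patterns are distinct, both rounds have shift $2$ provided no coincidences occur; the count of ordered pairs of such triples avoiding $a^1_r=a^2_r$ is an inclusion–exclusion over $r\in\{1,2\}$ and produces $\frac{(n-2)(n^2-7n+13)}{n-3}$. The cases $j=2,3$ are handled the same way, with more subcases, according to whether each player's round-one and round-two first locations include the other's home.

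I expect the main obstacle to be the bookkeeping in the case where both patterns are distinct, especially for $B_{\CFK(\theta),3}$. There one must simultaneously track the positions of $1$ and $2$ in each player's triple, the pairwise coincidences in rounds one and two, and the distinctness constraint in round three, which produces the quartic numerator. I would organize this by first choosing the round-three pair, then counting compatible round-one and round-two pairs through inclusion–exclusion on coincidences, and I would sanity-check the resulting formulas against \Cref{lem:cond-prob-formula}. Summing the three expressions with the no-meeting factors in round three, $\hat d^0_{n-2},\hat d^1_{n-2},\hat d^2_{n-2}$, must reproduce that lemma, and the $n=4$ value $\frac{1}{8}$ provides a numeric check.
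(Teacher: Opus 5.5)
Your proposal is correct and follows the paper's route: total probability over the players' first-location patterns across the three rounds, with the round-one and round-two tail non-meeting probabilities $\hat d^{\,b}_{n-2}$, $b\in\{0,1,2\}$, supplied by \Cref{lem:shifted-derangements}. The only difference is cosmetic. You condition on the concrete first-location triples and count via inclusion--exclusion, whereas the paper conditions on the coarser events $A^1_k, A^2_\ell$ and computes the first-step non-coincidence probabilities sequentially; carrying out your counts for $B_{\CFK(\theta),2}$ and $B_{\CFK(\theta),3}$ reproduces the stated formulas, including the quartic numerator.
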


The long proof is deferred to \Cref{app:lem:cond-probabilities}. It fixes $j\in [3]$ and uses the law of total probability to further condition on the events~$A^1_{k}$ and~$A^2_{\ell}$ for~$k,\ell\in [4]$ and write, for each of them, the conditional probabilities as a product of the form
\begin{align*}
    &\PP\big[\bar{\meet}^{\leq 2}_{\CFK(\theta)} \;\big\vert\; \chi^1_{\CFK(\theta)}\!=\!\chi^2_{\CFK(\theta)}\!=\!\move^3,\ B_{\CFK(\theta),j},\ A^1_{k},\ A^2_{\ell}\big]  \\
    & \cdot \PP\big[ B_{\CFK(\theta),j} \;\big\vert\; \chi^1_{\CFK(\theta)}\!=\!\chi^2_{\CFK(\theta)}\!=\!\move^3,\ A^1_{k},\ A^2_{\ell}\big] \cdot \PP\big[ A^1_{k},\ A^2_{\ell} \;\big\vert\; \chi^1_{\CFK(\theta)}\!=\!\chi^2_{\CFK(\theta)}\!=\!\move^3 \big].
\end{align*}

The first of these probabilities can be computed by analyzing the non-meeting probabilities in the first two rounds, under different conditions on the first locations of each of the first three rounds. 
The second probability can be obtained by understanding how the events $A^1_k$ and $A^2_\ell$, which concern the first locations of all three rounds, affect the probabilities of the events $B_{\CFK(\theta),j}$, which concern the first locations of the third round. 
The third probability is given by \Cref{lem:probs-first-locs}.

We now proceed with the proof of 
\Cref{lem:diff-third-round}, which consists of two main steps. 
First, we use \Cref{lem:cond-expectations} to write the conditional expectations in the statement of the lemma in terms of (i)~fractions of (shifted) derangements and (ii)~probabilities of the events~$B_{\sigma,k}$ under the condition of moving in all three rounds and meeting for the first time in the third round.
This allows us to further write the difference between the conditional expectations under $\CFK$ and $\AW$ as a function of the differences between the conditional probabilities.
In the second step, we use \Cref{lem:cond-probabilities} to compute and analyze these differences.
Specifically, we show that the conditional probabilities of the events~$B_{\sigma,1}$ and~$B_{\sigma,4}$ are strictly greater under~$\CFK$ than under $\AW$, while the opposite relationship holds for the events~$B_{\sigma,2}$ and~$B_{\sigma,3}$.
Intuitively this is true because, conditional on moving and meeting for the first time in the third round, $\CFK$ has a larger bias towards (i) meeting in the first step of the third round, which leads to the best-possible meeting time in this round, and (ii) both players visiting the home location of the other player in the first step of the third round, which leads to an earlier meeting time later in the round compared to a situation where players visit distinct locations that are not the other player's home location. 

\begin{proof}[Proof of \Cref{lem:diff-third-round}]
    Fix~$n\in \NN$ with~$n\geq 4$ and~$\theta\in [0,1)$.
    We first expand the conditional expectations in the statement by conditioning on the first location the players visit in the third round, captured by the events~$\{B_{\sigma,k}\}_{k\in [4]}$.
    Letting
    \[
        K\coloneqq \begin{cases}
            [4] & \text{if } n\geq 5,\\
            \{1,2,4\} & \text{if } n=4
        \end{cases}
    \]
    denote the set of indices~$k$ such that the event~$B_{\sigma,k}$ is compatible with meeting in the third round we have, for~$\sigma\in \{\AW(\theta),\CFK(\theta)\}$, that
    \begin{align}
        \EE\big[t_{\sigma} \;\big\vert\; \meet^{3}_{\sigma},\ \chi^1_{\sigma}\!=\!\chi^2_{\sigma}\!=\!\move^3\big] = \sum_{k\in K}\EE\big[t_{\sigma} \;\big\vert\; \meet^3_{\sigma},\ \chi^1_{\sigma}\!=\!\chi^2_{\sigma}\!=\!\move^3,\ B_{\sigma,k}\big] \PP\big[B_{\sigma,k} \;\big\vert\; \meet^3_{\sigma},\ \chi^1_{\sigma}\!=\!\chi^2_{\sigma}\!=\!\move^3\big].\label{eq:cond-et-moving-meeting}
    \end{align}
Explicit expressions for the conditional expectations on the right-hand side are given by \Cref{lem:cond-expectations}.
For the probabilities we use Bayes' rule to see that for each~$\sigma\in \{\AW(\theta),\CFK(\theta)\}$ and~$k\in K$,
\begin{align*}
    \PP\big[B_{\sigma,k} \;\big\vert\; \meet^{3}_{\sigma},\ \chi^1_{\sigma}\!=\!\chi^2_{\sigma}\!=\!\move^3\big] & =  \PP\big[B_{\sigma,k} \;\big\vert\; \meet^{\leq 3}_{\sigma},\ \bar{\meet}^{\leq 2}_{\sigma},\ \chi^1_{\sigma}\!=\!\chi^2_{\sigma}\!=\!\move^3\big]\\
    & = \frac{\PP\big[\meet^{\leq 3}_\sigma \;\big\vert\; B_{\sigma,k},\ \bar{\meet}^{\leq 2}_{\sigma},\ \chi^1_{\sigma}\!=\!\chi^2_{\sigma}\!=\!\move^3\big] \PP\big[B_{\sigma,k} \;\big\vert\; \bar{\meet}^{\leq 2}_\sigma,\ \chi^1_{\sigma}\!=\!\chi^2_{\sigma}\!=\!\move^3\big]}{\PP\big[\meet^{\leq 3}_{\sigma} \;\big\vert\; \bar{\meet}^{\leq 2}_{\sigma},\ \chi^1_{\sigma}\!=\!\chi^2_{\sigma}\!=\!\move^3\big]},
\end{align*}
and observe that 
\begin{equation*}
    \PP\big[\meet^{\leq 3}_\sigma \;\big\vert\; B_{\sigma,k},\ \bar{\meet}^{\leq 2}_{\sigma},\ \chi^1_{\sigma}\!=\!\chi^2_{\sigma}\!=\!\move^3\big] = \begin{cases}
        1-\hat{d}^{k-1}_{n-2} & \text{if } k\in [3]\\
        1 & \text{if } k=4.
    \end{cases}
\end{equation*}
To see the latter, consider a situation where both players move for three rounds and do not meet in the first two. 
If we condition on~$B_{\sigma,4}$, they meet in the first step of the third round; otherwise, they meet in the third round if the respective permutations they choose for the remaining $n-2$ steps are \emph{not} derangements.
If we condition on~$B_{\sigma,1}$, both of these permutations are chosen uniformly at random from~$\calP(\{3,\ldots,n\})$, so by \Cref{lem:shifted-derangements} they are \emph{not} derangements with probability~$1-\hat{d}^0_{n-2}$.
If we condition on~$B_{\sigma,2}$, and assuming w.l.o.g.\@ that~$\pi^1_{\sigma,3}(1)=2$ and~$\pi^2_{\sigma,3}(1)=j\neq 1$, the permutation of player~$1$ is chosen uniformly at random from~$\calP(\{3,\ldots,n\})$ and that of player~$2$ from~$\calP([n]\setminus \{2,j\})$, so by \Cref{lem:shifted-derangements} they are \emph{not} derangements with probability~$1-\hat{d}^1_{n-2}$.
If we condition on~$B_{\sigma,3}$, and assuming w.l.o.g.\@ that~$\pi^1_{\sigma,3}(1)=j\neq 2$ and~$\pi^2_{\sigma,3}(1)=k\notin \{1,j\}$, the permutation of player~$1$ is chosen uniformly at random from~$\calP([n]\setminus \{1,j\})$ and that of player~$2$ from~$\calP([n]\setminus \{2,k\})$, so by \Cref{lem:shifted-derangements} they are \emph{not} derangements with probability~$1-\hat{d}^2_{n-2}$. 
Replacing expressions in \eqref{eq:cond-et-moving-meeting} accordingly, we conclude that, for $\sigma\in \{\AW(\theta),\CFK(\theta)\}$,
\begin{align}
\big(\EE\big[t_{\sigma} \;\big\vert\; \meet^{3}_{\sigma},\ \chi^1_{\sigma}\!=\!\chi^2_{\sigma}\!&=\!\move^3\big] - 2(n-1) -1\big) \PP\big[\meet^{\leq 3}_{\sigma} \;\big\vert\; \bar{\meet}^{\leq 2}_{\sigma},\ \chi^1_{\sigma}\!=\!\chi^2_{\sigma}\!=\!\move^3\big] =\nonumber\\
&
(n-1) (1-\hat{d}^0_{n-1}-\hat{d}^0_{n-2})\PP\big[B_{\sigma,1} \;\big\vert\; \bar{\meet}^{\leq 2}_\sigma,\ \chi^1_{\sigma}\!=\!\chi^2_{\sigma}\!=\!\move^3\big]\nonumber\\
& + \bigg(\frac{(n-1)^2}{n-2} (1-\hat{d}^1_{n-1})-(n-1)\hat{d}^1_{n-2}\bigg)\PP\big[B_{\sigma,2} \;\big\vert\; \bar{\meet}^{\leq 2}_\sigma,\ \chi^1_{\sigma}\!=\!\chi^2_{\sigma}\!=\!\move^3\big]\nonumber\\
& + \mathds{1}_{n\geq 5} \bigg(\frac{(n-1)^2}{n-3} (1-\hat{d}^2_{n-1})-(n-1)\hat{d}^2_{n-2}\bigg) \PP\big[B_{\sigma,3} \;\big\vert\; \bar{\meet}^{\leq 2}_\sigma,\ \chi^1_{\sigma}\!=\!\chi^2_{\sigma}\!=\!\move^3\big].\label{eq:cond-et-moving-meeting2}
\end{align}

For $k\in [3]$, let
\begin{align*}
    \Delta_k &\coloneqq \PP\big[B_{\AW(\theta),k} \;\big\vert\; \bar{\meet}^{\leq 2}_{\AW(\theta)},\ \chi^1_{\AW(\theta)}\!=\!\chi^2_{\AW(\theta)}\!=\!\move^3\big] \\ 
    & \hspace{9em} - \PP\big[B_{\CFK(\theta),k} \;\big\vert\; \bar{\meet}^{\leq 2}_\CFK(\theta),\ \chi^1_{\CFK(\theta)}\!=\!\chi^2_{\CFK(\theta)}\!=\!\move^3\big].
\end{align*}

The following lemma, establishing bounds on the differences $\{\Delta_k\}_{k\in[3]}$, is shown in \Cref{app:claim:diff-probabilities} using \Cref{lem:cond-probabilities}. 
\begin{restatable}{lemma}{claimDiffProbabilities}\label{claim:diff-probabilities}
    If~$n= 4$, then~$\Delta_2 = -2\Delta_1 = 8/81$. If~$n\geq 5$, then~$\Delta_2 = -2\Delta_1 > 0$ and~$\Delta_3>0$.
\end{restatable}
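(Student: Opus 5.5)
The plan is to compute both conditional probability vectors explicitly. Then each $\Delta_k$ becomes a rational expression in $\hat d^0_{n-2},\hat d^1_{n-2},\hat d^2_{n-2}$ and $n$, and we sign it.

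\emph{The $\AW$ side.} Conditional on both players moving, the first locations of round three are independent of the first two rounds, since the rounds are independent. Hence
\[
\PP[B_{\AW(\theta),k}\mid \bar\meet^{\leq 2},\move^3]=\PP[B_{\AW(\theta),k}\mid \move^3].
\]
Each player's first location is uniform on the $n-1$ non-home locations, which gives
\[
\frac{1}{(n-1)^2},\qquad \frac{2(n-2)}{(n-1)^2},\qquad \frac{(n-2)^2-(n-2)}{(n-1)^2}=\frac{(n-2)(n-3)}{(n-1)^2}
\]
for $k=1,2,3$. The remaining mass $\frac{n-2}{(n-1)^2}$ belongs to $B_4$.

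\emph{The $\CFK$ side.} Divide the three expressions of \Cref{lem:cond-probabilities} by
\[
\PP[\bar\meet^{\leq 2}\mid\move^3]=(\hat d^1_{n-1})^2 .
\]
This equality holds because the first two rounds of $\CFK$ coincide with $\AW$. We then express $\hat d^1_{n-1}$ in terms of the $(n-2)$-quantities by conditioning on the first locations of a single round. That conditioning gives the identity
\[
(n-1)^2\hat d^1_{n-1}=\hat d^0_{n-2}+2(n-2)\hat d^1_{n-2}+(n-2)(n-3)\hat d^2_{n-2},
\]
which we verify via the Euler recursion $d^k_n=d^{k+1}_n-d^{k-1}_{n-1}$ or directly by the same first-step decomposition.

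Write $a=\hat d^0_{n-2}$, $b=\hat d^1_{n-2}$, $c=\hat d^2_{n-2}$, and let $S$ denote the right-hand side of the identity above. Then
\[
\Delta_k\,S^2=(n-1)^2\,\big(\text{AW weight}\big)\,S^2-(n-1)^4\,\big(\text{Lemma~\ref{lem:cond-probabilities} expression}\big),
\]
up to the common factor, which yields quadratic forms in $(a,b,c)$. For $k=1$ the form is
\[
(a+2(n-2)b+(n-2)(n-3)c)^2-\Big(a^2+2(n-2)b^2+\tfrac{(n-2)(n^2-7n+13)}{n-3}c^2\Big),
\]
and $k=2,3$ are analogous.

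The identity $\Delta_2=-2\Delta_1$ should follow by direct expansion; I expect all cross terms to match. A structural reason is that $\sum_k\Delta_k=-\Delta_4$, together with a symmetry between the events $B_1$, $B_2$ and $B_3$. I will simply expand and compare coefficients of $a^2,ab,\dots,c^2$.

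For $n=4$, the Euler table gives $a=\hat d^0_2=1/2$, $b=1/2$, $c=1$. Substituting yields the exact value $8/81$, which is a finite computation.

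For $n\ge5$ we need $\Delta_1<0$ and $\Delta_3>0$. Expanding the $k=1$ form, the negative contributions are of order $(n-2)b^2$ and the $c^2$ discrepancy. The sign therefore reduces to an inequality among $a,b,c$ that expresses a convexity or variance-type fact, for instance $b^2\ge ac$, or closeness of the three quantities.

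\textbf{Main obstacle.} The main difficulty is signing these quadratic forms uniformly in $n$. All of $a,b,c$ are $1/e+O(1/n)$, so leading terms cancel and the sign is decided by lower-order corrections. My plan is to substitute $a=\hat d^0_{n-2}$, $b=a+\frac{\hat d^0_{n-3}}{n-2}$, $c=b+\frac{\hat d^1_{n-3}}{n-2}$, which come from the Euler recursion divided by factorials. The forms then become polynomials in the small increments with explicit signs. I would handle small $n$ ($5\le n\le 7$) by exact evaluation from the table, and use the floor estimates of \Cref{lem:bounds} to bound the increments for $n\ge8$, in the same way as in the proof of \Cref{lem:improvement-prob}.
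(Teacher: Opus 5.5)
There is a genuine gap at the exact identity $\Delta_2=-2\Delta_1$, which you plan to get by expanding in $a,b,c$ and comparing coefficients. That identity is not a polynomial identity in three free variables.

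Let $Q_1$ and $Q_2$ be the brackets in the first two formulas of \Cref{lem:cond-probabilities}, so those probabilities are $Q_1/(n-1)^4$ and $2(n-2)Q_2/(n-1)^4$. Then $(n-1)^2S^2\Delta_1=S^2-(n-1)^2Q_1$ and $(n-1)^2S^2\Delta_2=2(n-2)\big(S^2-(n-1)^2Q_2\big)$. So $\Delta_2=-2\Delta_1$ is equivalent to $S^2=(n-1)\big(Q_1+(n-2)Q_2\big)$. Already the coefficients of $a^2$ ($1$ versus $n-1$) and of $ac$ ($2(n-2)(n-3)$ versus $0$) disagree.

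The identity holds only because the actual values satisfy the linear relation $a=(n-3)(c-b)$, i.e.\ $d^0_{n-2}=(n-3)d^1_{n-3}=(n-3)(d^0_{n-3}+d^0_{n-4})$. This is the classical derangement recursion combined with $d^2_{n-2}-d^1_{n-2}=d^1_{n-3}$. In your increment variables it reads $a=(n-3)\hat d^1_{n-3}/(n-2)$. Your substitution never imposes it. An exact equality for all $n\ge 5$ cannot come from the estimates of \Cref{lem:bounds} or from finitely many exact evaluations.

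Separately, your displayed $k=1$ form has lost a factor: it must be $S^2-(n-1)^2Q_1$, not $S^2-Q_1$. The latter is positive for every $n\ge 4$, so it would give $\Delta_1>0$, e.g.\ $68/729$ instead of $-4/81$ at $n=4$.

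Imposing that relation is the missing idea, and it also removes what you call the main obstacle. The paper writes $d^1_{n-1},d^0_{n-2},d^1_{n-2},d^2_{n-2}$ as linear combinations of $x=d^0_{n-3}$ and $y=d^0_{n-4}$, e.g.\ $d^0_{n-2}=(n-3)(x+y)$ and $d^2_{n-2}=(n-1)x+(n-2)y$. It then obtains
\[
\Delta_1=-\frac{n-2}{(n-1)^2(n-3)(d^1_{n-1})^2}\big((n+1)x^2+2(n-1)xy+(n-2)y^2\big),
\]
with $\Delta_2$ exactly $-2$ times this. The numerator of $\Delta_3$ is a positive multiple of $(n^3-4n^2+n-2)x^2+2n(n-1)(n-4)xy+(n^3-6n^2+8n-1)y^2$. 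All coefficients are positive for $n\ge 5$, and $x,y\ge 0$ are not both zero. Hence the identity, the signs, and the value $8/81$ at $n=4$ (where $x=0$, $y=1$) all follow uniformly in $n$, with no asymptotics and no case split at $n=8$.

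Your bounding route might still give the two strict inequalities for large $n$, since the errors in \Cref{lem:bounds} decay factorially. But it needs the same exact algebra in $n$ for the main terms plus error control, and it can never deliver the equality.
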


We now consider the cases where $n=4$ and $n\geq 5$ in turn. If~$n=4$, then by \Cref{thm:meeting-prob},
\begin{align}
    \PP\big[\meet^{\leq 3}_{\CFK(\theta)} \;\big\vert\; \bar{\meet}^{\leq 2}_{\CFK(\theta)},\ \chi^1_{\CFK(\theta)}\!=\!\chi^2_{\CFK(\theta)}\!=\!\move^3\big] & = \PP\big[\meet^{\leq 3}_{\AW(\theta)} \;\big\vert\; \bar{\meet}^{\leq 2}_{\AW(\theta)},\ \chi^1_{\AW(\theta)}\!=\!\chi^2_{\AW(\theta)}\!=\!\move^3\big] \nonumber\\
    & = 1-\hat{d}^1_3 = \frac{1}{2}.\label{eq:cond-prob-n4}
\end{align}
Thus
\begingroup
\allowdisplaybreaks
\begin{align*}
    \EE\big[t_{\AW(\theta)} \;\big\vert\; & \meet^3_{\AW(\theta)},\ \chi^1_{\AW(\theta)}\!=\!\chi^2_{\AW(\theta)}\!=\!\move^3\big] - \EE\big[t_{\CFK(\theta)} \;\big\vert\; \meet^{3}_{\CFK(\theta)},\ \chi^1_{\CFK(\theta)}\!=\!\chi^2_{\CFK(\theta)}\!=\!\move^3\big]\\
    & = 2\Big(\big(\EE\big[t_{\AW(\theta)} \;\big\vert\; \meet^{3}_{\AW(\theta)},\ \chi^1_{\AW(\theta)}\!=\!\chi^2_{\AW(\theta)}\!=\!\move^3\big] - 2(n-1) -1\big) \\[-2pt]
    & \hspace{7em} \cdot \PP\big[\meet^{\leq 3}_{\AW(\theta)} \;\big\vert\; \bar{\meet}^{\leq 2}_{\AW(\theta)},\ \chi^1_{\AW(\theta)}\!=\!\chi^2_{\AW(\theta)}\!=\!\move^3\big]\\
    & \hspace{2em} - \big(\EE\big[t_{\CFK(\theta)} \;\big\vert\; \meet^{3}_{\CFK(\theta)},\ \chi^1_{\CFK(\theta)}\!=\!\chi^2_{\CFK(\theta)}\!=\!\move^3\big] - 2(n-1) -1\big) \\[-2pt]
    & \hspace{7em} \cdot \PP\big[\meet^{\leq 3}_{\CFK(\theta)} \;\big\vert\; \bar{\meet}^{\leq 2}_{\CFK(\theta)},\ \chi^1_{\CFK(\theta)}\!=\!\chi^2_{\CFK(\theta)}\!=\!\move^3\big]\Big)\\
     & = 2\bigg( (n-1) (1-\hat{d}^0_{n-1}-\hat{d}^0_{n-2})\Delta_1 + \bigg(\frac{(n-1)^2}{n-2} (1-\hat{d}^1_{n-1})-(n-1)\hat{d}^1_{n-2}\bigg)\Delta_2\bigg) \\
    & = \frac{8(n-1)}{81}\bigg( \frac{2(n-1)}{n-2} (1-\hat{d}^1_{n-1})-2\hat{d}^1_{n-2} - (1-\hat{d}^0_{n-1}-\hat{d}^0_{n-2})\bigg) \\
    & = \frac{8}{27} \bigg( \frac{6}{2}\bigg(1-\frac{3}{6}\bigg) - 2\cdot\frac{1}{2} - 1 + \frac{2}{6} + \frac{1}{2}\bigg) = \frac{8}{27} \cdot \frac{1}{3} = \frac{8}{81},
\end{align*}
\endgroup
where the first equality follows from \eqref{eq:cond-prob-n4}, the second from \eqref{eq:cond-et-moving-meeting2}, the third from \Cref{claim:diff-probabilities}, and the last two from simple calculations. This completes the proof for~$n=4$.

If~$n\geq 5$, we claim that
\begingroup
\allowdisplaybreaks
\begin{align}
    \EE\big[t_{\AW(\theta)} \;\big\vert\; & \meet^3_{\AW(\theta)},\  \chi^1_{\AW(\theta)}\!=\!\chi^2_{\AW(\theta)}\!=\!\move^3\big] - \EE\big[t_{\CFK(\theta)} \;\big\vert\; \meet^{3}_{\CFK(\theta)},\ \chi^1_{\CFK(\theta)}\!=\!\chi^2_{\CFK(\theta)}\!=\!\move^3\big]\nonumber\\
    & \geq \big(\EE\big[t_{\AW(\theta)} \;\big\vert\; \meet^{3}_{\AW(\theta)},\ \chi^1_{\AW(\theta)}\!=\!\chi^2_{\AW(\theta)}\!=\!\move^3\big] - 2(n-1) -1\big)\nonumber\\
    & \hspace{7em} \cdot \PP\big[\meet^{\leq 3}_{\AW(\theta)} \;\big\vert\; \bar{\meet}^{\leq 2}_{\AW(\theta)},\ \chi^1_{\AW(\theta)}\!=\!\chi^2_{\AW(\theta)}\!=\!\move^3\big]\nonumber \\
    & \hspace{2em} - \big(\EE\big[t_{\CFK(\theta)} \;\big\vert\; \meet^{3}_{\CFK(\theta)},\ \chi^1_{\CFK(\theta)}\!=\!\chi^2_{\CFK(\theta)}\!=\!\move^3\big] - 2(n-1) -1\big)\nonumber\\
    & \hspace{7em} \cdot \PP\big[\meet^{\leq 3}_{\CFK(\theta)} \;\big\vert\; \bar{\meet}^{\leq 2}_{\CFK(\theta)},\ \chi^1_{\CFK(\theta)}\!=\!\chi^2_{\CFK(\theta)}\!=\!\move^3\big]\nonumber\\
    & = (n-1) (1-\hat{d}^0_{n-1}-\hat{d}^0_{n-2}) \Delta_1 + \bigg(\frac{(n-1)^2}{n-2} (1-\hat{d}^1_{n-1})-(n-1)\hat{d}^1_{n-2}\bigg)\Delta_2 \nonumber\\
    & \hspace{2em} + \bigg(\frac{(n-1)^2}{n-3} (1-\hat{d}^2_{n-1})-(n-1)\hat{d}^2_{n-2}\bigg)\Delta_3.\nonumber\\
    & > \frac{n-1}{2}\bigg( \frac{2(n-1)}{n-2} (1-\hat{d}^1_{n-1})-2\hat{d}^1_{n-2} - (1-\hat{d}^0_{n-1}-\hat{d}^0_{n-2})\bigg) \Delta_2.\label{ineq:et-diff-ngeq5}
\end{align}
\endgroup
Indeed, the first inequality follows from \Cref{thm:meeting-prob}, which implies that 
\begin{equation*}
    \PP\big[\meet^{\leq 3}_{\CFK(\theta)} \;\big\vert\; \bar{\meet}^{\leq 2}_{\CFK(\theta)},\ \chi^1_{\CFK(\theta)}\!=\!\chi^2_{\CFK(\theta)}\!=\!\move^3\big]
    > \PP\big[\meet^{\leq 3}_{\AW(\theta)} \;\big\vert\; \bar{\meet}^{\leq 2}_{\AW(\theta)},\ \chi^1_{\AW(\theta)}\!=\!\chi^2_{\AW(\theta)}\!=\!\move^3\big],
\end{equation*}
the equality from \eqref{eq:cond-et-moving-meeting2}, and the last inequality from \Cref{claim:diff-probabilities}.
Since~$\Delta_2>0$ from \Cref{claim:diff-probabilities}, the expression on the right-hand side of \eqref{ineq:et-diff-ngeq5} is non-negative if the expression in parentheses is non-negative.
This is easy to check when $n=5$, where the expression in parentheses evaluates to
\[ \frac{8}{3}\bigg(1-\frac{11}{24}\bigg) - 2\cdot\frac{3}{6} - 1 + \frac{9}{24} + \frac{2}{6} = \frac{11}{72} >0.\]
If~$n\geq 6$, it can be shown by observing that
\begingroup
\allowdisplaybreaks
\begin{align*}
    \frac{2(n-1)}{n-2} (1- & \hat{d}^1_{n-1})- 2\hat{d}^1_{n-2} - (1-\hat{d}^0_{n-1}-\hat{d}^0_{n-2}) \\
    \geq {} & \frac{2(n-1)}{n-2} \bigg(1- \bigg(\frac{1}{e}+\frac{1}{(n-1)e} + \frac{1}{3(n-1)!}\bigg)\bigg)-2\bigg(\frac{1}{e}+\frac{1}{(n-2)e} + \frac{1}{3(n-2)!}\bigg)\\
    & - 1+\bigg(\frac{1}{e}- \frac{1}{2(n-1)!}\bigg)+\bigg(\frac{1}{e}- \frac{1}{2(n-2)!}\bigg)\\
    ={} & 1-\frac{2}{e} \bigg(1+\frac{3-e}{n-2}\bigg) - \frac{7}{6(n-2)!} - \frac{2}{3(n-2)(n-2)!} - \frac{1}{2(n-1)!},
\end{align*}
\endgroup
where we have used the bounds from \Cref{lem:bounds} to obtain the inequality.
The last expression is approximately equal to $0.1527$ for $n=6$ and trivially increasing in~$n$, which completes the proof.
\end{proof}

We now proceed with the proof of \Cref{thm:meeting-time}.
Our goal will be to move from expected meeting times under strategy~$\sigma \in \{\AW(\theta),\CFK(\theta)\}$, conditioned on the events~$\meet^3_{\sigma}$ and~$\chi^1_{\sigma}\!=\!\chi^2_{\sigma}\!=\!\move^3$, to unconditional expected meeting times.
To this end, we first exploit the fact that both $\AW$ and $\CFK$ restart every~$3(n-1)$ steps to write the unconditional expectations in terms of conditional expectations of the form~$\EE\big[t_{\sigma} \;\big\vert\; \meet^{\leq 2}_{\sigma}\big]$, which are the same for both strategies, and~$\EE\big[t_{\sigma} \;\big\vert\; \meet^{3}_{\sigma}\big]$, whose difference we bound in a second step.
To express these conditional expectations in terms of the expectations that also condition on events of the type~$\chi^1_{\sigma}\!=\!\chi^2_{\sigma}\!=\!\move^3$, we proceed differently for~$n=4$ and for~$n\geq 5$.
For~$n=4$ the meeting probabilities within each round are the same under both strategies due to \Cref{thm:meeting-prob}, which simplifies the analysis.
For~$n\geq 5$ we show that, under the condition of meeting in the third round, (i)~the expected meeting time is lower when both players move in the first three rounds and (ii)~this event has a higher probability under~$\CFK(\theta)$ than under~$\AW(\theta)$ due to \Cref{thm:meeting-prob}. 
\begin{proof}[Proof of \Cref{thm:meeting-time}]
    We fix~$\theta\in[0,1)$ throughout the proof and make two initial observations.
    First, for~$\sigma\in \{\AW(\theta),\CFK(\theta)\}$,
    \begin{align}
        \EE[t_{\sigma}] & = \EE\big[t_{\sigma}\;\big\vert\; \meet^{\leq 2}_\sigma\big] \PP\big[\meet^{\leq 2}_\sigma\big] + \EE\big[t_{\sigma}\;\big\vert\; \meet^{3}_\sigma\big] \PP\big[\meet^{3}_\sigma\big] + (3(n-1)+\EE[t_\sigma])\big(1-\PP\big[\meet^{\leq 3}_\sigma\big]\big), \nonumber \\
        \intertext{because both strategies restart after~$3(n-1)$ steps and the events~$\meet^{\leq 2}_\sigma$ and~$\meet^{3}_\sigma$ are contained in the event~$\meet^{\leq 3}_\sigma$. Therefore} 
        \EE[t_{\sigma}] &= \frac{\EE\big[t_{\sigma}\;\big\vert\; \meet^{\leq 2}_\sigma\big] \PP\big[\meet^{\leq 2}_\sigma\big] + \EE\big[t_{\sigma}\;\big\vert\; \meet^{3}_\sigma\big] \PP\big[\meet^{3}_\sigma\big]}{\PP\big[\meet^{\leq 3}_\sigma\big]} + 3(n-1)\bigg(\frac{1}{\PP\big[\meet^{\leq 3}_\sigma\big]}-1\bigg)\nonumber\\
        & = \EE\big[t_{\sigma}\;\big\vert\; \meet^{\leq 2}_\sigma\big] + \big(\EE\big[t_{\sigma}\;\big\vert\; \meet^{3}_\sigma\big] - \EE\big[t_{\sigma}\;\big\vert\; \meet^{\leq 2}_\sigma\big] \big)\PP\big[\meet^{3}_\sigma \;\big\vert\; \meet^{\leq 3}_\sigma \big] + 3(n-1)\bigg(\frac{1}{\PP\big[\meet^{\leq 3}_\sigma\big]}-1\bigg). \label{eq:et-markovian}
    \end{align}
    Second,
    \begingroup
    \allowdisplaybreaks
    \begin{align}
        \EE\big[t&_{\AW(\theta)} \;\big\vert\; \meet^{3}_{\AW(\theta)}\big] - \EE\big[t_{\CFK(\theta)} \;\big\vert\; \meet^{3}_{\CFK(\theta)}\big]\nonumber\\
        ={} & \EE\big[t_{\AW(\theta)} \;\big\vert\; \meet^{3}_{\AW(\theta)},\ \neg \big(\chi^1_{\AW(\theta)}\!=\!\chi^2_{\AW(\theta)}\!=\!\move^3\big) \big]\PP\big[ \neg\big(\chi^1_{\AW(\theta)}\!=\!\chi^2_{\AW(\theta)}\!=\!\move^3\big) \;\big\vert\; \meet^{3}_{\AW(\theta)} \big] \nonumber\\
        & - \EE\big[t_{\CFK(\theta)} \;\big\vert\; \meet^{3}_{\CFK(\theta)},\ \neg\big(\chi^1_{\CFK(\theta)}\!=\!\chi^2_{\CFK(\theta)}\!=\!\move^3\big) \big] \PP\big[ \neg\big(\chi^1_{\CFK(\theta)}\!=\!\chi^2_{\CFK(\theta)}\!=\!\move^3\big) \;\big\vert\; \meet^{3}_{\CFK(\theta)} \big] \nonumber\\
        & + \EE\big[t_{\AW(\theta)} \;\big\vert\; \meet^{3}_{\AW(\theta)},\ \chi^1_{\AW(\theta)}\!=\!\chi^2_{\AW(\theta)}\!=\!\move^3 \big] \PP\big[ \chi^1_{\AW(\theta)}\!=\!\chi^2_{\AW(\theta)}\!=\!\move^3 \;\big\vert\; \meet^{3}_{\AW(\theta)} \big] \nonumber\\
        & - \EE\big[t_{\CFK(\theta)} \;\big\vert\; \meet^{3}_{\CFK(\theta)},\ \chi^1_{\CFK(\theta)}\!=\!\chi^2_{\CFK(\theta)}\!=\!\move^3 \big] \PP\big[ \chi^1_{\CFK(\theta)}\!=\!\chi^2_{\CFK(\theta)}\!=\!\move^3 \;\big\vert\; \meet^{3}_{\CFK(\theta)} \big]\nonumber\\
        ={} & \EE\big[t_{\AW(\theta)} \;\big\vert\; \meet^{3}_{\AW(\theta)},\ \neg \big(\chi^1_{\AW(\theta)}\!=\!\chi^2_{\AW(\theta)}\!=\!\move^3\big) \big] \nonumber\\
        & \qquad \cdot \big( \PP\big[ \chi^1_{\CFK(\theta)}\!=\!\chi^2_{\CFK(\theta)}\!=\!\move^3 \;\big\vert\; \meet^{3}_{\CFK(\theta)} \big]- \PP\big[ \chi^1_{\AW(\theta)}\!=\!\chi^2_{\AW(\theta)}\!=\!\move^3 \;\big\vert\; \meet^{3}_{\AW(\theta)} \big] \big)\nonumber\\
        & + \EE\big[t_{\AW(\theta)} \;\big\vert\; \meet^{3}_{\AW(\theta)},\ \chi^1_{\AW(\theta)}\!=\!\chi^2_{\AW(\theta)}\!=\!\move^3 \big] \PP\big[ \chi^1_{\AW(\theta)}\!=\!\chi^2_{\AW(\theta)}\!=\!\move^3 \;\big\vert\; \meet^{3}_{\AW(\theta)} \big] \nonumber\\
        & - \EE\big[t_{\CFK(\theta)} \;\big\vert\; \meet^{3}_{\CFK(\theta)},\ \chi^1_{\CFK(\theta)}\!=\!\chi^2_{\CFK(\theta)}\!=\!\move^3 \big] \PP\big[ \chi^1_{\CFK(\theta)}\!=\!\chi^2_{\CFK(\theta)}\!=\!\move^3 \;\big\vert\; \meet^{3}_{\CFK(\theta)} \big],\label{eq:diff-et-cond-third-round}
    \end{align}
    \endgroup
    where we have used that conditioned on meeting in the third round, the expected meeting times under both strategies are the same unless players move for all three rounds.\footnote{This might not seem entirely obvious when one player moves in the first three rounds and the other one moves in the first two and stays in the last one. In this case, it holds because conditioning on both players moving and not meeting in the first two rounds does not affect the probability of visiting the other's home location at the beginning of the third round. This can be easily seen by applying Bayes' rule and observing that the event of a player visiting the other's home location in the first step of a round does not affect the meeting probability within that round.}

    We now consider the cases where $n=4$ and $n\geq 5$ in turn. If $n=4$, we claim that
    \begingroup
    \allowdisplaybreaks
    \begin{align}
        \EE[&t_{\AW(\theta)}] -\EE[t_{\CFK(\theta)}] \nonumber\\
        & = \big( \EE\big[t_{\AW(\theta)}\;\big\vert\; \meet^{3}_{\AW(\theta)}\big] - \EE\big[t_{\CFK(\theta)}\;\big\vert\; \meet^{3}_{\CFK(\theta)}\big] \big) \PP\big[\meet^3_{\AW(\theta)} \;\big\vert\; \meet^{\leq 3}_{\AW(\theta)}\big]\nonumber\\
        & = \big( \EE\big[t_{\AW(\theta)} \;\big\vert\; \meet^{3}_{\AW(\theta)},\ \chi^1_{\AW(\theta)}\!=\!\chi^2_{\AW(\theta)}\!=\!\move^3 \big] - \EE\big[t_{\CFK(\theta)} \;\big\vert\; \meet^{3}_{\CFK(\theta)},\ \chi^1_{\CFK(\theta)}\!=\!\chi^2_{\CFK(\theta)}\!=\!\move^3 \big] \big) \nonumber\\
        & \qquad \cdot \PP\big[ \chi^1_{\AW(\theta)}\!=\!\chi^2_{\AW(\theta)}\!=\!\move^3 \;\big\vert\; \meet^{3}_{\AW(\theta)} \big] \PP\big[\meet^3_{\AW(\theta)} \;\big\vert\; \meet^{\leq 3}_{\AW(\theta)}\big] \nonumber\\
        & = \frac{8}{81}\cdot \frac{\PP\big[ \meet^{3}_{\AW(\theta)} \;\big\vert\; \chi^1_{\AW(\theta)}\!=\!\chi^2_{\AW(\theta)}\!=\!\move^3 \big] \PP\big[\chi^1_{\AW(\theta)}\!=\!\chi^2_{\AW(\theta)}\!=\!\move^3 \big]}{\PP\big[\meet^{\leq 3}_{\AW(\theta)}\big]} \nonumber\\
        & = \frac{8}{81}\cdot \frac{(1-\theta)^6 (\hat{d}^1_3)^2(1-\hat{d}^1_3)}{\PP\big[\meet^{\leq 3}_{\AW(\theta)}\big]} = \frac{(1-\theta)^6}{81\cdot \PP\big[\meet^{\leq 3}_{\AW(\theta)}\big]}.\label{eq:diff-et-n4}
    \end{align}
    \endgroup
    Indeed, the first equality follows from \eqref{eq:et-markovian} and the fact that, by definition of the strategies and \Cref{thm:meeting-prob},
    \begin{gather*}
        \EE\big[t_{\AW(\theta)}\;\big\vert\; \meet^{\leq 2}_{\AW(\theta)}\big] = \EE\big[t_{\CFK(\theta)}\;\big\vert\; \meet^{\leq 2}_{\CFK(\theta)}\big], \\
        \PP\big[\meet^{3}_{\AW(\theta)}\big] = \PP\big[\meet^{3}_{\CFK(\theta)}\big], \text{ and }
        \PP\big[\meet^{\leq 3}_{\AW(\theta)}\big] = \PP\big[\meet^{\leq 3}_{\CFK(\theta)}\big].
    \end{gather*}
    The second equality follows from \eqref{eq:diff-et-cond-third-round} and the fact that, by definition of the strategies and \Cref{thm:meeting-prob},
    \begin{align*}
        \PP\big[ \chi^1_{\AW(\theta)}\!={}&\chi^2_{\AW(\theta)}\!=\!\move^3 \;\big\vert\; \meet^{3}_{\AW(\theta)} \big] \\
        & = \frac{\PP\big[ \meet^{3}_{\AW(\theta)} \;\big\vert\; \chi^1_{\AW(\theta)}\!=\!\chi^2_{\AW(\theta)}\!=\!\move^3 \big] \PP\big[\chi^1_{\AW(\theta)}\!=\!\chi^2_{\AW(\theta)}\!=\!\move^3\big]}{\PP\big[ \meet^{3}_{\AW(\theta)}\big]} \\
        & = \frac{\PP\big[ \meet^{3}_{\CFK(\theta)} \;\big\vert\; \chi^1_{\CFK(\theta)}\!=\!\chi^2_{\CFK(\theta)}\!=\!\move^3 \big] \PP\big[\chi^1_{\CFK(\theta)}\!=\!\chi^2_{\CFK(\theta)}\!=\!\move^3\big]}{\PP\big[ \meet^{3}_{\CFK(\theta)}\big]} \\
        & = \PP\big[ \chi^1_{\CFK(\theta)}\!=\!\chi^2_{\CFK(\theta)}\!=\!\move^3 \;\big\vert\; \meet^{3}_{\CFK(\theta)} \big].
    \end{align*}
    The third equality holds by \Cref{lem:diff-third-round} and Bayes' rule, the fourth by definition of~$\AW(\theta)$ and \Cref{lem:shifted-derangements}, and the last one because $\hat{d}^1_3=1/2$.
    Since
    \[
        \PP\big[ \meet^{\leq 3}_{\AW(\theta)} \big] = 1 - \PP\big[ \bar{\meet}^{1}_{\AW(\theta)} \big]^3 = 1 - \big(\theta^2 + (1-\theta)^2\hat{d}^1_3\big)^3 = 1 - \frac{1}{8}(2\theta^2 + (1-\theta)^2)^3 = 1 - \frac{1}{8}(3\theta^2-2\theta+1)^3,
    \]
    we conclude from \eqref{eq:diff-et-n4} that
    \(
        \EE[t_{\AW(\theta)} ] -\EE[t_{\CFK(\theta)}] = \frac{8(1-\theta)^6}{81(8 - (3\theta^2-2\theta+1)^3)}.
    \)
    This completes the proof for $n=4$.
    
    For~$n\geq 5$, we claim that
    \begin{equation}
        \EE\big[t_{\AW(\theta)} \;\big\vert\; \meet^{3}_{\AW(\theta)}\big] \geq \EE\big[t_{\CFK(\theta)} \;\big\vert\; \meet^{3}_{\CFK(\theta)}\big].\label{ineq:et-cond-third-round}
    \end{equation}
    Indeed, by \eqref{eq:diff-et-cond-third-round} and \Cref{lem:diff-third-round},
    \begin{align*}
        \EE\big[t&_{\AW(\theta)} \;\big\vert\; \meet^{3}_{\AW(\theta)}\big] - \EE\big[t_{\CFK(\theta)} \;\big\vert\; \meet^{3}_{\CFK(\theta)}\big]\\
        \geq{} & \big( \PP\big[ \chi^1_{\CFK(\theta)}\!=\!\chi^2_{\CFK(\theta)}\!=\!\move^3 \;\big\vert\; \meet^{3}_{\CFK(\theta)} \big]- \PP\big[ \chi^1_{\AW(\theta)}\!=\!\chi^2_{\AW(\theta)}\!=\!\move^3 \;\big\vert\; \meet^{3}_{\AW(\theta)} \big] \big)\\
        & \cdot \big( \EE\big[t_{\AW(\theta)} \;\big\vert\; \meet^{3}_{\AW(\theta)},\ \neg \big(\chi^1_{\AW(\theta)}\!=\!\chi^2_{\AW(\theta)}\!=\!\move^3\big) \big] - \EE\big[t_{\AW(\theta)} \;\big\vert\; \meet^{3}_{\AW(\theta)},\ \chi^1_{\AW(\theta)}\!=\!\chi^2_{\AW(\theta)}\!=\!\move^3 \big]\big),
    \end{align*}
    and we will see that both terms in parentheses on the right-hand side are non-negative.
    For the first term this holds because
    \begin{align*}
        \PP\big[ \chi&^1_{\CFK(\theta)}\!=\!\chi^2_{\CFK(\theta)}\!=\!\move^3 \;\big\vert\; \meet^{3}_{\CFK(\theta)} \big]- \PP\big[ \chi^1_{\AW(\theta)}\!=\!\chi^2_{\AW(\theta)}\!=\!\move^3 \;\big\vert\; \meet^{3}_{\AW(\theta)} \big]\\
        ={} & \PP\big[ \neg\big(\chi^1_{\AW(\theta)}\!=\!\chi^2_{\AW(\theta)}\!=\!\move^3\big) \;\big\vert\; \meet^{3}_{\AW(\theta)} \big] - \PP\big[ \neg(\chi^1_{\CFK(\theta)}\!=\!\chi^2_{\CFK(\theta)}\!=\!\move^3\big) \;\big\vert\; \meet^{3}_{\CFK(\theta)} \big]\\
        ={} & \PP\big[\meet^{3}_{\AW(\theta)} \;\big\vert\; \neg\big(\chi^1_{\AW(\theta)}\!=\!\chi^2_{\AW(\theta)}\!=\!\move^3\big) \big]\PP\big[ \neg\big(\chi^1_{\AW(\theta)}\!=\!\chi^2_{\AW(\theta)}\!=\!\move^3\big) \big] \bigg(\frac{1}{\PP\big[\meet^{3}_{\AW(\theta)}\big]} - \frac{1}{\PP\big[\meet^{3}_{\CFK(\theta)}\big]} \bigg)\\
        \geq {} & 0,
    \end{align*}
    where the second equality follows from Bayes' rule and the fact that the probabilities $\PP\big[\meet^{3}_{\sigma} \;\big\vert\; \neg\big(\chi^1_{\sigma}\!=\!\chi^2_{\sigma}\!=\!\move^3\big) \big]$ and $\PP\big[ \neg\big(\chi^1_{\sigma}\!=\!\chi^2_{\sigma}\!=\!\move^3\big) \big]$ are identical for $\sigma = \AW(\theta)$ and $\sigma=\CFK(\theta)$, and the inequality from \Cref{thm:meeting-prob} and the fact that~$\PP\big[\meet^{\leq 2}_{\AW(\theta)}\big]=\PP\big[\meet^{\leq 2}_{\CFK(\theta)}\big]$.
    For the second term we note that for an event~$D\in \big\{ \big[ \chi^1_{\AW(\theta)}\!=\!\chi^2_{\AW(\theta)}\!=\!\move^3 \big], \big[ \neg\big(\chi^1_{\AW(\theta)}\!=\!\chi^2_{\AW(\theta)}\!=\!\move^3 \big)\big] \big\}$,
    \begin{align*}
        \EE\big[t&_{\AW(\theta)} \;\big\vert\; \meet^{3}_{\AW(\theta)},\ D\big] \\
        & = \EE\big[t_{\AW(\theta)} \;\big\vert\; \meet^{3}_{\AW(\theta)},\ \chi^1_{\AW(\theta),3}\!=\!\chi^2_{\AW(\theta),3}\!=\!\move \big] \PP\big[ \chi^1_{\AW(\theta),3}\!=\!\chi^2_{\AW(\theta),3}\!=\!\move \;\big\vert\; \meet^{3}_{\AW(\theta)},\ D \big]\\
        & \hspace{7ex} + \EE\big[t_{\AW(\theta)} \;\big\vert\; \meet^{3}_{\AW(\theta)},\ \chi^1_{\AW(\theta),3}\!\neq \!\chi^2_{\AW(\theta),3} \big] \PP\big[ \chi^1_{\AW(\theta),3}\!\neq \!\chi^2_{\AW(\theta),3} \;\big\vert\; \meet^{3}_{\AW(\theta)},\ D \big], 
    \end{align*}
    because~$\meet^{3}_{\AW(\theta)}$ and~$\chi^1_{\AW(\theta),3}\!=\!\chi^2_{\AW(\theta),3}\!=\!\stay$ are disjoint events and the random variables~$\chi^i_{\AW(\theta),r}$ for $r\in \{1,2\}$ do not affect the expected meeting time when conditioning on~$\meet^{3}_{\AW(\theta)}$.
    Since $\PP\big[ \chi^1_{\AW(\theta),3}\!\neq \!\chi^2_{\AW(\theta),3} \;\big\vert\; \meet^{3}_{\AW(\theta)},\ \chi^1_{\AW(\theta)}\!=\!\chi^2_{\AW(\theta)}\!=\!\move^3 \big]=0$, this yields
    \begin{align*}
        \EE\big[t&_{\AW(\theta)} \;\big\vert\; \meet^{3}_{\AW(\theta)},\ \neg \big(\chi^1_{\AW(\theta)}\!=\!\chi^2_{\AW(\theta)}\!=\!\move^3\big) \big] - \EE\big[t_{\AW(\theta)} \;\big\vert\; \meet^{3}_{\AW(\theta)},\ \chi^1_{\AW(\theta)}\!=\!\chi^2_{\AW(\theta)}\!=\!\move^3 \big]\\
        & = \big(\EE\big[t_{\AW(\theta)} \;\big\vert\; \meet^{3}_{\AW(\theta)},\ \chi^1_{\AW(\theta),3}\!\neq \!\chi^2_{\AW(\theta),3} \big] - \EE\big[t_{\AW(\theta)} \;\big\vert\; \meet^{3}_{\AW(\theta)},\ \chi^1_{\AW(\theta),3}\!=\!\chi^2_{\AW(\theta),3}\!=\!\move \big]\big)\\
        & \hspace{7ex} \cdot \PP\big[ \chi^1_{\AW(\theta),3}\!\neq \!\chi^2_{\AW(\theta),3} \;\big\vert\; \meet^{3}_{\AW(\theta)},\ \neg\big(\chi^1_{\AW(\theta)}\!=\!\chi^2_{\AW(\theta)}\!=\!\move^3 \big) \big].
    \end{align*}
    The last expression is non-negative because
    \begin{align}
        \EE\big[t_{\AW(\theta)} \;\big\vert\; \meet^{3}_{\AW(\theta)},\ \chi^1_{\AW(\theta),3}\!\neq \!\chi^2_{\AW(\theta),3} \big] - 2(n-1)&=\frac{n}{2},\label{eq:time-staying-moving-1}\\
        \EE\big[t_{\AW(\theta)} \;\big\vert\; \meet^{3}_{\AW(\theta)},\ \chi^1_{\AW(\theta),3}\!=\!\chi^2_{\AW(\theta),3}\!=\!\move \big] -2(n-1) &= \first_1(n-1) \\ 
        &= n\bigg( 1-\frac{n\hat{d}^1_{n}-1}{(n-1)(1-\hat{d}^1_{n-1})} \bigg)\label{eq:time-staying-moving-2}
    \end{align}
    by \Cref{lem:exp-first-fixed-point}, and
    \begin{align*}
        n\bigg( \frac{1}{2} - 1+&\frac{n\hat{d}^1_{n}-1}{(n-1)(1-\hat{d}^1_{n-1})} \bigg) = \frac{n}{2(n-1)(1-\hat{d}^1_{n-1})}\big(2n\hat{d}^1_{n}-n-1+(n-1)\hat{d}^1_{n-1}\big)\\
        & \hspace{3em} \geq \frac{n}{2(n-1)(1-\hat{d}^1_{n-1})} \bigg( \bigg(\frac{3}{e}-1\bigg)n + \frac{2}{e}-1 - \frac{1}{4(n-1)!} - \frac{1}{8(n-2)!}\bigg)\\
        & \hspace{3em} > 0
    \end{align*}
    by \Cref{lem:bounds} and because the last expression in parentheses is 
    approximately $0.22$ for~$n=5$ and trivially increasing in~$n$. This shows \eqref{ineq:et-cond-third-round}, which we will now use to complete the proof. 
    
    We claim that
    \begingroup
    \allowdisplaybreaks
    \begin{align*}
        \EE[t&_{\AW(\theta)} ] -\EE[t_{\CFK(\theta)}] \\
        & = \big(\EE\big[t_{\AW(\theta)}\;\big\vert\; \meet^{3}_{\AW(\theta)}\big] - \EE\big[t_{\AW(\theta)}\;\big\vert\; \meet^{\leq 2}_{\AW(\theta)}\big] \big)\PP\big[\meet^{3}_{\AW(\theta)} \;\big\vert\; \meet^{\leq 3}_{\AW(\theta)} \big] \\
        & \hspace{5em} - \big(\EE\big[t_{\CFK(\theta)}\;\big\vert\; \meet^{3}_{\CFK(\theta)}\big] - \EE\big[t_{\AW(\theta)}\;\big\vert\; \meet^{\leq 2}_{\AW(\theta)}\big] \big)\PP\big[\meet^{3}_{\CFK(\theta)} \;\big\vert\; \meet^{\leq 3}_{\CFK(\theta)} \big] \\
        & \hspace{5em} + 3(n-1)\bigg(\frac{1}{\PP[\meet^{\leq 3}_{\AW(\theta)}]}-\frac{1}{\PP[\meet^{\leq 3}_{\CFK(\theta)}]}\bigg)\\
        & \geq \big(\EE\big[t_{\AW(\theta)}\;\big\vert\; \meet^{3}_{\AW(\theta)}\big] - \EE\big[t_{\AW(\theta)}\;\big\vert\; \meet^{\leq 2}_{\AW(\theta)}\big] \big)\big(\PP\big[\meet^{3}_{\AW(\theta)} \;\big\vert\; \meet^{\leq 3}_{\AW(\theta)} \big] - \PP\big[\meet^{3}_{\CFK(\theta)} \;\big\vert\; \meet^{\leq 3}_{\CFK(\theta)} \big]\big)\\
        & \hspace{5em} + 3(n-1)\bigg(\frac{1}{\PP[\meet^{\leq 3}_{\AW(\theta)}]}-\frac{1}{\PP[\meet^{\leq 3}_{\CFK(\theta)}]}\bigg)\\
        & = \big(\EE\big[t_{\AW(\theta)}\;\big\vert\; \meet^{3}_{\AW(\theta)}\big] - \EE\big[t_{\AW(\theta)}\;\big\vert\; \meet^{\leq 2}_{\AW(\theta)}\big] \big) \Bigg(\frac{\PP\big[\meet^{\leq 2}_{\CFK(\theta)}\big]}{\PP\big[\meet^{\leq 3}_{\CFK(\theta)} \big]} - \frac{\PP\big[\meet^{\leq 2}_{\AW(\theta)}\big]}{\PP\big[\meet^{\leq 3}_{\AW(\theta)} \big]} \Bigg)\\
        & \hspace{5em} + 3(n-1)\bigg(\frac{1}{\PP[\meet^{\leq 3}_{\AW(\theta)}]}-\frac{1}{\PP[\meet^{\leq 3}_{\CFK(\theta)}]}\bigg)\\
        & = \Big( 3(n-1) - \big(\EE\big[t_{\AW(\theta)}\;\big\vert\; \meet^{3}_{\AW(\theta)}\big] - \EE\big[t_{\AW(\theta)}\;\big\vert\; \meet^{\leq 2}_{\AW(\theta)}\big] \big)\PP\big[\meet^{\leq 2}_{\AW(\theta)}\big] \Big) \\ & \hspace{9em}  
        \bigg(\frac{1}{\PP[\meet^{\leq 3}_{\AW(\theta)}]}-\frac{1}{\PP[\meet^{\leq 3}_{\CFK(\theta)}]}\bigg).
    \end{align*}
    \endgroup
    Indeed, the first equality holds by~\eqref{eq:et-markovian} and the fact that $\EE\big[t_{\AW(\theta)}\;\big\vert\; \meet^{\leq 2}_{\AW(\theta)}\big]=\EE\big[t_{\CFK(\theta)}\;\big\vert\; \meet^{\leq 2}_{\CFK(\theta)}\big]$ by definition of the strategies, the inequality by~\eqref{ineq:et-cond-third-round}, and the last two equalities by rearranging and using that~$\PP\big[\meet^{\leq 2}_{\AW(\theta)}\big] = \PP\big[\meet^{\leq 2}_{\CFK(\theta)}\big]$.
    Since $\EE\big[t_{\AW(\theta)}\mid \meet^{3}_{\AW(\theta)}\big] \leq 2(n-1)+n/2$ by \eqref{eq:time-staying-moving-1} and \eqref{eq:time-staying-moving-2}, $\EE\big[t_{\AW(\theta)}\;\big\vert\; \meet^{\leq 2}_{\AW(\theta)}\big]\geq 1$, 
    and~$\PP\big[\meet^{\leq 2}_{\AW(\theta)}\big]\leq 1$, we conclude that
    \[ 
        \EE[t_{\AW(\theta)} ] -\EE[t_{\CFK(\theta)}] \geq \frac{n-1}{2}\bigg(\frac{1}{\PP\big[\meet^{\leq 3}_{\AW(\theta)}\big]}-\frac{1}{\PP\big[\meet^{\leq 3}_{\CFK(\theta)}\big]}\bigg) \geq \frac{n-1}{2}\cdot \frac{967(1-\theta)^6}{(n-1)^9} > \frac{483(1-\theta)^6}{(n-1)^8},
    \]
    where the second inequality follows from \Cref{thm:meeting-prob} and the fact that~$x-y>z$ for values~$x,y,z\in (0,1)$ implies that $\frac{1}{y}-\frac{1}{x} = \frac{x-y}{xy} > z$. 
    This completes the proof.
\end{proof}

\section{Conclusion}

We have shown that the Anderson--Weber strategy is not optimal for the symmetric rendezvous problem with any finite number $n\geq 4$ of locations, by giving an explicit improving strategy $\CFK$. The improvement is small, but it is conceptually significant: it demonstrates that carefully chosen correlations between consecutive moving rounds can outperform the independent randomization underlying $\AW$.

The improving strategy has been chosen to allow for a rigorous proof rather than to optimize the improvement, and there are a number of ways in which the meeting time could be reduced further. First, the probability $\theta$ of staying at the home location could be optimized specifically for $\CFK$ rather than being set to the value that is optimal for~$\AW$. The latter was done to make the comparison with $\AW$ easier. Second, our strategy modifies~$\AW$ only in every third round, and only when the player moves in that round and in the two preceding rounds. Since meeting occurs with probability one in any round in which exactly one player moves, the moving rounds of the two players coincide conditional on not having met. It is therefore natural to apply the same modification to every third moving round rather than to every third round. Both refinements should improve the performance for any finite $n$ but would make the analysis substantially more complicated.

Our result leaves open the broader question for optimal strategies when $n\geq 4$. In particular the improvement provided by $\CFK$ over $\AW$ vanishes as $n\to\infty$, and the conjecture of \citet{AW90} that $\AW$ is asymptotically optimal remains open. However, our construction suggests a possible route toward further progress. The correlations introduced by $\CFK$ can be interpreted in terms of cliques in the complement of the derangement graph, prescribing how cliques are contracted and visited. We have made certain choices to ensure analytical tractability, but these choices are not obviously optimal. A more systematic understanding of the clique structure of the complement of the derangement graph may lead to stronger correlated strategies.

A number of straightforward modifications, like optimizing $\theta$, modifying the set of affected rounds, or replacing blocks of $3$ by blocks of $n-1$ moving rounds do not appear to provide an asymptotic improvement over $\AW$. Any disproof of the conjecture of asymptotic optimality is therefore likely to require a new way of using the structure of the derangement graph. We believe that the graph-theoretic perspective we have introduced provides a useful framework for this question and may be relevant more broadly to the design of correlated strategies in symmetric rendezvous.

\newpage
\appendix

\section{Proofs Deferred from \texorpdfstring{\Cref{sec:prelims}}{Section 2}}

\subsection{Proof of \texorpdfstring{\Cref{lem:shifted-derangements}}{Lemma 1}}\label{app:lem:shifted-derangements}

\lemShiftedDerangements*

\begin{proof}
We write the entries in the table as~$d_n^{n-\ell}$ for~$n \in \NN_0$ and~$\ell \in \{0,\dots,n\}$ and prove the statement by induction over~$\ell$.
For~$\ell=0$, the statement clearly holds since all permutations~$\pi \in \mathcal{P}(\{1+n,\dots,2n\})$ do not have a fixed point and there are~$d_{n}^{n-0} = n!$ of them. Suppose that the result is correct for all entries up to a fixed value of~$\ell$.
Let~$n \in \NN_0$ with~$n \geq \ell+1$ be arbitrary and consider the entry~$d_{n}^{n-\ell-1} = d_n^{n-\ell} - d_{n-1}^{n-\ell-1}$. 
For a set $S$, we let $\mathcal{D}(S)\coloneqq \{\pi\in \calP(S): \pi(i)\neq i \text{ for all } i\in [|S|]\}$ denote the set of derangements on $S$.
Note that, by the induction hypothesis, $|\mathcal{D}(\{n-\ell+1,\ldots,2n-\ell\})| = d^{n-\ell}_n$, and that this set can be partitioned into the sets
\begin{align*}
\mathcal{D}' &\coloneqq \{\pi \in \mathcal{D}(\{n-\ell+1, \dots, 2n-\ell\}) : \pi(n-\ell) = 2n-\ell \},\text{ and} \\
\mathcal{D}'' &\coloneqq \{\pi \in \mathcal{D}(\{n-\ell+1, \dots, 2n-\ell\}) : \pi(n-\ell) \neq 2n-\ell \}.
\end{align*}
By removing the element~$2n-\ell$, there is a bijection between~$\mathcal{D}'$ and~$\mathcal{D}(\{n-\ell,\dots,2n-\ell-2\})$, whose size is $d^{n-\ell-1}_{n-1}$ by the induction hypothesis. 
On the other hand, by identifying the element~$2n-\ell$ with the element~$n-\ell$ there is also a bijection between~$\mathcal{D}''$ and~$\mathcal{D}(\{n-\ell, \dots, 2n-\ell-1\})$.
We conclude that the size of this set is $d^{n-\ell}_{n}-d^{n-\ell-1}_{n-1} = d^{n-\ell-1}_{n}$.
This shows the result.
\end{proof}

\subsection{Proof of \texorpdfstring{\Cref{lem:bounds}}{Lemma 2}}\label{app:lem:bounds}

\lemBounds*

\begin{proof}
For~$n \in \NN$, we have
\begin{align*}
d_n^0 &= n! \sum_{j=0}^n \frac{(-1)^j}{j!} = n! \Biggl(\sum_{j=0}^\infty \frac{(-1)^j}{j!} - \sum_{j=n+1}^{\infty} \frac{(-1)^j}{j!}\Biggr) = \frac{n!}{e} + \sum_{j=n+1}^\infty (-1)^{j+1} \frac{n!}{j!}.
\end{align*}
If~$n$ is even, this yields~$d_n^0 > \frac{n!}{e}$ and~$d_n^0 = \lfloor \frac{n!}{e} + r \rfloor$ for~$r \in [\frac{1}{n+1},1]$.
If~$n$ is odd, we have~$d_{n}^0 < \frac{n!}{e}$ and~$d_n^0 = \lfloor \frac{n!}{e} +r \rfloor$ whenever~$r  \leq 1 - \frac{1}{n+1}$. Both inequalities are satisfied for~$r \in \bigl[\frac{1}{3},\frac{1}{2}]$.

For~$n \in \NN$, we further have
\begin{align*}
d_n^1 &= d_n^0 + d_{n-1}^0 \\
&= n! \sum_{j=0}^n \frac{(-1)^j}{j!} + (n-1)! \sum_{j=0}^{n-1} \frac{(-1)^j}{j!} \\
&= \frac{n!}{e} + \sum_{j=n+1}^\infty (-1)^{j+1} \frac{n!}{j!} + \frac{(n-1)!}{e} + \sum_{j=n}^\infty (-1)^{j+1} \frac{(n-1)!}{j!} \\
&= \frac{n! + (n-1)!}{e} + (-1)^{n+1}\frac{1}{n} + \sum_{j=n+1}^\infty (-1)^{j+1} \frac{n! + (n-1)!}{j!}.
\intertext{Using that~$\frac{n! + (n-1)!}{(n+1)!} = \frac{1}{n}$ this simplifies to}
d_n^1 &= \frac{n! + (n-1)!}{e} + \sum_{j=n+2}^{\infty} (-1)^{j+1} \frac{n! + (n-1)!}{j!}.
\end{align*}
If~$n$ is odd, this yields~$d_n^1 > \frac{n! + (n-1)!}{e}$ and~$d_n^1 = \lfloor \frac{n! + (n-1)!}{e} + r\rfloor$ for~$r \in [\frac{1}{(n+1)(n+2)} + \frac{1}{n(n+1)(n+2)}, 1]$.  If~$n$ is even, we have~$d_n^1 < \frac{n! + (n-1)!}{e}$ and~$d_n^1 = \lfloor \frac{n! + (n-1)!}{e} + r \rfloor$ for~$r \leq 1 - \frac{1}{(n+1)(n+2)} - \frac{1}{n(n+1)(n+2)}$. Both inequalities are satisfied for~$r \in \bigl[ \frac{1}{3}, \frac{7}{8} \bigr]$.

For~$n \in \NN$ with~$n \geq 2$, we further have
\begin{align*}
d_n^2 &= d_n^1 + d_{n-1}^1 \\
&= d_n^0 + 2d_{n-1}^0 + d_{n-2}^0 \\
&= n! \sum_{j=0}^n \frac{(-1)^j}{j!} + 2(n-1)!\sum_{j=0}^{n-1} \frac{(-1)^j}{j!} + (n-2)!\sum_{j=0}^{n-2}\frac{(-1)^j}{j!} \\
&= \frac{n! + 2(n-1)! + (n-2)!}{e} + \sum_{j=n+1}^{\infty} (-1)^{j+1} \frac{n!}{j!} +  2\sum_{j=n}^{\infty} (-1)^{j+1} \frac{(n-1)!}{j!} +   \sum_{j=n-1}^{\infty} (-1)^{j+1} \frac{(n-2)!}{j!} \\
&= \frac{n! + 2(n-1)! + (n-2)!}{e} + (-1)^n\frac{1}{n-1} + (-1)^{n+1} \biggl(\frac{2}{n} + \frac{1}{n(n+1)}\biggr) \\
&\quad + \sum_{j=n+1}^\infty (-1)^{j+1}\frac{n!+2(n-1)!+(n-2)!}{j!}.
\end{align*}
It is straightforward to check that
\begin{align*}
\frac{1}{n+1} + \frac{2}{n(n+1)} + \frac{1}{(n-1)n(n+1)} + \frac{1}{n-1} - \biggl(\frac{2}{n} + \frac{1}{n(n+1)}\biggr) = \frac{1}{(n+1)(n-1)} \enspace.
\end{align*}
Using this equality, we obtain
\begin{align*}
d^2_n = \frac{n! + 2(n-1)! + (n-2)!}{e} + (-1)^n \frac{1}{(n+1)(n-1)} + \sum_{j=n+2}^\infty (-1)^{j+1} \frac{n! + 2(n-1)! + (n-2)!}{j!}.
\end{align*}
It can be checked that for~$n \geq 2$ we have that
\begin{align*}
\frac{1}{(n+1)(n-1)} > \frac{1}{(n+1)(n+2)} + \frac{2}{n(n+1)(n+2)} + \frac{1}{(n-1)n(n+1)(n+2)}.
\end{align*}
As a consequence, we obtain that if~$n$ is even, we have~$d_n^2 > \frac{n! + 2(n-1)! + (n-2)!}{e}$ and
\begin{align*}
d_n^2 = \bigg\lfloor \frac{n! + 2(n-1)! + (n-2)!}{e} + r \bigg\rfloor
\end{align*}
for~$r \in \bigl[ \frac{1}{(n+1)(n-1)}, 1\bigr]$. If~$n$ is odd, we have~$d_n^2 < \frac{n! + 2(n-1)! + (n-2)!}{e}$ and the formula is satisfied whenever~$r \leq 1 - \frac{1}{(n+1)(n-1)}$. Since~$n \geq 2$, both formulas are satisfied for~$r \in \bigl[ \frac{1}{3},\frac{7}{8}\bigr]$.
\end{proof}

\section{Proofs Deferred from \texorpdfstring{\Cref{sec:warm-up}}{Section 3}}

\subsection{Proof of \texorpdfstring{\Cref{prop:simple-aw}}{Proposition 1}}\label{app:prop:simple-aw}

\propSimpleAW*

\begin{proof}
    Let~$G$ be a graph as in the statement. 
    For any~$r\in\NN$, the probability of not meeting after~$r$ steps with the uniform strategy is~$\delta^r$, where we recall that~$\delta$ denotes the degree of all vertices in the complement graph of~$G$ divided by~$|V|$.
    For~$\SCFK$, we need to take two probabilities into account.
    On the one hand, the probability that two vertices chosen uniformly at random from the same clique do not have an edge between them is trivially~$0$.
    On the other hand, since vertices are fully connected to other vertices of the same clique, the probability that two vertices chosen uniformly at random from different cliques do not have an edge between them is the solution~$x$ to the equation
    \(
        \frac{1}{m}\cdot 0 + \frac{m-1}{m}\cdot x = \delta,
    \)
    which is~$x=\frac{m}{m-1}\delta$.

    To show the first inequality in the statement, we observe that
    \begin{equation}
        \PP[t_{\Vis_1(\mu)}> 1] = \frac{m}{m-1}\delta \bigg[\mu^2 + 2\mu(1-\mu) \frac{m-2}{m-1} + (1-\mu)^2 \bigg(\frac{1}{m-1}+\bigg(\frac{m-2}{m-1}\bigg)^2\bigg)\bigg],\label{eq:visibility-1round}
    \end{equation}
    because the players meet for sure if they visit the same clique in step~$1$, and do not meet with probability~$\frac{m}{m-1}\delta$ if they visit different cliques in step~$1$, which occurs
    \begin{enumerate}[label=(\roman*)]
        \item with probability~$1$ if both stay in the clique where they started;
        \item with probability~$1-1/(m-1)$ if exactly one of them moves to another clique, because this is the other player's clique with probability~$1/(m-1)$; and
        \item with probability~$\frac{1}{m-1}+\big(\frac{m-2}{m-1}\big)^2$ if both move to another clique, because they do not meet with probability~$1$ when one player visits the other player's former clique and with probability~$(m-2)/(m-1)$ otherwise.
    \end{enumerate}
    The expression on the right-hand side of \eqref{eq:visibility-1round} is not larger than~$\PP[t_{\Uni}>1]=\delta$ if and only if
    \begin{align*}
        \frac{m}{m-1} \bigg[ \mu^2 + & 2\mu(1-\mu) \frac{m-2}{m-1} + (1-\mu)^2 \bigg(\frac{1}{m-1}+\bigg(\frac{m-2}{m-1}\bigg)^2 \bigg)\bigg] \leq 1\\
        \Longleftrightarrow\ & m(m-1)^2\mu^2 + 2m(m-1)(m-2)\mu(1-\mu) + m(m^2-3m+3)(1-\mu)^2 \leq (m-1)^3\\
        \Longleftrightarrow\ & m\big((m-1)^2-2(m-1)(m-2)+(m^2-3m+3)\big) \mu^2 \\
        & + 2m\big((m-1)(m-2)-(m^2-3m+3)\big)\mu + m(m^2-3m+3) - (m-1)^3 \leq 0 \\
        \Longleftrightarrow\ & (m\mu - 1)^2 \leq 0,
    \end{align*}
    which holds with equality if and only if~$\mu=1/m$.
    For any other value of~$\mu$, we have the expression on the right-hand side of \eqref{eq:visibility-1round} is strictly larger than~$\delta$.
    This finishes the proof of the first inequality in the statement.

    For the second inequality, it suffices to prove that~$\PP[t_{\Vis_2(1/m)}>2] < \delta^2$.
    Indeed, if this holds, then for every~$k\in \NN$ and~$r\in \{0,1\}$ we have
    \[
        \PP\big[t_{\Vis_2(1/m)}>2k+r\big] = \PP\big[t_{\Vis_2(1/m)}>2\big]^k \delta^r < \delta^{2k+r} = \PP\big[t_{\Uni}>2k+r\big],
    \]
    where the first equality holds because~$\Vis_2(1/m)$ restarts and repeats after every pair of steps.
    Thus, we focus on proving that~$\PP[t_{\Vis_2(1/m)}> 2] < \delta^2$ holds in what follows.
    
    We first compute the probability that the players do not coincide in the same clique after two steps under~$\Vis_2(\mu)$, conditional on both players moving.
    By conditioning on the number of players visiting the home clique of the other in the first step, we obtain that this probability equals
    \begin{align*}
        \bigg(\frac{1}{m-1}\bigg)^2 & \frac{m-3}{m-2} + \frac{2(m-2)}{(m-1)^2}\bigg( \frac{1}{m-2}+\bigg(\frac{m-3}{m-2}\bigg)^2\bigg)\\
        & + \frac{(m-2)(m-3)}{(m-1)^2} \bigg(\frac{2}{m-2}+\frac{(m-4)(m-3)}{(m-2)^2}\bigg) = \frac{m^3-6m^2+14m-13}{(m-1)^2(m-2)}.
    \end{align*}
    Indeed, the first term corresponds to both players visiting the other's home clique in the first step.
    This event occurs with probability~$1/(m-1)^2$ and, conditional on it, both players choose a clique uniformly at random in the second step from the same set of~$m-2$ cliques, so they choose different ones with probability~$(m-3)/(m-2)$.
    The second term corresponds to exactly one player visiting the other's home clique in the first step, which occurs with probability~$2(m-2)/(m-1)^2$.
    Conditional on this event, without loss of generality, let us assume that player~$1$ visits the home clique of player~$2$ in the first step, the players do not meet in the second step if either player~$2$ visits the home clique of player~$1$, which occurs with probability~$1/(m-2)$, or both visit different cliques that are not the home clique of any of them, which occurs with probability~$(m-3)^2/(m-2)^2$.
    The third term corresponds to the players visiting different cliques that are not the home clique of any of them in the first step, which occurs with probability~$(m-2)(m-3)/(m-1)^2$.
    Conditional on this event, the players do not meet in the second step if either player~$1$ visits a clique that player~$2$ already visited, which occurs with probability~$2/(m-2)$, or player~$1$ visits another clique but player~$2$ does not choose the same one, which occurs with probability~$(m-3)(m-4)/(m-2)^2$.

    We can now compute the non-meeting probability~$\Vis_2(\mu)$ after two steps similarly to the case before, by conditioning on the number of players that move across cliques:
    \begin{equation*}
        \PP[t_{\Vis_2(\mu)}> 2] = \bigg(\frac{m}{m-1}\bigg)^2\delta^2 \bigg[\mu^2 + 2\mu(1-\mu) \frac{m-3}{m-1} + (1-\mu)^2 \frac{m^3-6m^2+14m-13}{(m-1)^2(m-2)}\bigg].
    \end{equation*}
    This expression is strictly smaller than~$\PP[t_{\Uni}>2]=\delta^2$ if and only if
    \begin{align*}
        \bigg(\frac{m}{m-1}\bigg)^2 \bigg[\mu^2 & + 2\mu(1-\mu) \frac{m-3}{m-1} + (1-\mu)^2 \frac{m^3-6m^2+14m-13}{(m-1)^2(m-2)}\bigg] < 1\\
        \Longleftrightarrow\ & m^2(m-1)^2(m-2)\mu^2 + 2m^2(m-1)(m-2)(m-3)\mu(1-\mu) \\
        & + m^2(m^3-6m^2+14m-13)(1-\mu)^2 < (m-1)^4(m-2)\\
        \Longleftrightarrow\ & m^2\big((m-1)^2(m-2) - 2(m-1)(m-2)(m-3) + (m^3-6m^2+14m-13) \big) \mu^2\\
        & +2m^2 \big( (m-1)(m-2)(m-3) - (m^3-6m^2+14m-13) \big) \mu \\
        & + m^2(m^3-6m^2+14m-13) -  (m-1)^4(m-2) < 0 \\
        \Longleftrightarrow\ & m^2(2m^2-3m-3)\mu^2 -2m^2(3m-7)\mu + 3m^2-9m+2 < 0.
    \end{align*}
    Taking~$\mu=1/m$, the left-hand side becomes 
    \[
    2m^2-3m-3-6m^2+14m+3m^2-9m+2 = -(m-1)^2,
    \]
    which is strictly negative for any~$m>1$.
\end{proof}

\section{Proofs Deferred from \texorpdfstring{\Cref{subsec:meeting-prob}}{Section 4.1}}

\subsection{Proof of \texorpdfstring{\Cref{lem:probs-first-locs}}{Lemma 3}}\label{app:lem:probs-first-locs}

\lemProbsFirstLocs*

\begin{proof}
    Fix~$n$ and~$i$ as in the statement, and~$\theta\in [0,1)$ arbitrarily.
    Conditional on moving in the first three rounds, the locations~$\pi^i_{\CFK(\theta),1}(1)$ and~$\pi^i_{\CFK(\theta),2}(1)$ are chosen uniformly at random among the non-home locations of player~$i$, i.e., among~$[n]\setminus \{i\}$.
    In addition,~$\pi^i_{\CFK(\theta),3}(1)$ is set to the same location~$\pi^i_{\CFK(\theta),1}(1)$ if~$\pi^i_{\CFK(\theta),1}(1)=\pi^i_{\CFK(\theta),2}(1)$ and to a different location in~$[n]\setminus \{i,\pi^i_{\CFK(\theta),1}(1),\pi^i_{\CFK(\theta),2}(1)\}$ chosen uniformly at random, otherwise.
    Thus, we obtain
    \begin{align*}
        w^i_{1} & = \PP\big[\pi^i_{\CFK(\theta),1}(1)=\pi^i_{\CFK(\theta),2}(1)=3-i\big] \\
        & = \PP\big[\pi^i_{\CFK(\theta),1}(1)=3-i\big]\PP\big[\pi^i_{\CFK(\theta),2}(1)=3-i\big] = \frac{1}{(n-1)^2},\\
        w^i_{2} & = \PP\big[\pi^i_{\CFK(\theta),1}(1)=\pi^i_{\CFK(\theta),2}(1)\neq 3-i\big] \\
        & = \PP\big[\pi^i_{\CFK(\theta),1}(1)\neq 3-i\big] \PP\big[\pi^i_{\CFK(\theta),2}(1)=\pi^i_{\CFK(\theta),1}(1)\big] = \frac{n-2}{n-1} \cdot \frac{1}{n-1} = \frac{n-2}{(n-1)^2},\\
        w^i_{3} & = \PP\big[\pi^i_{\CFK(\theta),1}(1)\neq \pi^i_{\CFK(\theta),2}(1)\big] \\
        & \phantom{{}={}} \cdot \PP\big[3-i \in \{\pi^i_{\CFK(\theta),1}(1),\pi^i_{\CFK(\theta),2}(1),\pi^i_{\CFK(\theta),3}(1)\} \;\big\vert\; \pi^i_{\CFK(\theta),1}(1)\neq \pi^i_{\CFK(\theta),2}(1)\big]\\
        & = \frac{n-2}{n-1} \cdot \frac{3}{n-1} = \frac{3(n-2)}{(n-1)^2},\\
        w^i_{4} & = \PP\big[\pi^i_{\CFK(\theta),1}(1)\neq \pi^i_{\CFK(\theta),2}(1)\big] \\
        & \phantom{{}={}} \cdot \PP\big[3-i \notin \{\pi^i_{\CFK(\theta),1}(1),\pi^i_{\CFK(\theta),2}(1),\pi^i_{\CFK(\theta),3}(1)\} \;\big\vert\; \pi^i_{\CFK(\theta),1}(1)\neq \pi^i_{\CFK(\theta),2}(1)\big] \\
        & = \frac{n-2}{n-1} \cdot \frac{n-4}{n-1} = \frac{(n-2)(n-4)}{(n-1)^2}.\qedhere
    \end{align*}
\end{proof}

\subsection{Proof of \texorpdfstring{\Cref{lem:cond-prob-formula}}{Lemma 4}}\label{app:lem:cond-prob-formula}

\lemCondProb*

\begin{proof}
    We fix~$n\in \NN$ with~$n\geq 4$, and we begin by applying total probabilities to obtain
    \begin{align}
        \PP\big[\bar{\meet}^{\leq3}_{\CFK(\theta)} \;\big\vert\; \chi^1_{\CFK(\theta)}\!=\!\chi^2_{\CFK(\theta)}\!& =\!\move^3 \big] = \nonumber\\
        & \sum_{k,\ell\in [4]} \PP\big[\bar{\meet}^{\leq3}_{\CFK(\theta)} \;\big\vert\; \chi^1_{\CFK(\theta)}\!=\!\chi^2_{\CFK(\theta)}\!=\!\move^3 ,\ A^1_{k},\ A^2_{\ell} \big] \cdot w^1_{k} \cdot w^2_{\ell} \label{eq:total-probs}.
    \end{align}
    We denote the first term in the sum by \[Z_{k\ell}\coloneqq \PP\big[\bar{\meet}^{\leq3}_{\CFK(\theta)} \;\big\vert\; \chi^1_{\CFK(\theta)}\!=\!\chi^2_{\CFK(\theta)}\!=\!\move^3 ,\ A^1_{k},\ A^2_{\ell} \big],\] corresponding to the non-meeting probability under~$\CFK(\theta)$ conditional on both players moving in the first three rounds and on the events~$A^1_{k}$ and~$A^2_{\ell}$, for~$k,\ell\in [4]$.
    It is clear that~$Z\in [0,1]^{4\times 4}$ is a symmetric matrix; we now compute its entries on and above the diagonal.
    We repeatedly use \Cref{lem:shifted-derangements} to compute the non-meeting probabilities in steps~$2,\ldots,n-1$ of each round conditional on the first locations: if these first locations are each other's home locations, the corresponding non-meeting probability is~$\hat{d}^0_{n-2}$, if exactly one of these first locations is the other player's home location, the corresponding non-meeting probability is~$\hat{d}^1_{n-2}$; if none of these first locations are each other's home locations and they are different, the corresponding non-meeting probability is~$\hat{d}^2_{n-2}$.

    When we condition on~$A^1_{1}$, we have~$\pi^1_{\CFK(\theta),r}(1)=2$ and thus~$\pi^1_{\CFK(\theta),r}(1)\neq \pi^2_{\CFK(\theta),r}(1)$ for each~$r\in [3]$.
    If~$\pi^2_{\CFK(\theta),r}(1)=1$ for a round~$r\in [3]$, the meeting probability in later steps of that round (between~$(r-1)(n-1)+2$ and~$r(n-1)$) is~$1-\hat{d}^0_{n-2}$.
    Similarly, if~$\pi^2_{\CFK(\theta),r}(1)\neq 1$ for a round~$r\in [3]$, the meeting probability in later steps of that round is~$1-\hat{d}^1_{n-2}$.
    Therefore,
    \[
        Z_{11} = (\hat{d}^0_{n-2})^3,\qquad
        Z_{12} = (\hat{d}^1_{n-2})^3,\qquad
        Z_{13} = \hat{d}^0_{n-2} (\hat{d}^1_{n-2})^2,\qquad 
        Z_{14} = (\hat{d}^1_{n-2})^3.
    \]

    When we condition on~$A^1_{2}$, we have~$\pi^1_{\CFK(\theta),r}(1)\neq 2$ for all~$r\in [3]$, so the event~$\pi^1_{\CFK(\theta),r}(1)= \pi^2_{\CFK(\theta),r}(1)$ for some~$r\in [3]$ has non-zero probability.
    Indeed, it occurs with probability~$1/(n-2)$ conditional on~$A^2_{2}$, with probability~$2/(n-2)$ conditional on~$A^2_{3}$, and with probability~$3/(n-2)$ conditional on~$A^2_{4}$.
    In addition, if~$\pi^2_{\CFK(\theta),r}(1)=1$ for a round~$r\in [3]$, the meeting probability in later steps of that round is~$1-\hat{d}^1_{n-2}$.
    If~$\pi^2_{\CFK(\theta),r}(1)\neq 1$ for a round~$r\in [3]$, the meeting probability in later steps of that round, conditional on~$\pi^1_{\CFK(\theta),r}(1)\neq \pi^2_{\CFK(\theta),r}(1)$, is~$1-\hat{d}^2_{n-2}$.
    Therefore,
    \[
        Z_{22} = \frac{n-3}{n-2} (\hat{d}^2_{n-2})^3,\qquad
        Z_{23} = \frac{n-4}{n-2} \hat{d}^1_{n-2} (\hat{d}^2_{n-2})^2,\qquad
        Z_{24} = \frac{n-5}{n-2} (\hat{d}^2_{n-2})^3.
    \]

    When we condition on~$A^1_{3}$ and~$A^2_{3}$, we have~$\pi^1_{\CFK(\theta),r}(1)=2$ and~$\pi^2_{\CFK(\theta),s}(1)=1$ for some rounds~$r,s\in [3]$, so we need to further condition on whether~$r=s$, which occurs with probability~$1/3$, or~$r\neq s$, which occurs with probability~$2/3$.
    If~$r=s$, we have~$\pi^1_{\CFK(\theta),r}(1)\neq \pi^2_{\CFK(\theta),r}(1)$ and the meeting probability in later steps of that round is~$1-\hat{d}^0_{n-2}$.
    The event~$\pi^1_{\CFK(\theta),r'}(1)= \pi^2_{\CFK(\theta),r'}(1)$ for some other round~$r'\in [3]\setminus \{r\}$ occurs with probability~$\frac{1}{n-2}+\frac{n-4}{n-2}\cdot \frac{1}{n-3}$.
    The meeting probability in later steps of a round~$r'\in [3]\setminus \{r\}$, conditional on~$\pi^1_{\CFK(\theta),r'}(1)\neq \pi^2_{\CFK(\theta),r'}(1)$, is~$1-\hat{d}^2_{n-2}$.
    If~$r\neq s$, we have~$\pi^1_{\CFK(\theta),r'}(1)\neq \pi^2_{\CFK(\theta),r'}(1)$ for~$r'\in \{r,s\}$, and the meeting probability in later steps of these rounds is~$1-\hat{d}^1_{n-2}$.
    The event~$\pi^1_{\CFK(\theta),r'}(1)= \pi^2_{\CFK(\theta),r'}(1)$ for the round~$r'\in [3]\setminus \{r,s\}$ occurs with probability~$\frac{1}{n-2}$.
    The meeting probability in later steps of the round~$r'\in [3]\setminus \{r,s\}$, conditional on~$\pi^1_{\CFK(\theta),r'}(1)\neq \pi^2_{\CFK(\theta),r'}(1)$, is~$1-\hat{d}^2_{n-2}$.
    Combining these probabilities, we obtain
    \[
        Z_{33} = \frac{1}{3} \bigg( 1-\bigg(\frac{1}{n-2}+\frac{n-4}{n-2}\cdot \frac{1}{n-3}\bigg)\bigg)  \hat{d}^0_{n-2} (\hat{d}^2_{n-2})^2 + \frac{2}{3} \bigg( 1-\frac{1}{n-2}\bigg) (\hat{d}^1_{n-2})^2 \hat{d}^2_{n-2}.
    \]

    When we condition on~$A^1_{3}$ and~$A^2_{4}$, we have~$\pi^1_{\CFK(\theta),r}(1)=2$ for some round~$r\in [3]$.
    This implies~$\pi^1_{\CFK(\theta),r}(1)\neq \pi^2_{\CFK(\theta),r}(1)$ and the meeting probability in later steps of that round is~$1-\hat{d}^1_{n-2}$.
    The event~$\pi^1_{\CFK(\theta),s}(1)= \pi^2_{\CFK(\theta),s}(1)$ for some other round~$s\in [3]\setminus \{r\}$ occurs with probability~$\frac{1}{n-2}+\frac{n-4}{n-2}\cdot \frac{1}{n-3}$.
    The meeting probability in later steps of a round~$s\in [3]\setminus \{r\}$, conditional on~$\pi^1_{\CFK(\theta),s}(1)\neq \pi^2_{\CFK(\theta),s}(1)$, is~$1-\hat{d}^2_{n-2}$.
    Therefore,
    \[
        Z_{34} = \bigg( 1-\bigg(\frac{1}{n-2}+\frac{n-4}{n-2}\cdot \frac{1}{n-3}\bigg)\bigg)  \hat{d}^1_{n-2} (\hat{d}^2_{n-2})^2.
    \]

    Finally, when we condition on~$A^1_{4}$ and~$A^2_{4}$, we compute the probability that~$\pi^1_{\CFK(\theta),r}(1)\neq \pi^2_{\CFK(\theta),r}(1)$ for every round~$r\in [3]$ by distinguishing whether the event~$\pi^2_{\CFK(\theta),1}(1)\in \{\pi^1_{\CFK(\theta),2}(1),\pi^1_{\CFK(\theta),3}(1)\}$ holds, which occurs with probability~$2/(n-2)$.
    If it does hold, say w.l.o.g.~$\pi^2_{\CFK(\theta),1}(1)=\pi^1_{\CFK(\theta),2}(1)$, then~$\pi^1_{\CFK(\theta),r}(1)\neq \pi^2_{\CFK(\theta),r}(1)$ for~$r\in [2]$, and we have~$\pi^1_{\CFK(\theta),r}(1)\neq \pi^2_{\CFK(\theta),r}(1)$ for every round~$r\in [3]$ whenever~$\pi^1_{\CFK(\theta),3}(1)\neq \pi^2_{\CFK(\theta),3}(1)$, which occurs with probability~$(n-4)/(n-3)$ conditional on the former event.
    Conversely, conditional on the fact that~$\pi^2_1(1)\notin \{\pi^1_2(1),\pi^1_3(1)\}$, we have~$\pi^1_{\CFK(\theta),r}(1)\neq \pi^2_{\CFK(\theta),r}(1)$ for every round~$r\in [3]$ if either~$\pi^2_{\CFK(\theta),2}(1)=\pi^1_{\CFK(\theta),3}(1)$ holds, which occurs with probability~$1/(n-3)$, or if both~$\pi^2_{\CFK(\theta),2}(1)\notin \{\pi^1_{\CFK(\theta),2}(1),\pi^1_{\CFK(\theta),3}(1)\}$ and~$\pi^2_{\CFK(\theta),3}(1)\neq \pi^1_{\CFK(\theta),3}(1)$ hold, which occurs with overall probability~$\frac{n-5}{n-3} \cdot \frac{n-5}{n-4}$.
    Regarding the later steps of each round, conditional on having~$\pi^1_{\CFK(\theta),r}(1)\neq \pi^2_{\CFK(\theta),r}(1)$ for every round~$r\in [3]$, the meeting probability in later steps of each of these rounds is~$1-\hat{d}^2_{n-2}$.
    Therefore,
    \begin{align*}
        Z_{44} & = \bigg( \frac{n-5}{n-2}\bigg(\frac{n-5}{n-3}\cdot \frac{n-5}{n-4} + \frac{1}{n-3}\bigg) +\frac{2}{n-2}\cdot \frac{n-4}{n-3}\bigg) (\hat{d}^2_{n-2})^3.
    \end{align*}

Combining \eqref{eq:total-probs} with \Cref{lem:probs-first-locs} and the previous expressions for~$Z_{k\ell}$, we obtain
\begingroup
\allowdisplaybreaks
\begin{align*}
    \PP\big[& \bar{\meet}^{\leq3}_{\CFK(\theta)} \;\big\vert\; \chi^1_{\CFK(\theta)}\!=\!\chi^2_{\CFK(\theta)}\!=\!\move \big] \\
    = & \sum_{k\in [4]}\sum_{\ell\in [4]} w^1_{k} w^2_{\ell} Z_{k\ell}\\
    = & \frac{1}{(n-1)^4} (\hat{d}^0_{n-2})^3 + \frac{2(n-2)}{(n-1)^4}(\hat{d}^1_{n-2})^3 + \frac{6(n-2)}{(n-1)^4}\hat{d}^0_{n-2} (\hat{d}^1_{n-2})^2 + \frac{2(n-2)(n-4)}{(n-1)^4}(\hat{d}^1_{n-2})^3\\
    & + \frac{(n-2)^2}{(n-1)^4} \cdot \frac{n-3}{n-2} (\hat{d}^2_{n-2})^3 + \frac{6(n-2)^2}{(n-1)^4} \cdot \frac{n-4}{n-2} \hat{d}^1_{n-2} (\hat{d}^2_{n-2})^2 + \frac{2(n-2)^2(n-4)}{(n-1)^4} \cdot \frac{n-5}{n-2} (\hat{d}^2_{n-2})^3\\
    & + \frac{9(n-2)^2}{(n-1)^4} \bigg[ \frac{1}{3} \bigg( 1-\bigg(\frac{1}{n-2}+\frac{n-4}{n-2}\cdot \frac{1}{n-3}\bigg)\bigg)  \hat{d}^0_{n-2} (\hat{d}^2_{n-2})^2 + \frac{2}{3} \bigg( 1-\frac{1}{n-2}\bigg) (\hat{d}^1_{n-2})^2 \hat{d}^2_{n-2}\bigg]\\
    & + \frac{6(n-2)^2(n-4)}{(n-1)^4} \cdot \bigg( 1-\bigg(\frac{1}{n-2}+\frac{n-4}{n-2}\cdot \frac{1}{n-3}\bigg)\bigg)  \hat{d}^1_{n-2} (\hat{d}^2_{n-2})^2\\
    & + \frac{(n-2)^2(n-4)^2}{(n-1)^4} \cdot \bigg( \frac{n-5}{n-2}\bigg(\frac{n-5}{n-3}\cdot \frac{n-5}{n-4} + \frac{1}{n-3}\bigg) +\frac{2}{n-2}\cdot \frac{n-4}{n-3}\bigg) (\hat{d}^2_{n-2})^3\\
    = & \frac{1}{(n-1)^4(n-3)} \Big[(n-3)(\hat{d}^0_{n-2})^3 + 6(n-2)(n-3)\hat{d}^0_{n-2} (\hat{d}^1_{n-2})^2 \\[-7pt]
    & \phantom{\frac{1}{(n-1)^4(n-3)} \bigg[} + 3(n-2)(n^2-7n+13) \hat{d}^0_{n-2} (\hat{d}^2_{n-2})^2 + 2(n-2)(n-3)^2(\hat{d}^1_{n-2})^3\\[-7pt]
    & \phantom{\frac{1}{(n-1)^4(n-3)} \bigg[} + 6(n-2)(n-3)^2(\hat{d}^1_{n-2})^2 \hat{d}^2_{n-2} + 6(n-2)(n-4)(n^2-6n+10)\hat{d}^1_{n-2} (\hat{d}^2_{n-2})^2\\[-7pt]
    & \phantom{\frac{1}{(n-1)^4(n-3)} \bigg[} + (n^5-16n^4+103n^3-335n^2+551n-362) (\hat{d}^2_{n-2})^3 \Big].
\end{align*}
\endgroup

For~$n\in \{4,5,6,7\}$, we evaluate this expression explicitly to obtain the values in the statement.
For~$n=4$ we have that
\[
    \hat{d}^1_3 = \frac{3}{6} = \frac{1}{2},\quad \hat{d}^0_2 = \frac{1}{2},\quad \hat{d}^1_2 = \frac{1}{2},\quad \hat{d}^2_2 = 1,
\]
so replacing in the expression above yields
\begin{align*}
    \PP\big[\bar{\meet}^{\leq3}_{\CFK(\theta)} \;\big\vert\; \chi^1_{\CFK(\theta)}& \!=\!\chi^2_{\CFK(\theta)}\!=\!\move^3 \big]= \\
    & \frac{1}{81}\bigg( \frac{1}{8}+12\cdot \frac{1}{2}\cdot\frac{1}{4} +6\cdot \frac{1}{2}\cdot 1 +4\cdot \frac{1}{8} + 12\cdot \frac{1}{4}\cdot 1 + 12\cdot 0 \cdot 2 \cdot \frac{1}{2}\cdot 1 + 2\cdot 1 \bigg) = \frac{1}{8}.
\end{align*}
For~$n=5$ we have that
\[
    \hat{d}^1_4 = \frac{11}{24},\quad \hat{d}^0_3 = \frac{2}{6} = \frac{1}{3},\quad \hat{d}^1_3 = \frac{3}{6}=\frac{1}{2},\quad \hat{d}^2_3 = \frac{4}{6}=\frac{2}{3},
\]
so replacing in the expression above yields
\begin{align*}
    \PP\big[\bar{\meet}^{\leq3}_{\CFK(\theta)} \;\big\vert\; & \chi^1_{\CFK(\theta)}\!=\!\chi^2_{\CFK(\theta)}\!=\!\move^3 \big] = \\
    & \frac{1}{512}\bigg( 2\cdot \frac{1}{27}+36\cdot \frac{1}{3}\cdot\frac{1}{4} +27\cdot \frac{1}{3}\cdot \frac{4}{9} +24\cdot \frac{1}{8} + 72\cdot \frac{1}{4}\cdot \frac{2}{3} + 90\cdot \frac{1}{2}\cdot \frac{4}{9} + 18\cdot \frac{8}{27} \bigg) = \frac{5}{54}.
\end{align*}
For~$n=6$ we have that
\[
    \hat{d}^1_5 = \frac{53}{120},\quad \hat{d}^0_4 = \frac{9}{24} = \frac{3}{8},\quad \hat{d}^1_4 = \frac{11}{24},\quad \hat{d}^2_4 = \frac{14}{24}=\frac{7}{12},
\]
so replacing in the expression above yields
\begin{align*}
    \PP\big[\bar{\meet}^{\leq3}_{\CFK(\theta)} \;\big\vert\; \chi^1_{\CFK(\theta)}\!=\!\chi^2_{\CFK(\theta)}& \!=\!\move^3 \big] =\\
    & \frac{1}{1875}\bigg( 3\cdot \frac{729}{13824}+72\cdot \frac{9}{24}\cdot\frac{121}{576} +84\cdot \frac{9}{24}\cdot \frac{196}{576} +72\cdot \frac{1331}{13824}  \\
    & \phantom{\frac{1}{1875}\bigg(} + 216\cdot \frac{121}{576}\cdot \frac{14}{24} + 480 \cdot \frac{11}{24}\cdot \frac{196}{576} + 172\cdot \frac{2744}{13824}\bigg) = \frac{439471}{5184000}.
\end{align*}
For~$n=7$ we have that
\[
    \hat{d}^1_6 = \frac{309}{720}=\frac{103}{240},\quad \hat{d}^0_5 = \frac{44}{120} = \frac{11}{30},\quad \hat{d}^1_5 = \frac{53}{120},\quad \hat{d}^2_5 = \frac{64}{120}=\frac{8}{15},
\]
so replacing in the expression above yields
\begin{align*}
    \PP\big[\bar{\meet}&^{\leq3}_{\CFK(\theta)} \;\big\vert\; \chi^1_{\CFK(\theta)}\!=\!\chi^2_{\CFK(\theta)}\!=\!\move^3 \big] = \\
    & \frac{1}{5184}\bigg( 4\cdot \frac{85184}{1728000}+120\cdot \frac{44\cdot 2809}{1728000} +195\cdot \frac{44\cdot 4096}{1728000} +160\cdot \frac{148877}{1728000}  \\
    & \phantom{\frac{1}{5184}\bigg( }+ 480\cdot \frac{2809\cdot 64}{1728000} + 1530 \cdot \frac{53\cdot 4096}{1728000} + 800\cdot \frac{262144}{1728000} \bigg) = \frac{702288576}{8957952000} = \frac{406417}{5184000}.\qedhere
\end{align*}
\end{proof}

\subsection{Proof of \texorpdfstring{\Cref{lem:improvement-prob}}{Lemma 5}}\label{app:lem:improvement-prob}

\lemImprovementProb*

Before proving the lemma, we state a result that will be used for this purpose. 
\begin{lemma}\label{claim:monotone-terms}
    Let~$r_1,r_2,r_3,r_4\colon\NN\to\RR$ be the functions such that, for~$m\in \NN$,
    \begin{align*}
    r_1(m) & = \frac{8m^7-88m^6+360m^5-664m^4+520m^3-168m^2+184m-152}{(m-2)e^3},\\
    r_2(m) & = \frac{512m^5-5120m^4+21504m^3-49125m^2+62868m-36271}{1728(m-2)^2(m-4)!^3},\\
    r_3(m) & = \frac{64m^6-704m^5+3127m^4-7233m^3+9081m^2-5103m+128}{24(m-2)(m-4)!^2e},\\
    r_4(m) & = \frac{8m^7-93m^6+426m^5-980m^4+1182m^3-631m^2-8m-24}{(m-4)!e^2}.
    \end{align*}
    Then~$r_1$ is increasing and~$r_2,r_3,r_4$ are decreasing for~$m\geq 8$.
\end{lemma}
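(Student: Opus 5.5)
The plan is to treat each $r_j$ separately. Write it as a ratio of a polynomial and a factorial-type factor, and compare consecutive values $r_j(m+1)$ and $r_j(m)$ for $m\ge 8$. The constants $e^{-1},e^{-2},e^{-3}$ and $1728$, $24$ are positive and play no role, so we drop them. Call the numerator polynomials $p_1,p_2,p_3,p_4$.

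For $r_1$ it suffices to show $f(m)=p_1(m)/(m-2)$ is increasing for $m\ge 8$. Perform polynomial division: $p_1(m)=(m-2)q(m)+c$ with $q$ of degree $6$ and $c=p_1(2)$ a constant. Then $f=q+c/(m-2)$ and $f'(m)=q'(m)-c/(m-2)^2$. Check that $q'(m)$ is positive and large for $m\ge 8$: its leading term is $48m^5$, and the lower-order terms can be bounded crudely, e.g.\ by writing $m=8+x$ with $x\ge 0$ and observing that all coefficients in $x$ become nonnegative. This dominates the bounded term $|c|/(m-2)^2\le |c|/36$. Alternatively, substitute $m=8+x$ directly into the numerator of $f'$, namely $p_1'(m)(m-2)-p_1(m)$, and verify that all coefficients are positive.

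For $r_2,r_3,r_4$ use the ratio test. Since $(m-3)!=(m-3)(m-4)!$, one has
\[
\frac{r_4(m+1)}{r_4(m)}=\frac{p_4(m+1)}{(m-3)\,p_4(m)},
\]
so it suffices to show $p_4(m)>0$ and $(m-3)p_4(m)-p_4(m+1)>0$ for $m\ge 8$. The latter is a polynomial of degree $8$ with leading coefficient $8$. Verify its positivity, and that of $p_4$, by substituting $m=8+x$ and checking that all coefficients are nonnegative with a positive constant term. Analogously, for $r_3$ show
\[
(m-3)^2(m-1)\,p_3(m)-(m-2)\,p_3(m+1)>0,
\]
and for $r_2$ show
\[
(m-3)^3(m-1)^2\,p_2(m)-(m-2)^2\,p_2(m+1)>0.
\]
In each case the factorial gain, a factor $(m-3)^k$ with $k\ge 2$, overwhelms the polynomial growth $p(m+1)/p(m)\approx 1$. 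Consequently the leading coefficients are positive and the substitution $m=8+x$ should give nonnegative coefficients. Positivity of $p_2$ and $p_3$ at $m\ge 8$ is checked the same way. Here "increasing/decreasing" is read over integers $m\ge 8$, which is all that the ratio test needs. For $r_1$ one can equally use the discrete difference $r_1(m+1)-r_1(m)>0$.

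The main obstacle is purely computational: expanding the shifted polynomials and confirming sign patterns, particularly for $r_1$, where there is no factorial to help and the verification rests on genuine polynomial positivity. If some coefficient after the shift $m=8+x$ turns out negative, the fallback is to group it with a higher-degree term using $x\ge 0$, e.g.\ $ax^k-bx^{k-1}\ge -b^{k}/a^{k-1}$-type bounds, or to shift to a larger base point $m_0$ and check the finitely many values $8\le m<m_0$ directly.
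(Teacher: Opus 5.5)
Your proposal is correct, and the reduction is the paper's. Your ratio conditions are equivalent to the sign conditions on $r_j(m)-r_j(m+1)$, and the polynomials you obtain are exactly the paper's numerators. For $r_2$ and $r_4$ they coincide. For $r_3$, the paper's degree-$11$ polynomial is $(m-1)(m-2)$ times your degree-$9$ polynomial $(m-3)^2(m-1)p_3(m)-(m-2)p_3(m+1)$. For $r_1$, the paper uses the discrete difference, which is your second option.

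Where you differ is the positivity certificate.
\begin{itemize}
\item \emph{The paper's certificate.} It uses adjacent-coefficient domination. For $r_1$ this works from $m=8$, but for $r_2,r_3$ it only kicks in at $m\ge 21$ and for $r_4$ at $m\ge 16$. The remaining values are then covered by a numerical table.
\item \emph{Your certificate.} The shift $m=8+x$ gives one uniform certificate, and it succeeds at base point $8$ in every case, so your fallback is never needed. For $r_3$ the polynomial is $64m^9-1152m^8+8951m^7-39810m^6+111746m^5-203689m^4+236127m^3-159774m^2+49951m-2432$. After three rounds of synthetic division by $m-8$, the remaining quotient has only positive coefficients; the constant Taylor coefficient is $536688120$. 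The same happens for $r_2$ (constant term $25028570525$, matching the paper's $2.50286\cdot 10^{10}$), for $r_4$ (after two rounds), and for $p_2,p_3,p_4$.
\end{itemize}

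Your derivative route for $r_1$ is even easier than you expect. Since $p_1(2)=-8$, you get $f'=q'+8/(m-2)^2$, so the remainder term helps rather than hurts. Then $q'(m)=48m^5-360m^4+864m^3-696m^2+112m-56$ already has positive Taylor coefficients at $8$. This gives monotonicity on the whole real half-line, which is slightly more than needed.

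One harmless slip: for $r_4$ the factorial gain is only $(m-3)^1$, not $(m-3)^k$ with $k\ge 2$.

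In short, the paper's pairing argument is quick to check by eye but needs a finite table of values. Your Taylor shift is fully mechanical and avoids any case split, at the cost of a few rounds of synthetic division.
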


\begin{proof}[Proof of \Cref{claim:monotone-terms}]
    We compute the difference of any two consecutive terms of each function and show that it has the desired sign.
    For~$r_1$, we obtain
    \[
        r_1(m+1)-r_1(m) = \frac{48m^7-384m^6+1120m^5-1392m^4+576m^3+16m^2+176m-152}
        {(m-1)(m-2)e^3}.
    \]
    The denominator is clearly positive for every~$m\geq 8$. 
    As for the numerator, we can show its positivity when~$m \geq 8$ from the fact that the (positive) coefficient of each term corresponding to an odd power of~$m$, say the~$k$th power, times~$8$, is larger than the (potentially negative) coefficient corresponding to the~$(k-1)$th power, with a strict inequality for~$k\leq 5$. This yields, for example~$48m^7 \geq 384m^6$,~$1120m^5\geq 8960m^4 > 1392m^4$, and similarly for the smaller powers.
    We conclude that~$r_1$ is increasing in~$m\geq 8$.

    For~$r_2$, we obtain
    \begin{align*}
        r_2(m)-r_2(m+1) & =  \frac{(m-1)(m-2)}{1728(m-1)!^3} \big(512m^{10}-10752m^9+101376m^8-567781m^7+2099307m^6\\[-5pt]
        & \phantom{= \frac{(m-1)(m-2)}{1728(m-1)!^3} \big(} -5369905m^5+9637252m^4-11962957m^3+9774397m^2\\[-5pt]
        & \phantom{= \frac{(m-1)(m-2)}{1728(m-1)!^3} \big(}-4702755m+1001845\big). 
    \end{align*}
    The term outside the parentheses is clearly positive for every~$m\geq 8$.
    The term in the parentheses has the following approximate values for~$8\leq m\leq 20$:~$2.50286\cdot 10^{10},\ 1.21744\cdot 10^{11},\ 4.75447\cdot 10^{11},\ 1.57424\cdot 10^{12},\ 4.58175\cdot 10^{12},\ 1.20218\cdot 10^{13},\ 2.89666\cdot 10^{13},\ 6.49912\cdot 10^{13},\ 1.37247\cdot 10^{14},\ 2.75120\cdot 10^{14},\ 5.27073\cdot 10^{14},\ 9.70415\cdot 10^{14},\ 1.72494\cdot 10^{15}$.
    For~$m\geq 21$, it is clearly positive since, similarly as before, we have that the coefficient multiplying an even power of~$m$, say~$m^k$ with~$k\in [10]$ even, is larger than the coefficient multiplying~$m^{k-1}$ divided by~$21$, with strict inequality for all powers of~$m$ smaller than~$8$. 
    We conclude that~$r_2$ is decreasing in~$m\geq 8$.

    For~$r_3$ we obtain
    \begin{align*}
        r_3(m)-r_3(m+1) & =  \frac{1}{24(m-1)!^2e} \big(64m^{11}-1344m^{10}+12535m^9-68967m^8+249078m^7\\[-5pt]
        & \phantom{= \frac{1}{24(m-1)!^2e} \big(} -618547m^6+1070686m^5-1275533m^4+1001527m^3\\[-5pt]
        & \phantom{= \frac{1}{24(m-1)!^2e} \big(}-471833m^2+107198m-4864\big). 
    \end{align*}
    The term outside the parentheses is clearly positive for every~$m\geq 8$.
    The term in the parentheses has the following approximate values for~$8\leq m\leq 20$:~$2.25409\cdot 10^{10},\ 1.27820\cdot 10^{11},\ 5.65802\cdot 10^{11},\ 2.08630\cdot 10^{12},\ 6.67848\cdot 10^{12},\ 1.90922\cdot 10^{13},\ 4.97477\cdot 10^{13},\ 1.19962\cdot 10^{14},\ 2.70871\cdot 10^{14},\ 5.78002\cdot 10^{14},\ 1.17424\cdot 10^{15},\ 2.28484\cdot 10^{15},\ 4.27947\cdot 10^{15}$.
    For~$m\geq 21$, it is positive for the same reason as the previous cases: each coefficient multiplying an odd power of~$m$ is larger than the coefficient multiplying the immediately smaller odd power divided by~$21$, with strict inequality for all powers of~$m$ smaller than~$9$.
    We conclude that~$r_3$ is decreasing in~$m\geq 8$.

    Finally, for~$r_4$ we obtain
    \[
        r_4(m)-r_4(m+1) = \frac{8m^8-125m^7+742m^6-2294m^5+4087m^4-4119m^3+1817m^2+16m+192}{(m-3)!e^2}. 
    \]
    The denominator is clearly positive for every~$m\geq 8$.
    The numerator has the following approximate values for~$8\leq m\leq 15$:~$6.16282\cdot 10^{6},\ 2.9333\cdot 10^{7},\ 9.95331\cdot 10^{7},\ 2.78598\cdot 10^{8},\ 6.83550\cdot 10^{8},\ 1.52001\cdot 10^{9},\ 3.12883\cdot 10^{9},6.04895\cdot 10^{9}$.
    For~$m\geq 16$, it is positive for the same reason as the previous cases: each coefficient multiplying an even power of~$m$ is strictly larger than the coefficient multiplying the immediately smaller odd power divided by~$16$.
    We conclude that~$r_4$ is decreasing in~$m\geq 8$.
\end{proof}

We now proceed with the proof of \Cref{lem:improvement-prob}.

\begin{proof}[Proof of \Cref{lem:improvement-prob}]
By \Cref{lem:bounds}, for all~$m\in \NN$,
\[
    \hat{d}^0_{m} \leq \frac{1}{e} + \frac{1}{3m!},\quad 
    \frac{1}{e} + \frac{1}{me} - \frac{1}{8m!} \leq \hat{d}^1_{m}\leq \frac{1}{e} + \frac{1}{me} + \frac{1}{3m!},\quad
    \hat{d}^2_{m} \leq \frac{1}{e} + \frac{2}{me} + \frac{1}{m(m-1)e} + \frac{1}{3m!}.
\]
Together with \Cref{lem:cond-prob-formula},
\begingroup
\allowdisplaybreaks
\begin{align*}
    (\hat{d}^1_{n-1})^3  - \PP\big[& \bar{\meet}^{\leq 3}_{\CFK(\theta)} \;\big\vert\; \chi^1_{\CFK(\theta)}\!=\!\chi^2_{\CFK(\theta)}\!=\!\move^3 \big]\\
    \geq & \bigg( \frac{1}{e} + \frac{1}{(n-1)e} - \frac{1}{8(n-1)!} \bigg)^3 - \frac{1}{(n-1)^4} \bigg(\frac{1}{e} + \frac{1}{3(n-2)!}\bigg)^3\\
    & -\frac{6(n-2)}{(n-1)^4} \bigg(\frac{1}{e} + \frac{1}{3(n-2)!}\bigg) \bigg( \frac{1}{e} + \frac{1}{(n-2)e} + \frac{1}{3(n-2)!}\bigg)^2\\
    & - \frac{3(n-2)(n^2-7n+13)}{(n-1)^4(n-3)} \bigg(\frac{1}{e} + \frac{1}{3(n-2)!}\bigg) \\ & \hspace{5em} \bigg( \frac{1}{e} + \frac{2}{(n-2)e} + \frac{1}{(n-2)(n-3)e} + \frac{1}{3(n-2)!}\bigg)^2\\
    & -\frac{2(n-2)(n-3)}{(n-1)^4} \bigg( \frac{1}{e} + \frac{1}{(n-2)e} + \frac{1}{3(n-2)!}\bigg)^3 \\
    & - \frac{6(n-2)(n-3)}{(n-1)^4} \bigg( \frac{1}{e} + \frac{1}{(n-2)e} + \frac{1}{3(n-2)!}\bigg)^2 \\ & \hspace{5em} \bigg( \frac{1}{e} + \frac{2}{(n-2)e} + \frac{1}{(n-2)(n-3)e} + \frac{1}{3(n-2)!}\bigg)\\
    & -\frac{6(n-2)(n-4)(n^2-6n+10)}{(n-1)^4(n-3)} \bigg( \frac{1}{e} + \frac{1}{(n-2)e} + \frac{1}{3(n-2)!}\bigg) \\ & \hspace{5em} \bigg( \frac{1}{e} + \frac{2}{(n-2)e} + \frac{1}{(n-2)(n-3)e} + \frac{1}{3(n-2)!}\bigg)^2\\
    & -\frac{n^5-16n^4+103n^3-335n^2+551n-362}{(n-1)^4(n-3)} \\ & \hspace{5em} \bigg( \frac{1}{e} + \frac{2}{(n-2)e} + \frac{1}{(n-2)(n-3)e} + \frac{1}{3(n-2)!}\bigg)^3\\
    ={} & \frac{1}{8(n-1)^4(n-2)(n-3)^4} \\ & \hspace{5em} \bigg( \frac{8n^7-88n^6+360n^5-664n^4+520n^3-168n^2+184n-152}{(n-2)e^3} \\
    & \hspace{5em} - \frac{512n^5-5120n^4+21504n^3-49125n^2+62868n-36271}{1728(n-2)^2(n-4)!^3} \\
    & \hspace{5em} - \frac{64n^6-704n^5+3127n^4-7233n^3+9081n^2-5103n+128}{24(n-2)(n-4)!^2e}\\
    & \hspace{5em} - \frac{8n^7-93n^6+426n^5-980n^4+1182n^3-631n^2-8n-24}{(n-4)!e^2}\bigg).
\end{align*}
\endgroup

The expression in parentheses is greater than~$8831$ when~$n=8$ and, by \Cref{claim:monotone-terms}, increasing in~$n$ when~$n\geq 8$. 
Thus, for~$n\geq 8$,
\[
(\hat{d}^1_{n-1})^3  - \PP\big[\bar{\meet}^{\leq 3}_{\CFK(\theta)} \;\big\vert\; \chi^1_{\CFK(\theta)}\!=\!\chi^2_{\CFK(\theta)}\!=\!\move^3 \big] \geq \frac{8831}{8(n-1)^4(n-2)(n-3)^4} > \frac{967}{(n-1)^9}.
\]
This completes the proof for~$n\geq 8$.

For~$4\leq n\leq 7$ we can compute both of the terms in the statement exactly. It is easily verified that~$\hat{d}^1_3 = \frac{3}{6} = \frac{1}{2}$,~$\hat{d}^1_4 = \frac{11}{24}$,~$\hat{d}^1_5 = \frac{53}{120}$, and~$\hat{d}^1_6 = \frac{309}{720}$, which gives us the value of the first term. The value of the second term is given in \Cref{lem:cond-prob-formula}. Thus, for~$n=4$, 
\[
(\hat{d}^1_3)^3 - \PP\big[\bar{\meet}^{\leq 3}_{\CFK(\theta)} \;\big\vert\; \chi^1_{\CFK(\theta)}\!=\!\chi^2_{\CFK(\theta)}\!=\!\move^3 \big] = \frac{1}{8}-\frac{1}{8} = 0.
\]
For~$n=5$,
\[
(\hat{d}^1_4)^3 - \PP\big[\bar{\meet}^{\leq 3}_{\CFK(\theta)} \;\big\vert\; \chi^1_{\CFK(\theta)}\!=\!\chi^2_{\CFK(\theta)}\!=\!\move^3 \big] = \frac{1331}{13824}-\frac{5}{54} =\frac{51}{13824} = \frac{17}{4608} > 
\frac{967}{4^9}.
\]
For~$n=6$,
\begin{align*}
(\hat{d}^1_5)^3 - \PP\big[\bar{\meet}^{\leq 3}_{\CFK(\theta)} \;\big\vert\; \chi^1_{\CFK(\theta)}\!=\!\chi^2_{\CFK(\theta)}\!=\!\move^3 \big] & = \frac{148877}{1728000}-\frac{439471}{5184000} = \frac{179}{129600} > 
\frac{967}{5^9}.
\end{align*}
Finally, for~$n=7$,
\begin{align*}
(\hat{d}^1_6)^3 - \PP\big[\bar{\meet}^{\leq 3}_{\CFK(\theta)} \;\big\vert\; \chi^1_{\CFK(\theta)}\!=\!\chi^2_{\CFK(\theta)}\!=\!\move^3 \big] = \frac{1092727}{13824000} - \frac{406417}{5184000} & = \frac{5369}{8294400} 
> \frac{967}{6^9}.
\end{align*}
This completes the proof.
\end{proof}

\section{Proofs Deferred from \texorpdfstring{\Cref{subsec:meeting-time}}{Section 4.2}}

\subsection{Proof of \texorpdfstring{\Cref{lem:exp-first-fixed-point}}{Lemma 7}}\label{app:lem:exp-first-fixed-point}

\lemExpFirstFixedPoint*

\begin{proof}
Fix~$n$ and~$b$ as in the statement.
For~$m\in \NN$ with~$m\geq 2$,~$c\in [m]\cup \{0\}$, and~$\ell\in [m-c]\cup \{0\}$, we define~$p_c(m,\ell) \coloneqq \PP[|\{i\in [m]: \pi(i)=\rho(i)\}| = \ell]$, where~$\pi$ is a permutation taken uniformly at random from the set~$\calP(\{1+c,\ldots,m+c\})$ and~$\rho$ is a permutation taken uniformly at random from the set~$\calP([m])$.
Note that, since $\pi$ is taken uniformly at random from~$\calP(\{1+c,\ldots,m+c\})$, this probability is the same for any fixed~$\rho\in \calP([m])$; in particular,~$p_c(m,\ell) = \PP[|\{i\in [m]: \pi(i)=i\}| = \ell]$.
We will be interested in probabilities of the form $p_b(n,k)$ for $k\in [n-b]\cup \{0\}$, in order to compute the expectation $\first_b(n)$ through the law of total expectation.

For each~$k\in [n-b]\cup\{0\}$, we have that
\begingroup
\allowdisplaybreaks
\begin{align}
    p_b(n,k) & = \frac{1}{n!} \big| \big\{ \pi\in \calP(\{1+b,\ldots,n+b\}): |\{i\in \{1+b,\ldots,n\}: \pi(i)=i\}| = k \big\} \big| \nonumber\\
    & =  \frac{1}{n!} \sum_{S\subseteq \{1+b,\ldots,n\}: |S|=k} \big| \big\{ \pi\in \calP(\{1+b,\ldots,n+b\}): \pi(i)=i\ \forall i\in S,\ \pi(i)\neq i\ \forall i\notin S  \big\} \big|\nonumber\\
    & = \frac{1}{n!} \binom{n-b}{k} \big| \big\{ \pi\in \calP(\{1+b,\ldots,n+b\}): \nonumber\\[-10pt]
    & \phantom{ = \frac{1}{n!} \binom{n-b}{k} \big| \big\{} \pi(i)=i\ \forall i\in \{1+b,\ldots,k+b\},\ \pi(i)\neq i\ \forall i\in \{k+1+b,\ldots,n\} \big\} \big|\nonumber\\
    & = \frac{1}{n!} \binom{n-b}{k} \big| \big\{ \pi\in \calP(\{1+b,\ldots,n-k+b\}): \pi(i)\neq i\ \forall i\in \{1+b,\ldots,n-k\} \big\} \big|\nonumber\\
    & =\frac{1}{n!} \binom{n-b}{k} (n-k)! \cdot p_b(n-k,0) \nonumber\\
    & = \frac{(n-b)!}{n!} \cdot \frac{(n-k)!}{(n-k-b)!} \cdot \frac{1}{k!} \cdot p_b(n-k,0),\label{eq:prob-bnk}
\end{align}
\endgroup
where the second equality follows by applying total probabilities conditional on the~$k$ fixed points of the permutation, and the third one by observing that the terms of the sum are the same for every set~$S$, so we can take an arbitrary one and reduce to counting permutations on~$\{k+1+b,\ldots,n+b\}$ with no fixed points. The subsequent equalities follow by shifting, simplifying, and applying the definition of~$p_b(n-k,0)$.

Since every permutation in~$\calP(\{1+b,\ldots,n+b\})$ has a number of fixed points between~$0$ and~$n-b$, we know that~$\sum_{k=0}^{n-b}p_b(n,k)=1$.
Therefore, \eqref{eq:prob-bnk} yields
\begin{equation}
    p_b(n,0) = 1-\sum_{k=1}^{n-b} \frac{(n-b)!}{n!} \cdot \frac{(n-k)!}{(n-k-b)!} \cdot \frac{1}{k!} \cdot p_b(n-k,0).\label{eq:prob-bn0}
\end{equation}

We now note that, conditional on the fact that there are exactly~$k$ fixed points between permutations~$\pi\in \calP(\{1+c,\ldots,n+c\})$ and~$\rho\in \calP([n])$ taken uniformly at random, the expected index of the first one is simply~$\frac{n+1}{k+1}$, i.e.,\footnote{This fact was proven by \citet{AW90} for the case with~$b=0$ but does not depend on this value; we provide a proof for completeness.}
\begin{equation}
    \EE\big[\!\min \{i\in [n]: \pi(i)=\rho(i)\} \;\big\vert\; |\{i\in [n]: \pi(i)=\rho(i)\}| = k\big] = \frac{n+1}{k+1}.\label{eq:cond-exp-first}
\end{equation}
where the expectation is taken with respect to~$\pi\in \calP(\{1+c,\ldots,n+c\})$ and~$\rho\in \calP([n])$.
To see this, we observe that every subset $S\subseteq [n]$ with $|S|=k$ is equally likely to be the set of fixed points, i.e., the probability
\[
    \PP\big[\{i\in [n]: \pi(i)=\rho(i)\} = S \;\big\vert\; |\{i\in [n]: \pi(i)=\rho(i)\}| = k\big]
\]
is the same for every $S\subseteq [n]$ with $|S|=k$.
Since the number of subsets of~$\{j,\ldots,n\}$ of size~$k$ is~$\binom{n+1-j}{k}$ for $j\in [n-k+1]$, the above expectation is equal to
\[
    \sum_{j=1}^{n-k+1} \frac{\binom{n+1-j}{k}}{\binom{n}{k}} = \frac{1}{\binom{n}{k}} \sum_{j=k}^{n} \binom{j}{k} = \frac{1}{\binom{n}{k}} \binom{n+1}{k+1} = \frac{n+1}{k+1},
\]
where the third equality follows from the hockey-stick identity.

We can now combine the previous equations to obtain
\begingroup
\allowdisplaybreaks
\begin{align*}
    \first_b(n) & = \frac{\sum_{k=1}^{n-b} \EE[\min \{i\in [n]: \pi(i)=\rho(i)\} \mid |\{i\in [n]: \pi(i)=\rho(i)\}| = k] \cdot p_b(n,k)}{\PP[\{i\in [n]: \pi(i)=\rho(i)\}\neq \emptyset]}\\
    & = \frac{1}{1-p_b(n,0)}\sum_{k=1}^{n-b} \frac{n+1}{k+1}\cdot \frac{(n-b)!}{n!} \cdot \frac{(n-k)!}{(n-k-b)!} \cdot \frac{1}{k!} \cdot p_b(n-k,0)\\
    & = \frac{1}{1-p_b(n,0)} \cdot \frac{(n+1)^2}{n+1-b} \sum_{k=1}^{n-b} \frac{(n+1-b)!}{(n+1)!} \cdot \frac{(n+1-(k+1))!}{(n+1-(k+1)-b)!} \cdot \frac{1}{(k+1)!} \\[-10pt]
    & \phantom{= \frac{1}{1-p_b(n,0)} \cdot \frac{(n+1)^2}{n+1-b} \sum_{k=1}^{n-b} {}} \cdot p_b(n+1-(k+1),0)\\[-5pt]
    & = \frac{1}{1-p_b(n,0)} \cdot \frac{(n+1)^2}{n+1-b} \sum_{k=2}^{n+1-b} \frac{(n+1-b)!}{(n+1)!} \cdot \frac{(n+1-k)!}{(n+1-k-b)!} \cdot \frac{1}{k!} \cdot p_b(n+1-k,0)\\
    & = \frac{1}{1-p_b(n,0)} \cdot \frac{(n+1)^2}{n+1-b} \bigg( 1 - p_b(n+1,0) - \frac{(n+1-b)!}{(n+1)!} \cdot \frac{n!}{(n-b)!} \cdot p_b(n,0) \bigg)\\
    & = \frac{1}{1-p_b(n,0)} \bigg( \frac{(n+1)^2}{n+1-b} ( 1 - p_b(n+1,0)) - (n+1) p_b(n,0) \bigg).
\end{align*}
\endgroup
where the second equality follows from \eqref{eq:prob-bnk} and \eqref{eq:cond-exp-first} and the fifth one from \eqref{eq:prob-bn0}.
The result now directly follows from \Cref{lem:shifted-derangements}, since~$p_b(n,0)=\hat{d}^b_n$ and~$p_b(n+1,0)=\hat{d}^b_{n+1}$. 
\end{proof}

\subsection{Proof of \texorpdfstring{\Cref{lem:cond-expectations}}{Lemma 8}}\label{app:lem:cond-expectations}

\lemCondExpectations*

\begin{proof}
    We fix~$n$,~$\theta$, and~$\sigma$ as in the statement.
    The equality for~$B_{\sigma,4}$ is trivial, since the events~$\chi^1_{\sigma}\!=\!\chi^2_{\sigma}\!=\!\move^3$ and~$B_{\sigma,4}$ jointly imply that the players meet in the first step of the third round, \ie $\EE[t_{\sigma} \mid \meet^3_{\sigma},\ \chi^1_{\sigma}\!=\!\chi^2_{\sigma}\!=\!\move^3,\ B_{\sigma,4}] = 2(n-1) + 1$.

    For the other expressions, we apply \Cref{lem:exp-first-fixed-point}.
    Conditional on~$\meet^3_{\sigma}$,~$\chi^1_{\sigma}\!=\!\chi^2_{\sigma}\!=\!\move^3$, and~$B_{\sigma,1}$, both players visit the other's home location at the beginning of the third round, so the expected meeting time, after the first~$2(n-1)+1$ steps, is the expected minimum index for which two permutations taken uniformly at random from~$\calP(\{3,\ldots,n\})$ coincide, conditional on them coinciding.
    This is the same as the expected index of the first fixed point between two permutations~$\pi\in \calP([n-2])$ and~$\rho\in \calP([n-2])$ taken uniformly at random, conditional on them having at least one fixed point, \ie $\first_0(n-2)$.
    Thus, we obtain from \Cref{lem:exp-first-fixed-point} that
    \begin{align*}
        \EE\big[t_{\sigma} \;\big\vert\; \meet^3_{\sigma},\ \chi^1_{\sigma}\!=\!\chi^2_{\sigma}\!=\!\move^3,\ B_{\sigma,1}\big] & = 2(n-1)+1+\first_0(n-2) \\
        & = 2(n-1)+1+\frac{1}{1-\hat{d}^0_{n-2}} \bigg( \frac{(n-1)^2}{n-1} ( 1 - \hat{d}^0_{n-1}) - (n-1) \hat{d}^0_{n-2} \bigg)\\
        & = 2(n-1)+1+\frac{1}{1-\hat{d}^0_{n-2}} (n-1) ( 1 - \hat{d}^0_{n-1} - \hat{d}^0_{n-2} ).
    \end{align*}
    
    Conditional on~$\meet^3_{\sigma}$,~$\chi^1_{\sigma}\!=\!\chi^2_{\sigma}\!=\!\move^3$, and~$B_{\sigma,2}$, exactly one player (say player~$1$ w.l.o.g.) visits the other's home location at the beginning of the third round, so the expected meeting time, after the first~$2(n-1)+1$ steps, is the expected minimum index for which a permutation taken uniformly at random from~$\calP(\{3,\ldots,n\})$ and a permutation taken uniformly at random from~$\calP([n]\setminus \{2,j\})$ for some~$j\in [n]\setminus\{1,2\}$ coincide, conditional on them coinciding.
    This is the same as the expected index of the first fixed point
    between two permutations~$\pi\in \calP(\{2,\ldots,n-1\})$ and~$\rho\in \calP([n-2])$ taken uniformly at random, conditional on them having at least one fixed point
    \ie $\first_1(n-2)$. 
    Thus, we obtain from \Cref{lem:exp-first-fixed-point} that
    \begin{align*}
        \EE\big[t_{\sigma} \;\big\vert\; \meet^3_{\sigma},\ \chi^1_{\sigma}\!=\!\chi^2_{\sigma}\!=\!\move^3,\ B_{\sigma,2}\big] & = 2(n-1)+1+\first_1(n-2) \\
        & = 2(n-1)+1+\frac{1}{1-\hat{d}^1_{n-2}} \bigg( \frac{(n-1)^2}{n-2} ( 1 - \hat{d}^1_{n-1}) - (n-1) \hat{d}^1_{n-2} \bigg).
    \end{align*}

    Conditional on~$\meet^3_{\sigma}$,~$\chi^1_{\sigma}\!=\!\chi^2_{\sigma}\!=\!\move^3$, and~$B_{\sigma,3}$,  no player visits the other's home location at the beginning of the third round, but the visited locations are not the same; note that meeting in this round is only possible if~$n\geq 5$.
    Therefore, the expected meeting time, after the first~$2(n-1)+1$ steps, is the expected minimum index for which a permutation taken uniformly at random from~$\calP([n]\setminus \{1,j\})$ and a permutation taken uniformly at random from~$\calP([n]\setminus \{2,k\})$ for~$j,k\in[n]\setminus \{1,2\}$ with~$j\neq k$ coincide, conditional on them coinciding.
    This is the same as the expected index of the first fixed point between two permutations~$\pi\in \calP(\{3,\ldots,n\})$ and~$\rho\in \calP([n-2])$ taken uniformly at random, conditional on them having at least one fixed point
    \ie $\first_2(n-2)$.
    Thus, we obtain from \Cref{lem:exp-first-fixed-point} that
    \begin{align*}
        \EE\big[t_{\sigma} \;\big\vert\; \meet^3_{\sigma},\ \chi^1_{\sigma}\!=\!\chi^2_{\sigma}\!=\!\move^3,\ B_{\sigma,3}\big] & = 2(n-1)+1+\first_2(n-2) \\
        & = 2(n-1)+1+\frac{1}{1-\hat{d}^2_{n-2}} \bigg( \frac{(n-1)^2}{n-3} ( 1 - \hat{d}^2_{n-1}) - (n-1) \hat{d}^2_{n-2} \bigg).\qedhere
    \end{align*}
\end{proof}

\subsection{Proof of \texorpdfstring{\Cref{lem:cond-probabilities}}{Lemma 9}}\label{app:lem:cond-probabilities}

\lemCondProbabilities*

\begin{proof}
    Let~$n$ and~$\theta$ be as in the statement.
    Since the statement is only about the strategy~$\CFK(\theta)$, we omit the subindex~$\CFK(\theta)$ throughout the proof for compactness.
    
    We first compute~$\PP\big[B_{1},\ \bar{\meet}^{\leq 2} \;\big\vert\; \chi^1\!=\!\chi^2\!=\!\move^3\big]$. 
    To do so, we first apply the law of total probability to obtain
    \begin{align*}
        \PP\big[B_{1},\ \bar{\meet}^{\leq 2} \;\big\vert\; \chi^1\!=\!\chi^2\!=\!\move^3\big] & = 
        \sum_{\mathclap{k,\ell\in [4]}} \PP\big[\bar{\meet}^{\leq 2} \;\big\vert\; \chi^1\!=\!\chi^2\!=\!\move^3,\ B_{1},\ A^1_k,\ A^2_\ell\big] \\[-2ex]
        & \hspace{4em} \cdot\PP\big[ B_{1} \;\big\vert\; \chi^1\!=\!\chi^2\!=\!\move^3,\ A^1_k,\ A^2_\ell\big] \cdot\PP\big[ A^1_k,\ A^2_\ell \;\big\vert\; \chi^1\!=\!\chi^2\!=\!\move^3 \big].
    \end{align*}
    
    We now observe that, for~$i\in \{1,2\}$,~$\pi^i_3(1)=3-i$ occurs with probability~$1$ conditional on~$\chi^i=\move^3$ and~$A^i_1$ (because~$i$ chooses~$3-i$ as the first location in the first three rounds), with probability~$1/3$ conditional on~$\chi^i=\move^3$ and~$A^i_3$ (because~$i$ chooses~$3-i$ as the first location in one of the three rounds), and with probability~$0$ conditional on~$\chi^i=\move^3$ and either~$A^i_2$ or~$A^i_4$.
    Regarding the events~$A^1_k$ and~$A^2_\ell$, we note that when we only condition on~$\chi^1\!=\!\chi^2\!=\!\move^3$ they are independent and, from \Cref{lem:probs-first-locs},~$\PP[ A^i_1 \mid \chi^1\!=\!\chi^2\!=\!\move^3 ] = \frac{1}{(n-1)^2}$ and~$\PP[ A^i_3 \mid \chi^1\!=\!\chi^2\!=\!\move^3 ] = \frac{3(n-2)}{(n-1)^2}$ for~$i\in \{1,2\}$.
    Thus,
    \begin{align*}
        \PP\big[B_{1}&,\ \bar{\meet}^{\leq 2} \;\big\vert\; \chi^1\!=\!\chi^2\!=\!\move^3\big] \\
        = {} & \frac{1}{(n-1)^4} \PP\big[\bar{\meet}^{\leq 2} \;\big\vert\; \chi^1\!=\!\chi^2\!=\!\move^3,\ B_1,\ A^1_1,\ A^2_1 \big] \\
        & + 2\cdot \frac{1}{3}\cdot \frac{3(n-2)}{(n-1)^4} \PP\big[\bar{\meet}^{\leq 2} \;\big\vert\; \chi^1\!=\!\chi^2\!=\!\move^3,\ B_1,\ A^1_1,\ A^2_3 \big] \\
        & + \frac{1}{9} \cdot \frac{9(n-2)^2}{(n-1)^4} \PP\big[\bar{\meet}^{\leq 2} \;\big\vert\; \chi^1\!=\!\chi^2\!=\!\move^3,\ B_1,\ A^1_3,\ A^2_3 \big]\\
        ={} & \frac{1}{(n-1)^4} \PP\big[\bar{\meet}^{\leq 2} \;\big\vert\; \chi^1\!=\!\chi^2\!=\!\move^3,\ B_1,\ A^1_1,\ A^2_1 \big] + \frac{2(n-2)}{(n-1)^4} \PP\big[\bar{\meet}^{\leq 2} \;\big\vert\; \chi^1\!=\!\chi^2\!=\!\move^3,\ B_1,\ A^1_1,\ A^2_3 \big] \\
        & +\frac{(n-2)^2}{(n-1)^4} \PP\big[\bar{\meet}^{\leq 2} \;\big\vert\; \chi^1\!=\!\chi^2\!=\!\move^3,\ B_1,\ A^1_3,\ A^2_3 \big].\\
    \end{align*}
    
    To compute the remaining probabilities, we apply \Cref{lem:shifted-derangements}.
    The expression~$\PP[\bar{\meet}^{\leq 2} \mid \chi^1\!=\!\chi^2\!=\!\move^3,\ B_1,\ A^1_1,\ A^2_1 ]$ is the probability that the players do not meet in rounds~$1$ and~$2$ conditional on moving in these rounds with~$\pi^i_r(1)=3-i$ for each~$i\in \{1,2\}$ and~$r\in \{1,2\}$.
    This probability is~$\hat{d}^0_{n-2}$ for each round, hence~$(\hat{d}^0_{n-2})^2$ for both.
    Similarly,~$\PP[\bar{\meet}^{\leq 2} \mid \chi^1\!=\!\chi^2\!=\!\move^3,\ B_1,\ A^1_1,\ A^2_3 ]$ is the probability that the players do not meet in rounds~$1$ and~$2$ conditional on moving in these rounds with~$\pi^1_r(1)=2$ and~$\pi^2_r(1)\neq 1$ for~$r\in \{1,2\}$.
    This probability is~$\hat{d}^1_{n-2}$ for each round, hence~$(\hat{d}^1_{n-2})^2$ for both.
    Finally,~$\PP[\bar{\meet}^{\leq 2} \mid \chi^1\!=\!\chi^2\!=\!\move^3,\ B_1,\ A^1_3,\ A^2_3 ]$ is the probability that the players do not meet in rounds~$1$ and~$2$ conditional on moving in these rounds with~$\pi^i_r(1)\neq 3-i$ for~$i\in \{1,2\}$ and~$r\in \{1,2\}$, and~$\pi^i_1(1) \neq \pi^i_2(1)$ for~$i\in \{1,2\}$.
    Conditional on these events, the probability of meeting in the first step of round~$1$ is~$\frac{1}{n-2}$, and the probability of not meeting in the first step of round~$1$ and meeting in the first step of round~$2$ is~$\frac{n-4}{n-2}\cdot \frac{1}{n-3}$.
    The non-meeting probability in later steps of these rounds is~$\hat{d}^2_{n-2}$ for each round, hence~$(\hat{d}^2_{n-2})^2$ for both.
    Therefore, the overall non-meeting probability in both rounds is~$\big(1-\frac{1}{n-2}-\frac{n-4}{n-2}\cdot \frac{1}{n-3}\big) (\hat{d}^2_{n-2})^2 = \frac{n^2-7n+13}{(n-2)(n-3)}(\hat{d}^2_{n-2})^2$. 
    We obtain
    \[
        \PP\big[B_1,\ \bar{\meet}^{\leq 2} \;\big\vert\;  \chi^1\!=\!\chi^2\!=\!\move^3\big]
        =\frac{1}{(n-1)^4} \bigg( (\hat{d}^0_{n-2})^2 + 2(n-2)(\hat{d}^1_{n-2})^2 + \frac{(n-2)(n^2-7n+13)}{n-3} (\hat{d}^2_{n-2})^2 \bigg).
    \]
    which concludes the proof for~$\PP\big[B_1,\ \bar{\meet}^{\leq 2} \;\big\vert\;  \chi^1\!=\!\chi^2\!=\!\move^3\big]$.

    We now proceed similarly to compute~$\PP\big[B_2,\ \bar{\meet}^{\leq 2} \;\big\vert\;  \chi^1\!=\!\chi^2\!=\!\move^3\big]$. 
    We first note that, because of symmetry, this probability is two times~$\PP\big[\pi^1_3(1)=2,\ \pi^2_3(1)\neq 1,\ \bar{\meet}^{\leq 2} \;\big\vert\; \chi^1\!=\!\chi^2\!=\!\move^3\big]$, so we can apply total probabilities to obtain
    \begin{align*}
        \PP\big[B_2,\ \bar{\meet}^{\leq 2} \;\big\vert\; &  \chi^1\!=\!\chi^2\!=\!\move^3\big]= \\
        & 2\sum_{k,\ell\in [4]} \PP\big[\bar{\meet}^{\leq 2} \;\big\vert\; \chi^1\!=\!\chi^2\!=\!\move^3,\ \pi^1_3(1)=2,\ \pi^2_3(1)\neq 1,\ A^1_k,\ A^2_\ell \big] \\[-5pt]
        & \phantom{2\sum_{k,\ell\in [4]}} \cdot \PP\big[ \pi^1_3(1)=2,\ \pi^2_3(1)\neq 1 \;\big\vert\; \chi^1\!=\!\chi^2\!=\!\move^3,\ A^1_k,\ A^2_\ell \big] \PP\big[ A^1_k,\ A^2_\ell \;\big\vert\;  \chi^1\!=\!\chi^2\!=\!\move^3 \big].
    \end{align*}
    
    We know from the previous case that~$\pi^1_3(1)=2$ occurs with probability~$1$ conditional on~$\chi^1=\move^3$ and~$A^1_1$, with probability~$1/3$ conditional on~$\chi^1=\move^3$ and~$A^1_3$, and with probability~$0$ conditional on~$\chi^1=\move^3$ and either~$A^1_2$ or~$A^1_4$.
    On the other hand,~$\pi^2_3(1)\neq 1$ occurs with probability~$1$ conditional on~$\chi^1=\move^3$ and~$A^2_2$ or~$A^2_4$, with probability~$2/3$ conditional on~$\chi^1=\move^3$ and~$A^2_3$, and with probability~$0$ conditional on~$\chi^1=\move^3$ and~$A^2_1$.
    Regarding the events~$A^1_k$ and~$A^2_\ell$, we note that when we only condition on~$\chi^1\!=\!\chi^2\!=\!\move^3$ they are independent and, from \Cref{lem:probs-first-locs}, $\PP[ A^i_1 \mid \chi^1\!=\!\chi^2\!=\!\move^3 ] = \frac{1}{(n-1)^2}$, $\PP[ A^i_2 \mid \chi^1\!=\!\chi^2\!=\!\move^3 ] = \frac{n-2}{(n-1)^2}$, $\PP[ A^i_3 \mid \chi^1\!=\!\chi^2\!=\!\move^3 ] = \frac{3(n-2)}{(n-1)^2}$, and $\PP[ A^i_4 \mid \chi^1\!=\!\chi^2\!=\!\move^3 ] = \frac{(n-2)(n-4)}{(n-1)^2}$ for~$i\in \{1,2\}$.
    Thus,
    \begingroup
    \allowdisplaybreaks
    \begin{align*}
        \PP\big[B_2,\ \bar{\meet}^{\leq 2} \;\big\vert\; & \chi^1\!=\!\chi^2\!=\!\move^3\big] \\
        = {} & 2\cdot \frac{n-2}{(n-1)^4} \PP\big[\bar{\meet}^{\leq 2} \;\big\vert\; \chi^1\!=\!\chi^2\!=\!\move^3,\ \pi^1_3(1)=2,\ \pi^2_3(1)\neq 1,\ A^1_1,\ A^2_2 \big] \\
        & + 2\cdot \frac{2}{3} \cdot \frac{3(n-2)}{(n-1)^4} \PP\big[\bar{\meet}^{\leq 2} \;\big\vert\; \chi^1\!=\!\chi^2\!=\!\move^3,\ \pi^1_3(1)=2,\ \pi^2_3(1)\neq 1,\ A^1_1,\ A^2_3 \big] \\
        & + 2\cdot \frac{(n-2)(n-4)}{(n-1)^4} \PP\big[\bar{\meet}^{\leq 2} \;\big\vert\; \chi^1\!=\!\chi^2\!=\!\move^3,\ \pi^1_3(1)=2,\ \pi^2_3(1)\neq 1,\ A^1_1,\ A^2_4 \big] \\
        & + 2\cdot \frac{1}{3} \cdot \frac{3(n-2)^2}{(n-1)^4} \PP\big[\bar{\meet}^{\leq 2} \;\big\vert\; \chi^1\!=\!\chi^2\!=\!\move^3,\ \pi^1_3(1)=2,\ \pi^2_3(1)\neq 1,\ A^1_3,\ A^2_2 \big] \\
        & + 2\cdot \frac{1}{3} \cdot \frac{2}{3} \cdot \frac{9(n-2)^2}{(n-1)^4} \PP\big[\bar{\meet}^{\leq 2} \;\big\vert\; \chi^1\!=\!\chi^2\!=\!\move^3,\ \pi^1_3(1)=2,\ \pi^2_3(1)\neq 1,\ A^1_3,\ A^2_3 \big] \\
        & + 2\cdot \frac{1}{3} \cdot \frac{3(n-2)^2(n-4)}{(n-1)^4} \PP\big[\bar{\meet}^{\leq 2} \;\big\vert\; \chi^1\!=\!\chi^2\!=\!\move^3,\ \pi^1_3(1)=2,\ \pi^2_3(1)\neq 1,\ A^1_3,\ A^2_4 \big]\\
        = {} & \frac{2(n-2)}{(n-1)^4} \PP\big[\bar{\meet}^{\leq 2} \;\big\vert\; \chi^1\!=\!\chi^2\!=\!\move^3,\ \pi^1_3(1)=2,\ \pi^2_3(1)\neq 1,\ A^1_1,\ A^2_2 \big] \\
        & + \frac{4(n-2)}{(n-1)^4} \PP\big[\bar{\meet}^{\leq 2} \;\big\vert\; \chi^1\!=\!\chi^2\!=\!\move^3,\ \pi^1_3(1)=2,\ \pi^2_3(1)\neq 1,\ A^1_1,\ A^2_3 \big] \\
        & + \frac{2(n-2)(n-4)}{(n-1)^4} \PP\big[\bar{\meet}^{\leq 2} \;\big\vert\; \chi^1\!=\!\chi^2\!=\!\move^3,\ \pi^1_3(1)=2,\ \pi^2_3(1)\neq 1,\ A^1_1,\ A^2_4 \big] \\
        & + \frac{2(n-2)^2}{(n-1)^4} \PP\big[\bar{\meet}^{\leq 2} \;\big\vert\; \chi^1\!=\!\chi^2\!=\!\move^3,\ \pi^1_3(1)=2,\ \pi^2_3(1)\neq 1,\ A^1_3,\ A^2_2 \big] \\
        & + \frac{4(n-2)^2}{(n-1)^4} \PP\big[\bar{\meet}^{\leq 2} \;\big\vert\; \chi^1\!=\!\chi^2\!=\!\move^3,\ \pi^1_3(1)=2,\ \pi^2_3(1)\neq 1,\ A^1_3,\ A^2_3 \big] \\
        & +\frac{2(n-2)^2(n-4)}{(n-1)^4} \PP\big[\bar{\meet}^{\leq 2} \;\big\vert\; \chi^1\!=\!\chi^2\!=\!\move^3,\ \pi^1_3(1)=2,\ \pi^2_3(1)\neq 1,\ A^1_3,\ A^2_4 \big].
    \end{align*}
    \endgroup

    For~$k\in\{1,3\}$ and~$\ell\in \{2,3,4\}$, we write
    \[
        p_{k\ell} \coloneqq \PP\big[\bar{\meet}^{\leq 2} \;\big\vert\; \chi^1\!=\!\chi^2\!=\!\move^3,\ \pi^1_3(1)=2,\ \pi^2_3(1)\neq 1,\ A^1_k,\ A^2_\ell \big]
    \]
    in what follows, for the sake of compactness.
    We compute these probabilities by applying \Cref{lem:shifted-derangements}.
    
    Both~$p_{12}$ and~$p_{14}$ are equal to the probability that the players do not meet in rounds~$1$ and~$2$ conditional on moving in these rounds with~$\pi^1_r(1)=2$ and~$\pi^2_r(1)\neq 1$ for each~$r\in \{1,2\}$.
    This probability is~$\hat{d}^1_{n-2}$ for each round, hence~$(\hat{d}^1_{n-2})^2$ for both.
    Similarly,~$p_{13}$ is equal to the probability that the players do not meet in rounds~$1$ and~$2$ conditional on moving in these rounds with~$\pi^1_r(1)=2$ for each~$r\in \{1,2\}$,~$\pi^2_r(1)= 1$ for some fixed~$r\in \{1,2\}$, and~$\pi^2_s(1)\neq 1$ for~$s\in \{1,2\}\setminus \{r\}$.
    This probability is~$\hat{d}^0_{n-2}$ for round~$r$ and~$\hat{d}^1_{n-2}$ for round~$s$, hence~$\hat{d}^0_{n-2}\hat{d}^1_{n-2}$ for both.
    The probability~$p_{32}$ corresponds to the probability that the players do not meet in rounds~$1$ and~$2$ conditional on moving in these rounds with~$\pi^i_r(1)\neq 3-i$ for~$i\in \{1,2\}$ and~$r\in \{1,2\}$,~$\pi^1_1(1) \neq \pi^1_2(1)$, and~$\pi^2_1(1) = \pi^2_2(1)$.
    Conditional on these events, the probability of not meeting in the first step of rounds~$1$ and~$2$ is~$\frac{n-4}{n-2}$.
    Conditional on not meeting in the first step, the non-meeting probability in later steps of these rounds is~$\hat{d}^2_{n-2}$ for each round, hence~$(\hat{d}^2_{n-2})^2$ for both.
    The probability~$p_{33}$ corresponds to the probability that the players do not meet in rounds~$1$ and~$2$ conditional on moving in these rounds with $\pi^i_1(1) \neq \pi^i_2(1)$ for~$i\in \{1,2\}$, $2\notin \{\pi^1_1(1),\pi^1_2(1)\}$, and~$1\in \{\pi^2_1(1),\pi^2_2(1)\}$.
    Conditional on these events, the probability of not meeting in the first step of rounds~$1$ and~$2$ is~$\frac{n-3}{n-2}$.
    Conditional on not meeting in the first step, the non-meeting probability in later steps of these rounds is~$\hat{d}^1_{n-2}$ for the round~$r$ with~$\pi^2_r(1)=1$ and~$\hat{d}^2_{n-2}$ for the other round in~$\{1,2\}\setminus \{r\}$, hence~$\hat{d}^1_{n-2} \hat{d}^2_{n-2}$ for both.
    Finally,~$p_{34}$ is the probability that the players do not meet in rounds~$1$ and~$2$ conditional on moving in these rounds with~$\pi^i_r(1)\neq 3-i$ for~$i\in \{1,2\}$ and~$r\in \{1,2\}$, and~$\pi^i_1(1) \neq \pi^i_2(1)$ for~$i\in \{1,2\}$.
    This probability was already computed in the previous case, where we obtained that the non-meeting probability in both rounds is~$\frac{n^2-7n+13}{(n-2)(n-3)}(\hat{d}^2_{n-2})^2$. 
    We obtain
    \begin{align*}
        \PP\big[B_2&,\ \bar{\meet}^{\leq 2} \;\big\vert\;  \chi^1\!=\!\chi^2\!=\!\move^3\big] \\
        ={} & \frac{2(n-2)}{(n-1)^4} \bigg( (\hat{d}^1_{n-2})^2 + 2\hat{d}^0_{n-2} \hat{d}^1_{n-2} + (n-4)(\hat{d}^1_{n-2})^2 + (n-2) \frac{n-4}{n-2} (\hat{d}^2_{n-2})^2 \\
        & \phantom{\frac{2(n-2)}{(n-1)^4} \bigg(} + 2(n-2)  \frac{n-3}{n-2} \hat{d}^1_{n-2} \hat{d}^2_{n-2} + (n-2)(n-4) \frac{n^2-7n+13}{(n-2)(n-3)}(\hat{d}^2_{n-2})^2 \bigg)\\
        ={} & \frac{2(n-2)}{(n-1)^4} \bigg( 2\hat{d}^0_{n-2}\hat{d}^1_{n-2} + (n-3)(\hat{d}^1_{n-2})^2 + 2(n-3) \hat{d}^1_{n-2} \hat{d}^2_{n-2} + \frac{(n-4)(n^2-6n+10)}{n-3}(\hat{d}^2_{n-2})^2 \bigg),
    \end{align*}
    which concludes the proof for~$\PP[B_2,\ \bar{\meet}^{\leq 2} \mid \chi^1\!=\!\chi^2\!=\!\move^3]$.

    We finally compute~$\PP[B_3,\ \bar{\meet}^{\leq 2} \mid \chi^1\!=\!\chi^2\!=\!\move^3]$. As before, we first apply total probabilities to obtain
    \begin{align*}
        \PP\big[B_3,\ \bar{\meet}^{\leq 2} \;\big\vert\; \chi^1\!=\!\chi^2\!=\!\move^3\big] = &  \sum_{k,\ell\in [4]} \PP\big[\bar{\meet}^{\leq 2} \;\big\vert\; \chi^1\!=\!\chi^2\!=\!\move^3,\ B_3,\ A^1_k,\ A^2_\ell \big]\\[-5pt]
        & \phantom{\sum_{k,\ell\in [4]}} \cdot \PP\big[ B_3 \;\big\vert\; \chi^1\!=\!\chi^2\!=\!\move^3,\ A^1_k,\ A^2_\ell \big] \PP\big[ A^1_k,\ A^2_\ell \;\big\vert\; \chi^1\!=\!\chi^2\!=\!\move^3 \big].
    \end{align*}
    
    We next compute the probability of~$B_3$ conditional on~$\chi^1\!=\!\chi^2\!=\!\move^3$,~$A^1_k$, and~$A^2_\ell$ for different values of~$k,\ell\in [4]$; we restrict to cases with~$k\leq \ell$ because of symmetry. 
    If~$k=1$ this probability is~$0$, because player~$1$ chooses~$2$ as the first location in the first three rounds and~$B_3$ cannot hold. 
    If~$k,\ell\in \{2,4\}$, it is~$\frac{n-3}{n-2}$, because~$B_3$ holds whenever the starting locations chosen by each player in the third round differ.
    If~$k=\ell=3$, it is~$\big(\frac{2}{3}\big)^2 \frac{n-3}{n-2}$, because for~$B_3$ to hold we need that~$\pi^i_r(1)=3-i$ for a round~$r\in \{1,2\}$ for~$i\in \{1,2\}$, which occurs with probability~$2/3$ for each player, and that~$\pi^1_3(1)\neq \pi^2_3(1)$, which occurs with probability~$\frac{n-3}{n-2}$ conditional on the previous event.
    If~$k=2$ and~$\ell=3$, it is~$\frac{2}{3}\cdot \frac{n-3}{n-2}$, because for~$B_3$ to hold we need that~$\pi^2_r(1)=1$ for a round~$r\in\{1,2\}$, which occurs with probability~$2/3$, and that~$\pi^1_3(1)\neq \pi^2_3(1)$, which occurs with probability~$\frac{n-3}{n-2}$ conditional on the previous event.
    Finally, if~$k=3$ and~$\ell=4$, it is~$\frac{2}{3}\cdot \frac{n-3}{n-2}$, because for~$B_3$ to hold we need that~$\pi^1_r(1)=2$ for a round~$r\in\{1,2\}$, which occurs with probability~$2/3$, and that~$\pi^1_3(1)\neq \pi^2_3(1)$, which occurs with probability~$\frac{n-3}{n-2}$ conditional on the previous event.
    Regarding the events~$A^1_k$ and~$A^2_\ell$, we note that when we only condition on~$\chi^1\!=\!\chi^2\!=\!\move^3$ they are independent and, from \Cref{lem:probs-first-locs},~$\PP[ A^i_2 \mid \chi^1\!=\!\chi^2\!=\!\move^3 ] = \frac{n-2}{(n-1)^2}$, $\PP[ A^i_3 \mid \chi^1\!=\!\chi^2\!=\!\move^3 ] = \frac{3(n-2)}{(n-1)^2}$, and~$\PP[ A^i_4 \mid \chi^1\!=\!\chi^2\!=\!\move^3 ] = \frac{(n-2)(n-4)}{(n-1)^2}$ for~$i\in \{1,2\}$.
    Thus,
    \begingroup
    \allowdisplaybreaks
    \begin{align*}
        \PP\big[B_3,\ \bar{\meet}^{\leq 2} \;\big\vert\; \chi^1\!=\!\chi^2\!=\!\move^3\big] 
        = {} & \frac{n-3}{n-2}\cdot \frac{(n-2)^2}{(n-1)^4} \PP\big[\bar{\meet}^{\leq 2} \;\big\vert\; \chi^1\!=\!\chi^2\!=\!\move^3,\ B_3,\ A^1_2,\ A^2_2 \big] \\
        & +2\cdot \frac{2}{3} \cdot \frac{n-3}{n-2}\cdot \frac{3(n-2)^2}{(n-1)^4} \PP\big[\bar{\meet}^{\leq 2} \;\big\vert\; \chi^1\!=\!\chi^2\!=\!\move^3,\ B_3,\ A^1_2,\ A^2_3 \big] \\
        & +2\cdot \frac{n-3}{n-2}\cdot \frac{(n-2)^2(n-4)}{(n-1)^4} \PP\big[\bar{\meet}^{\leq 2} \;\big\vert\; \chi^1\!=\!\chi^2\!=\!\move^3,\ B_3,\ A^1_2,\ A^2_4 \big] \\
        & +\frac{4}{9}\cdot \frac{n-3}{n-2}\cdot \frac{9(n-2)^2}{(n-1)^4} \PP\big[\bar{\meet}^{\leq 2} \;\big\vert\; \chi^1\!=\!\chi^2\!=\!\move^3,\ B_3,\ A^1_3,\ A^2_3 \big] \\
        & +2\cdot \frac{2}{3}\cdot \frac{n-3}{n-2}\cdot \frac{3(n-2)^2(n-4)}{(n-1)^4} \PP\big[\bar{\meet}^{\leq 2} \;\big\vert\; \chi^1\!=\!\chi^2\!=\!\move^3,\ B_3,\ A^1_3,\ A^2_4 \big] \\
        & +\frac{n-3}{n-2}\cdot \frac{(n-2)^2(n-4)^2}{(n-1)^4} \PP\big[\bar{\meet}^{\leq 2} \;\big\vert\; \chi^1\!=\!\chi^2\!=\!\move^3,\ B_3,\ A^1_4,\ A^2_4 \big] \\
         ={} & \frac{(n-2)(n-3)}{(n-1)^4} \PP\big[\bar{\meet}^{\leq 2} \;\big\vert\; \chi^1\!=\!\chi^2\!=\!\move^3,\ B_3,\ A^1_2,\ A^2_2 \big] \\
        & + \frac{4(n-2)(n-3)}{(n-1)^4} \PP\big[\bar{\meet}^{\leq 2} \;\big\vert\; \chi^1\!=\!\chi^2\!=\!\move^3,\ B_3,\ A^1_2,\ A^2_3 \big] \\
        & + \frac{2(n-2)(n-3)(n-4)}{(n-1)^4} \PP\big[\bar{\meet}^{\leq 2} \;\big\vert\; \chi^1\!=\!\chi^2\!=\!\move^3,\ B_3,\ A^1_2,\ A^2_4 \big] \\
        & +\frac{4(n-2)(n-3)}{(n-1)^4} \PP\big[\bar{\meet}^{\leq 2} \;\big\vert\; \chi^1\!=\!\chi^2\!=\!\move^3,\ B_3,\ A^1_3,\ A^2_3 \big] \\
        & +\frac{4(n-2)(n-3)(n-4)}{(n-1)^4} \PP\big[\bar{\meet}^{\leq 2} \;\big\vert\; \chi^1\!=\!\chi^2\!=\!\move^3,\ B_3,\ A^1_3,\ A^2_4 \big] \\
        & +\frac{(n-2)(n-3)(n-4)^2}{(n-1)^4} \PP\big[\bar{\meet}^{\leq 2} \;\big\vert\; \chi^1\!=\!\chi^2\!=\!\move^3,\ B_3,\ A^1_4,\ A^2_4 \big].
    \end{align*}
    \endgroup
    
    For~$k,\ell\in \{2,3,4\}$ with~$k\leq \ell$, we write \[ p_{k\ell} \coloneqq \PP\big[\bar{\meet}^{\leq 2} \;\big\vert\; \chi^1\!=\!\chi^2\!=\!\move^3,\ B_3,\ A^1_k,\ A^2_\ell \big]\] in what follows, for the sake of compactness.
    We compute these probabilities by applying \Cref{lem:shifted-derangements}.
   
    The probability~$p_{22}$ is the probability that the players do not meet in rounds~$1$ and~$2$ conditional on moving in these rounds with~$2\neq \pi^1_r(1)\neq \pi^2_r(1)\neq 1$ for each~$r\in \{1,2\}$, because we know that~$\pi^1_1(1)=\pi^1_2(1)=\pi^1_3(1)$ and~$\pi^2_1(1)=\pi^2_2(1)=\pi^2_3(1)$ and that~$2\neq \pi^1_3(1)\neq \pi^2_3(1)\neq 1$.
    This probability is~$\hat{d}^2_{n-2}$ for each round, hence~$(\hat{d}^2_{n-2})^2$ for both.
    
    The probability~$p_{23}$ is the probability that the players do not meet in rounds~$1$ and~$2$ conditional on moving in these rounds with~$\pi^1_1(1)=\pi^1_2(1)=\pi^1_3(1)\neq 2$ and~$\pi^2_r(1)$ being different for all~$r\in [3]$ with~$\pi^1_3(1)\neq \pi^2_3(1)$ and~$\pi^2_r(1)=1$ for some~$r\in \{1,2\}$.
    The probability that the players do not meet in the first step of rounds~$1$ and~$2$ is then~$\frac{n-4}{n-3}$.
    Conditional on this event, the non-meeting probability in later steps of these rounds is~$\hat{d}^1_{n-2}$ for the round~$r\in \{1,2\}$ with~$\pi^2_r(1)=1$ and~$\hat{d}^2_{n-2}$ for the other round in~$\{1,2\}\setminus \{r\}$, hence~$\hat{d}^1_{n-2}\hat{d}^2_{n-2}$ for both.

    The probability~$p_{24}$ is the probability that the players do not meet in rounds~$1$ and~$2$ conditional on moving in these rounds with~$\pi^1_1(1)=\pi^1_2(1)=\pi^1_3(1)\neq 2$ and~$\pi^2_r(1)$ being different and not equal to~$1$ for all~$r\in [3]$, with~$\pi^1_3(1)\neq \pi^2_3(1)$.
    The probability that the players do not meet in the first step of rounds~$1$ and~$2$ is then~$\frac{n-5}{n-3}$.
    Conditional on this event, the non-meeting probability in later steps of these rounds is~$\hat{d}^2_{n-2}$ for each round~$r\in \{1,2\}$, hence~$(\hat{d}^2_{n-2})^2$ for both.

    The probability~$p_{33}$ is the probability that the players do not meet in rounds~$1$ and~$2$ conditional on moving in these rounds with~$\pi^1_r(1)$ being different for all~$r\in [3]$ with~$\pi^1_r(1)=2$ for a fixed~$r\in [2]$,~$\pi^2_{r'}$ being different for all~$r'\in [3]$ with~$\pi^2_{r'}(1)=1$ for a fixed~$r'\in [2]$, and~$\pi^1_3(1)\neq \pi^2_3(1)$.
    We distinguish two cases, depending on whether~$r=r'$ or not.
    If~$r\neq r'$, which occurs with probability~$1/2$ conditional on the previous events, the players do not meet in the first step of rounds~$1$ and~$2$, and the non-meeting probability in later steps of these rounds is~$\hat{d}^1_{n-2}$ for each round, hence~$(\hat{d}^1_{n-2})^2$ for both.
    If~$r=r'$, which occurs with probability~$1/2$ conditional on the previous events as well, then denoting the other round by~$s\in [2]\setminus \{r\}$, the players do not meet in the first step of rounds 1 or 2 if either~$\pi^2_3(1)=\pi^1_s(1)$, which occurs with a conditional probability of~$1/(n-3)$, or~$\pi^2_3(1)\neq \pi^1_s(1)$ and~$\pi^2_s(1)\neq \pi^1_s(1)$, which occurs with a conditional probability of~$\big(\frac{n-4}{n-3}\big)^2$.
    The probability that the players do not meet in the first step of rounds~$1$ and~$2$ is then~$\frac{1}{n-3}+\big(\frac{n-4}{n-3}\big)^2 = \frac{n^2-7n+13}{(n-3)^2}$.
    Conditional on this event, the non-meeting probability in later steps is~$\hat{d}^0_{n-2}$ for round~$r$ and~$\hat{d}^2_{n-2}$ for round~$s$, hence~$\hat{d}^0_{n-2}\hat{d}^2_{n-2}$ for both.
    
    The probability~$p_{34}$ is the probability that the players do not meet in rounds~$1$ and~$2$ conditional on moving in these rounds with~$\pi^1_r(1)$ being different for all~$r\in [3]$ with~$\pi^1_r(1)=2$ for some~$r\in [2]$,~$\pi^2_r(1)$ being different and not equal to~$1$ for all~$r\in [3]$, and~$\pi^1_3(1)\neq \pi^2_3(1)$.
    Letting~$r\in [2]$ denote the round such that~$\pi^1_r(1)=2$ and~$s\in [2]\setminus \{r\}$ the other round, the players do not meet in the first step of rounds~$1$ and~$2$ if either~$\pi^2_3(1)=\pi^1_s(1)$, which occurs with a conditional probability of~$1/(n-3)$, or~$\pi^2_3(1)\neq \pi^1_s(1)$ and~$\pi^2_s(1)\neq \pi^1_s(1)$, which occurs with a conditional probability of~$\big(\frac{n-4}{n-3}\big)^2$.
    The probability that the players do not meet in the first step of rounds~$1$ and~$2$ is then~$\frac{1}{n-3}+\big(\frac{n-4}{n-3}\big)^2 = \frac{n^2-7n+13}{(n-3)^2}$.
    Conditional on this event, the non-meeting probability in later steps is~$\hat{d}^1_{n-2}$ for round~$r$ and~$\hat{d}^2_{n-2}$ for round~$s$, hence~$\hat{d}^1_{n-2}\hat{d}^2_{n-2}$ for both.

    Finally, the probability~$p_{44}$ is the probability that the players do not meet in rounds~$1$ and~$2$ conditional on moving in these rounds with~$\pi^i_r(1)$ being different and not equal to~$3-i$ for each~$i\in \{1,2\}$ and~$r\in [3]$, and~$\pi^1_3(1)\neq \pi^2_3(1)$.
    We distinguish two cases, depending on whether~$\pi^2_3(1)\in \{\pi^1_1(1),\pi^1_2(1)\}$ or not.
    If~$\pi^2_3(1)=\pi^1_r(1)$ for some~$r\in [2]$, which occurs with probability~$\frac{2}{n-3}$ conditional on the previous events, denoting the other round by~$s\in [2]\setminus \{r\}$, the players do not meet in the first step of rounds~$1$ and~$2$ if~$\pi^2_s(1)\neq \pi^1_s(1)$, which occurs with a conditional probability of~$\frac{n-4}{n-3}$.
    If~$\pi^2_3(1)\notin \{\pi^1_1(1),\pi^1_2(1)\}$, which occurs with probability~$\frac{n-5}{n-3}$ conditional on the previous events, the players do not meet in the first step of rounds~$1$ and~$2$ if either~$\pi^2_1(1)=\pi^1_2(1)$, which occurs with probability~$\frac{1}{n-3}$, or if~$\pi^2_1(1)\notin \{\pi^1_1(1),\pi^1_2(1)\}$ and~$\pi^2_2(1)\neq \pi^1_2(1)$, which occurs with probability~$\frac{n-5}{n-3}\cdot \frac{n-5}{n-4}$.
    Therefore, the non-meeting probability in the first step of these rounds is~$\frac{1}{n-3}+\frac{(n-5)^2}{(n-3)(n-4)}=\frac{n^2-9n+21}{(n-3)(n-4)}$.
    In either case, conditional on not meeting in the first steps, the non-meeting probability in later steps of these rounds is~$\hat{d}^2_{n-2}$ for each round~$r\in \{1,2\}$, hence~$(\hat{d}^2_{n-2})^2$ for both.
    
    We obtain
    \begin{align*}
        \PP\big[ & B_3,\ \bar{\meet}^{\leq 2} \;\big\vert\; \chi^1\!=\!\chi^2\!=\!\move^3\big] \\
        ={} & \frac{(n-2)(n-3)}{(n-1)^4} \bigg( (\hat{d}^2_{n-2})^2 + 4\cdot \frac{n-4}{n-3}\hat{d}^1_{n-2}\hat{d}^2_{n-2} +2(n-4)\frac{n-5}{n-3}(\hat{d}^2_{n-2})^2 \\
        & \phantom{\frac{(n-2)(n-3)}{(n-1)^4} \bigg(} + 4\bigg(\frac{1}{2}(\hat{d}^1_{n-2})^2+\frac{1}{2}\cdot\frac{n^2-7n+13}{(n-3)^2}\hat{d}^0_{n-2}\hat{d}^2_{n-2}\bigg) + 4(n-4) \frac{n^2-7n+13}{(n-3)^2} \hat{d}^1_{n-2}\hat{d}^2_{n-2}\\
        & \phantom{\frac{(n-2)(n-3)}{(n-1)^4} \bigg(} +(n-4)^2\bigg(\frac{2}{n-3}\cdot \frac{n-4}{n-3} + \frac{n-5}{n-3} \cdot \frac{n^2-9n+21}{(n-3)(n-4)}\bigg)(\hat{d}^2_{n-2})^2\bigg)\\
        ={} & \frac{(n-2)}{(n-1)^4} \bigg( \frac{2(n^2-7n+13)}{n-3}\hat{d}^0_{n-2}\hat{d}^2_{n-2} + 2(n-3)(\hat{d}^1_{n-2})^2 + \frac{4(n-4)(n^2-6n+10)}{n-3} \hat{d}^1_{n-2}\hat{d}^2_{n-2}\\
        & \phantom{\frac{(n-2)}{(n-1)^4} \bigg(} + \frac{n^4-14n^3+75n^2-185n+181}{n-3}(\hat{d}^2_{n-2})^2 \bigg),
    \end{align*}
    which concludes the proof for~$\PP[B_3,\ \bar{\meet}^{\leq 2} \mid \chi^1\!=\!\chi^2\!=\!\move^3]$.
\end{proof}

\subsection{Proof of\texorpdfstring{ \Cref{claim:diff-probabilities}}{Lemma 10}}\label{app:claim:diff-probabilities}

\claimDiffProbabilities*

\begin{proof}
    We first express~$d^1_{n-1}$ and~$d^k_{n-2}$ for~$k\in \{0,1,2\}$ in terms of~$d^0_{n-3}$ and~$d^0_{n-4}$, which will be useful to only work with these two terms in what follows.
    By repeatedly applying the formula~$d^{k+1}_m=d^k_m+d^k_{m-1}$ for~$m\in \NN, k\in [m-1]$ from the definition of the difference table, and the well-known recursion~$d^0_m=(m-1)(d^0_{m-1}+d^0_{m-2})$ for~$m\in \NN\setminus \{1\}$, we obtain
    \begin{equation}\label{eqs:fgh}
    \begin{aligned}
        d^1_{n-1} & = d^0_{n-1}+d^0_{n-2} = (n-2)(d^0_{n-2}+d^0_{n-3}) + (n-3)(d^0_{n-3}+d^0_{n-4}) \\
        & = (n-2)(n-3)(d^0_{n-3}+d^0_{n-4}) + (n-2)d^0_{n-3} + (n-3)(d^0_{n-3}+d^0_{n-4}) \\
        & = (n^2-3n+1)d^0_{n-3}+(n-1)(n-3)d^0_{n-4},\\
        d^0_{n-2} & = (n-3)(d^0_{n-3}+d^0_{n-4}),\\
        d^1_{n-2} & = d^0_{n-2}+d^0_{n-3} = (n-3)(d^0_{n-3}+d^0_{n-4}) + d^0_{n-3} = (n-2)d^0_{n-3} + (n-3)d^0_{n-4}, \\
        d^2_{n-2} & = d^1_{n-2}+d^1_{n-3} = (n-2)d^0_{n-3} + (n-3)d^0_{n-4} + d^0_{n-3} + d^0_{n-4} \\
        & = (n-1)d^0_{n-3}+(n-2)d^0_{n-4}.
    \end{aligned}
    \end{equation}
    
    We now show that~$\Delta_2=-2\Delta_1>0$, with~$\Delta_2=8/81$ if~$n=4$.
    We first analyze~$\Delta_1$ by observing that
    \begin{align*}
        & \PP\big[B_{\AW(\theta),1} \;\big\vert\; \bar{\meet}^{\leq 2}_{\AW(\theta)},\ \chi^1_{\AW(\theta)}\!=\!\chi^2_{\AW(\theta)}\!=\!\move^3\big] = \PP\big[B_{\AW(\theta),1} \;\big\vert\; \chi^1_{\AW(\theta)}\!=\!\chi^2_{\AW(\theta)}\!=\!\move^3\big] = \frac{1}{(n-1)^2},
    \end{align*}
    because the event~$B_{\AW(\theta),1}$ is independent from~$\bar{\meet}^{\leq 2}_{\AW(\theta)}$ and each player starts the third permutation at the home location of the other with probability~$1/(n-1)$, and
    \begingroup
    \allowdisplaybreaks
    \begin{align*}
        \PP\big[B_{\CFK(\theta),1} \;\big\vert\; & \bar{\meet}^{\leq 2}_\CFK(\theta),\ \chi^1_{\CFK(\theta)}\!=\!\chi^2_{\CFK(\theta)}\!=\!\move^3\big] \\
        ={} & \frac{\PP\big[B_{\CFK(\theta),1},\ \bar{\meet}^{\leq 2}_\CFK(\theta) \;\big\vert\; \chi^1_{\CFK(\theta)}\!=\!\chi^2_{\CFK(\theta)}\!=\!\move^3\big]}{\PP\big[\bar{\meet}^{\leq 2}_\CFK(\theta) \;\big\vert\; \chi^1_{\CFK(\theta)}\!=\!\chi^2_{\CFK(\theta)}\!=\!\move^3\big]} \\
        ={} & \frac{(n-1)!^2\PP\big[B_{\CFK(\theta),1},\ \bar{\meet}^{\leq 2}_\CFK(\theta) \;\big\vert\; \chi^1_{\CFK(\theta)}\!=\!\chi^2_{\CFK(\theta)}\!=\!\move^3\big]}{(d^1_{n-1})^2},
    \end{align*}
    \endgroup
    because~$\CFK(\theta)$ is equivalent to~$\AW(\theta)$ in the first two rounds and thus the non-meeting probability in these rounds, when both players move, is simply~$(\hat{d}^1_{n-1})^2=(d^1_{n-1})^2/(n-1)!^2$.
    We obtain
    \begin{align}
        \Delta_1 ={} & \PP\big[B_{\AW(\theta),1} \;\big\vert\; \bar{\meet}^{\leq 2}_{\AW(\theta)},\ \chi^1_{\AW(\theta)}\!=\!\chi^2_{\AW(\theta)}\!=\!\move^3\big] \nonumber\\
        & - \PP\big[B_{\CFK(\theta),1} \;\big\vert\; \bar{\meet}^{\leq 2}_\CFK(\theta),\ \chi^1_{\CFK(\theta)}\!=\!\chi^2_{\CFK(\theta)}\!=\!\move^3\big]\nonumber\\
        ={} & \frac{(d^1_{n-1})^2-(n-1)^2(n-1)!^2\PP\big[B_{\CFK(\theta),1},\ \bar{\meet}^{\leq 2}_\CFK(\theta) \;\big\vert\; \chi^1_{\CFK(\theta)}\!=\!\chi^2_{\CFK(\theta)}\!=\!\move^3\big]}{(n-1)^2(d^1_{n-1})^2}.\label{eq:Delta1}
    \end{align}
    We now compute (the additive inverse of) the numerator:
    \begingroup
    \allowdisplaybreaks
    \begin{align*}
        (n-1&)^2(n-1)!^2\PP\big[B_{\CFK(\theta),1},\ \bar{\meet}^{\leq 2}_\CFK(\theta) \;\big\vert\; \chi^1_{\CFK(\theta)}\!=\!\chi^2_{\CFK(\theta)}\!=\!\move^3\big] - (d^1_{n-1})^2\\
        ={} & (d^0_{n-2})^2 + 2(n-2)(d^1_{n-2})^2 + \frac{(n-2)(n^2-7n+13)}{n-3} (d^2_{n-2})^2 - (d^1_{n-1})^2\\
        ={} & (n-3)^2\big((d^0_{n-3})^2+2d^0_{n-3}d^0_{n-4}+(d^0_{n-4})^2\big) \\
        & + 2(n-2) \big( (n-2)^2(d^0_{n-3})^2 + 2(n-2)(n-3)d^0_{n-3}d^0_{n-4} + (n-3)^2(d^0_{n-4})^2 \big)\\
        & + \frac{(n-2)(n^2-7n+13)}{n-3} \big( (n-1)^2(d^0_{n-3})^2 + 2(n-1)(n-2)d^0_{n-3}d^0_{n-4} + (n-2)^2(d^0_{n-4})^2 \big) \\ 
        & - (n^2-3n+1)^2(d^0_{n-3})^2 - 2(n-1)(n-3)(n^2-3n+1)d^0_{n-3}d^0_{n-4} - (n-1)^2(n-3)^2(d^0_{n-4})^2\\
        = {} & \frac{1}{n-3} \Big[ \big( (n-3)^3+2(n-2)^3(n-3)+(n-1)^2(n-2)(n^2-7n+13) \\[-5pt]
        & \phantom{\frac{1}{n-3}\Big[ \big( } - (n-3)(n^2-3n+1)^2 \big)(d^0_{n-3})^2 \\[-5pt]
        & \phantom{\frac{1}{n-3}\Big[} + 2 \big( (n-3)^3+2(n-2)^2(n-3)^2 + (n-1)(n-2)^2(n^2-7n+13)\\[-5pt]
        & \phantom{\frac{1}{n-3}\Big[ + 2 \big(} - (n-1)(n-3)^2(n^2-3n+1) \big) d^0_{n-3}d^0_{n-4} \\[-5pt]
        & \phantom{\frac{1}{n-3}\Big[} + \big( (n-3)^3 + 2(n-2)(n-3)^3 + (n-2)^3(n^2-7n+13) - (n-1)^2(n-3)^3 \big) (d^0_{n-4})^2 \Big]\\
        ={} & \frac{n-2}{n-3} \big( (n+1)(d^0_{n-3})^2 +2(n-1)d^0_{n-3}d^0_{n-4} + (n-2)(d^0_{n-4})^2 \big).
    \end{align*}
    \endgroup
    Indeed, the first equality comes from \Cref{lem:cond-probabilities}, the second one from \eqref{eqs:fgh}, and the subsequent ones from calculations.
    Replacing in \eqref{eq:Delta1}, we obtain
    \begin{equation}
        \Delta_1 = - \frac{n-2}{(n-1)^2(n-3)(d^1_{n-1})^2} \big( (n+1)(d^0_{n-3})^2 +2(n-1)d^0_{n-3}d^0_{n-4} + (n-2)(d^0_{n-4})^2 \big),\label{eq:Delta1-final}
    \end{equation}
    which is strictly negative because~$n\geq 4$, and evaluates to
    \[
        - \frac{2}{9\cdot 1\cdot 9} ( 5\cdot 0 + 2\cdot 3\cdot 0\cdot 1 + 2\cdot 1) = -\frac{4}{81}
    \]
    for~$n=4$.

    We next analyze~$\Delta_2$ in a similar way.
    We first note that
    \begin{align*}
        & \PP\big[B_{\AW(\theta),2} \;\big\vert\; \bar{\meet}^{\leq 2}_{\AW(\theta)},\ \chi^1_{\AW(\theta)}\!=\!\chi^2_{\AW(\theta)}\!=\!\move^3\big] = \PP\big[B_{\AW(\theta),2} \;\big\vert\; \chi^1_{\AW(\theta)}\!=\!\chi^2_{\AW(\theta)}\!=\!\move^3\big] = \frac{2(n-2)}{(n-1)^2},
    \end{align*}
    and
    \begin{align*}
        \PP\big[B_{\CFK(\theta),2} \;\big\vert\; & \bar{\meet}^{\leq 2}_\CFK(\theta),\ \chi^1_{\CFK(\theta)}\!=\!\chi^2_{\CFK(\theta)}\!=\!\move^3\big] \\
        ={} & \frac{\PP\big[B_{\CFK(\theta),2},\ \bar{\meet}^{\leq 2}_\CFK(\theta) \;\big\vert\; \chi^1_{\CFK(\theta)}\!=\!\chi^2_{\CFK(\theta)}\!=\!\move^3\big]}{\PP\big[\bar{\meet}^{\leq 2}_\CFK(\theta) \;\big\vert\; \chi^1_{\CFK(\theta)}\!=\!\chi^2_{\CFK(\theta)}\!=\!\move^3\big]} \\
        ={} & \frac{(n-1)!^2\PP\big[B_{\CFK(\theta),2},\ \bar{\meet}^{\leq 2}_\CFK(\theta) \;\big\vert\; \chi^1_{\CFK(\theta)}\!=\!\chi^2_{\CFK(\theta)}\!=\!\move^3\big]}{(d^1_{n-1})^2},
    \end{align*}
    where most equalities follow analogously to the case of~$\Delta_1$, but now the probability of~$B_{\AW(\theta),2}$ conditional on moving in the first three rounds is~$2(n-2)/(n-1)^2$, because this event occurs whenever exactly one player starts the third permutation at the home location of the other.
    We obtain
    \begin{align}
        \Delta_2 ={} & \PP\big[B_{\AW(\theta),2} \;\big\vert\; \bar{\meet}^{\leq 2}_{\AW(\theta)},\ \chi^1_{\AW(\theta)}\!=\!\chi^2_{\AW(\theta)}\!=\!\move^3\big]\nonumber\\
        & - \PP\big[B_{\CFK(\theta),2} \;\big\vert\; \bar{\meet}^{\leq 2}_\CFK(\theta),\ \chi^1_{\CFK(\theta)}\!=\!\chi^2_{\CFK(\theta)}\!=\!\move^3\big]\nonumber\\
        ={} & \frac{2(n-2)(d^1_{n-1})^2-(n-1)^2(n-1)!^2\PP\big[B_{\CFK(\theta),2},\ \bar{\meet}^{\leq 2}_\CFK(\theta) \;\big\vert\; \chi^1_{\CFK(\theta)}\!=\!\chi^2_{\CFK(\theta)}\!=\!\move^3\big]}{(n-1)^2(d^1_{n-1})^2}.\label{eq:Delta2}
    \end{align}
    We now compute the numerator, divided by~$n-2$ for convenience:
    \begingroup
    \allowdisplaybreaks
    \begin{align*}
        2(&d^1_{n-1})^2-\frac{(n-1)^2(n-1)!^2}{n-2}\PP\big[B_{\CFK(\theta),2},\ \bar{\meet}^{\leq 2}_\CFK(\theta) \;\big\vert\; \chi^1_{\CFK(\theta)}\!=\!\chi^2_{\CFK(\theta)}\!=\!\move^3\big]\\
        = {} & 2\bigg((d^1_{n-1})^2 - 2d^0_{n-2}d^1_{n-2} - (n-3)(d^1_{n-2})^2 - 2(n-3)d^1_{n-2}d^2_{n-2} -\frac{(n-4)(n^2-6n+10)}{n-3}(d^2_{n-2})^2 \bigg) \\
        = {} & 2\bigg( (n^2-3n+1)^2(d^0_{n-3})^2 + 2(n-1)(n-3)(n^2-3n+1)d^0_{n-3}d^0_{n-4} + (n-1)^2(n-3)^2(d^0_{n-4})^2\\[-5pt]
        & \phantom{2\bigg(} -2(n-3) \big( (n-2)(d^0_{n-3})^2 + (2n-5)d^0_{n-3}d^0_{n-4} + (n-3)(d^0_{n-4})^2\big) \\[-5pt]
        & \phantom{2\bigg(} - (n-3) \big( (n-2)^2(d^0_{n-3})^2 + 2(n-2)(n-3)d^0_{n-3}d^0_{n-4} + (n-3)^2(d^0_{n-4})^2 \big)\\[-5pt]
        & \phantom{2\bigg(} - 2(n-3) \big( (n-1)(n-2)(d^0_{n-3})^2 + (2n^2-8n+7)d^0_{n-3}d^0_{n-4} + (n-2)(n-3)(d^0_{n-4})^2 \big)\\[-5pt]
        & \phantom{2\bigg(} - \frac{(n-4)(n^2-6n+10)}{n-3} \big( (n-1)^2(d^0_{n-3})^2 + 2(n-1)(n-2)d^0_{n-3}d^0_{n-4} + (n-2)^2(d^0_{n-4})^2\big)\bigg)\\
        ={} & \frac{2}{n-3} \Big( \big( (n-3)(n^2-3n+1)^2 - 2(n-2)(n-3)^2 - (n-2)^2(n-3)^2 - 2(n-1)(n-2)(n-3)^2 \\[-5pt]
        & \phantom{\frac{2}{n-3}\Big(\big( } - (n-1)^2(n-4)(n^2-6n+10) \big) (d^0_{n-3})^2 \\[-5pt]
        & \phantom{\frac{2}{n-3}\Big(} + \big( 2(n-1)(n-3)^2(n^2-3n+1) - 2(n-3)^2(2n-5) - 2(n-2)(n-3)^3 \\[-5pt]
        & \phantom{\frac{2}{n-3}\Big(+\big( } - 2(n-3)^2(2n^2-8n+7) - 2(n-1)(n-2)(n-4)(n^2-6n+10) \big) d^0_{n-3}d^0_{n-4} \\[-5pt]
        & \phantom{\frac{2}{n-3}\Big(} + \big( (n-1)^2(n-3)^3 - 2(n-3)^3 - (n-3)^4 - 2(n-2)(n-3)^3 \\[-5pt]
        & \phantom{\frac{2}{n-3}\Big(+\big( } - (n-2)^2(n-4)(n^2-6n+10) \big) (d^0_{n-4})^2 \Big)\\
        = {} & \frac{2}{n-3} \big( (n+1)(d^0_{n-3})^2 +2(n-1)d^0_{n-3}d^0_{n-4} + (n-2)(d^0_{n-4})^2 \big).
    \end{align*}
    \endgroup
   As before, the first equality comes from \Cref{lem:cond-probabilities}, the second one from \eqref{eqs:fgh}, and the subsequent ones from calculations.
    Replacing in \eqref{eq:Delta2}, we obtain
    \begin{equation*}
        \Delta_2 = \frac{2(n-2)}{(n-1)^2(n-3)(d^1_{n-1})^2} \big( (n+1)(d^0_{n-3})^2 +2(n-1)d^0_{n-3}d^0_{n-4} + (n-2)(d^0_{n-4})^2 \big).
    \end{equation*}
    Since the expression on the right-hand side is strictly positive and twice the one on the right-hand side of \eqref{eq:Delta1-final}, this concludes the proof of the (in)equalities regarding~$\Delta_1$ and~$\Delta_2$.

    We now prove that~$\Delta_3>0$ when~$n\geq 5$.
    We first note that
    \begin{align*}
        \PP\big[B_{\AW(\theta),3} \;\big\vert\; \bar{\meet}^{\leq 2}_{\AW(\theta)},\ \chi^1_{\AW(\theta)}\!=\!\chi^2_{\AW(\theta)}\!=\!\move^3\big] & = \PP\big[B_{\AW(\theta),3} \;\big\vert\; \chi^1_{\AW(\theta)}\!=\!\chi^2_{\AW(\theta)}\!=\!\move^3\big] \\
        & = \frac{(n-2)(n-3)}{(n-1)^2},
    \end{align*}
    and
    \begin{align*}
        \PP\big[B_{\CFK(\theta),3} \;\big\vert\; & \bar{\meet}^{\leq 2}_\CFK(\theta),\ \chi^1_{\CFK(\theta)}\!=\!\chi^2_{\CFK(\theta)}\!=\!\move^3\big] \\
        ={} & \frac{\PP\big[B_{\CFK(\theta),3},\ \bar{\meet}^{\leq 2}_\CFK(\theta) \;\big\vert\; \chi^1_{\CFK(\theta)}\!=\!\chi^2_{\CFK(\theta)}\!=\!\move^3\big]}{\PP\big[\bar{\meet}^{\leq 2}_\CFK(\theta) \;\big\vert\; \chi^1_{\CFK(\theta)}\!=\!\chi^2_{\CFK(\theta)}\!=\!\move^3\big]} \\
        ={} & \frac{(n-1)!^2\PP\big[B_{\CFK(\theta),3},\ \bar{\meet}^{\leq 2}_\CFK(\theta) \;\big\vert\; \chi^1_{\CFK(\theta)}\!=\!\chi^2_{\CFK(\theta)}\!=\!\move^3\big]}{(d^1_{n-1})^2},
    \end{align*}
    where most equalities follow analogously to the previous cases, but now the probability of~$B_{\AW(\theta),3}$ conditional on moving in the first three rounds is~$(n-2)(n-3)/(n-1)^2$, because this event occurs when both players start the third permutation at two different locations, none of which is the home location of the other.
    We obtain
    \begin{align}
        \Delta_3 ={} & \PP\big[B_{\AW(\theta),3} \;\big\vert\; \bar{\meet}^{\leq 2}_{\AW(\theta)},\ \chi^1_{\AW(\theta)}\!=\!\chi^2_{\AW(\theta)}\!=\!\move^3\big]\nonumber\\
        & - \PP\big[B_{\CFK(\theta),3} \;\big\vert\; \bar{\meet}^{\leq 2}_\CFK(\theta),\ \chi^1_{\CFK(\theta)}\!=\!\chi^2_{\CFK(\theta)}\!=\!\move^3\big]\nonumber\\
        ={} & \frac{(n-2)(n-3)(d^1_{n-1})^2-(n-1)^2(n-1)!^2\PP\big[B_{\CFK(\theta),3},\ \bar{\meet}^{\leq 2}_\CFK(\theta) \;\big\vert\; \chi^1_{\CFK(\theta)}\!=\!\chi^2_{\CFK(\theta)}\!=\!\move^3\big]}{(n-1)^2(d^1_{n-1})^2}.\label{eq:Delta3}
    \end{align}
    We now compute the numerator, divided by~$n-2$ for convenience:
    \begingroup
    \allowdisplaybreaks
    \begin{align*}
        (n&-3)(d^1_{n-1})^2-\frac{(n-1)^2(n-1)!^2}{n-2}\PP\big[B_{\CFK(\theta),3},\ \bar{\meet}^{\leq 2}_\CFK(\theta) \;\big\vert\; \chi^1_{\CFK(\theta)}\!=\!\chi^2_{\CFK(\theta)}\!=\!\move^3\big]\\
        ={} & (n-3)(d^1_{n-1})^2 - \frac{2(n^2-7n+13)}{n-3}d^0_{n-2}d^2_{n-2} - 2(n-3)(d^1_{n-2})^2 \\
        & - \frac{4(n-4)(n^2-6n+10)}{n-3} d^1_{n-2}d^2_{n-2} - \frac{n^4-14n^3+75n^2-185n+181}{n-3}(d^2_{n-2})^2\\
        ={} & (n-3)\big( (n^2-3n+1)^2(d^0_{n-3})^2 + 2(n-1)(n-3)(n^2-3n+1)d^0_{n-3}d^0_{n-4} \\
        & \phantom{(n-3)\big({} } + (n-1)^2(n-3)^2(d^0_{n-4})^2\big)\\
        & - 2(n^2-7n+13)\big( (n-1)(d^0_{n-3})^2 + (2n-3)d^0_{n-3}d^0_{n-4} + (n-2)(d^0_{n-4})^2\big)\\
        & - 2(n-3) \big( (n-2)^2(d^0_{n-3})^2 + 2(n-2)(n-3)d^0_{n-3}d^0_{n-4} + (n-3)^2(d^0_{n-4})^2\big)\\
        & - \frac{4(n-4)(n^2-6n+10)}{n-3} \big( (n-1)(n-2)(d^0_{n-3})^2 + (2n^2-8n+7)d^0_{n-3}d^0_{n-4} \\[-10pt]
        & \phantom{{}- \frac{4(n-4)(n^2-6n+10)}{n-3} \big({}} + (n-2)(n-3)(d^0_{n-4})^2\big)\\
        & - \frac{n^4-14n^3+75n^2-185n+181}{n-3} \big( (n-1)^2(d^0_{n-3})^2 + 2(n-1)(n-2)d^0_{n-3}d^0_{n-4} \\[-10pt]
        & \phantom{{}- \frac{n^4-14n^3+75n^2-185n+181}{n-3} \big({}} + (n-2)^2(d^0_{n-4})^2\big)\\
        ={} & \bigg( (n-3)(n^2-3n+1)^2 - 2(n-1)(n^2-7n+13) - 2(n-2)^2(n-3) \\
        & \phantom{\bigg(} - \frac{4(n-1)(n-2)(n-4)(n^2-6n+10)}{n-3} - \frac{(n-1)^2 (n^4-14n^3+75n^2-185n+181)}{n-3} \bigg) (d^0_{n-3})^2 \\
        & +  \bigg( 2(n-1)(n-3)^2(n^2-3n+1) - 2(2n-3)(n^2-7n+13) - 4(n-2)(n-3)^2 \\
        &\phantom{{} +\bigg(} - \frac{4(n-4)(n^2-6n+10)(2n^2-8n+7)}{n-3} \\
        & \phantom{{} +\bigg(}- \frac{2(n-1)(n-2)(n^4-14n^3+75n^2-185n+181)}{n-3} \bigg) d^0_{n-3}d^0_{n-4} \\
        & +  \bigg( (n-1)^2(n-3)^3 - 2(n-2)(n^2-7n+13) - 2(n-3)^3 - 4(n-2)(n-4)(n^2-6n+10)\\
        &\phantom{{}+\bigg(}  - \frac{(n-2)^2(n^4-14n^3+75n^2-185n+181)}{n-3} \bigg) (d^0_{n-4})^2 \\
        ={} & \frac{(n^3-4n^2+n-2)(d^0_{n-3})^2 + 2n(n-1)(n-4)d^0_{n-3}d^0_{n-4} + (n^3-6n^2+8n-1)(d^0_{n-4})^2}{n-3}.
    \end{align*}
    \endgroup
    As in the previous cases, the first equality comes from \Cref{lem:cond-probabilities}, the second one from \eqref{eqs:fgh}, and the subsequent ones from calculations.
    Replacing in \eqref{eq:Delta3}, we obtain that~$\Delta_3$ equals
    \begin{equation*}
        \frac{(n-2)\big((n^3-4n^2+n-2)(d^0_{n-3})^2 + 2n(n-1)(n-4)d^0_{n-3}d^0_{n-4} + (n^3-6n^2+8n-1)(d^0_{n-4})^2\big)}{(n-1)^2(n-3)(d^1_{n-1})^2}.
    \end{equation*}
    All coefficients in the numerator and all terms in the denominator are strictly positive when~$n\geq 5$.
    This is immediate for the linear factors. For~$n^3-4n^2+n-2$ it follows from the fact that~$n^3>4n^2$ when~$n\geq 5$; for~$n^3-6n^2+8n-1$ it follows from the facts that this expression equals~$14$ when~$n=5$ and that~$n^3\geq 6n^2$ when~$n\geq 6$.
    We conclude that~$\Delta_3>0$, which finishes the proof.
\end{proof}

\bibliographystyle{abbrvnat}
\bibliography{bibliography}
	
\end{document}